\documentclass[12pt]{amsart}
\usepackage{amsmath}
\usepackage{amsfonts}
\usepackage{amssymb}
\usepackage{amsthm}
\usepackage[all]{xy}
\usepackage{color}
\usepackage{verbatim}
\usepackage{graphicx}
\usepackage{tikz}
\usepackage{placeins}
\usepackage{float}
\usepackage{listings}

\setlength{\textheight}{23cm}
\setlength{\textwidth}{16cm}
\setlength{\topmargin}{-0.8cm}
\setlength{\parskip}{1 em}

\hoffset=-1.4cm

\newtheorem{theorem}{Theorem}[section]
\newtheorem{lemma}[theorem]{Lemma}

\newtheorem{corollary}[theorem]{Corollary}
\newtheorem{definition}[theorem]{Definition}

\numberwithin{equation}{section}

\baselineskip=15pt

 \author[lokenath Kundu]{Lokenath Kundu}
 
 
 \email{bholaktw2010@gmail.com, lokenath$\_$kundu@srmap.edu.in}

    \address{SRM University, A.P.}
  
  \subjclass[2020]{30F10,20D05,12F12,54H15}
  \keywords{Riemann surface, finite group, stable upper genus, inverse Galois problem.}

\title[Linear Fractional group as Galois group] {Linear Fractional group as Galois group}

\date{16/06/21}
\begin{document}
\begin{abstract}
We compute all signatures of $PSL_2(\mathbb{F}_7)$, and $PSL_2(\mathbb{F}_{11})$ which classify all orientation preserving actions of the groups $PSL_2(\mathbb{F}_7)$ and $PSL_2(\mathbb{F}_{11})$ on compact, connected, orientable surfaces with orbifold genus $\geq ~ 0$.
This classification is well-grounded in the other branches of Mathematics like topology, smooth, and conformal geometry, algebraic categories, and it is also directly related to the inverse Galois problem.

\end{abstract}
\maketitle

\section{Introduction}
\noindent Let $\Sigma_g$ be a compact, connected, orientable Riemann surface of genus $g~\geq ~ 0$. Any orientation preserving action of a finite group $G$ on a Riemann surface $\Sigma_g$ of genus $g$ gives an orbit space $\Sigma_h ~ := \Sigma_g/G$ also known as orbifold. We can take this action as conformal action, that means the action is analytic in some complex structure on $\Sigma_g$, as the positive solution of Nielson Realization problem \cite{niel,eck} implies that if any group $G$ acts topologically on $\Sigma_g$ then it can also act conformally with respect to some complex structure. \\
The orbit space $\Sigma_h$ is again a Riemann surface possibly with some marked points and the quotient map $p~:~\Sigma_g~\rightarrow~\Sigma_h$ is a branched covering map. Let $B=~\lbrace c_1,c_2,\dots,c_r~ \rbrace$ be the set of all branch points in $\Sigma_h$ and $A:=p^{-1}(B)$. Then  $p:~\Sigma_g \setminus A ~\rightarrow ~\Sigma_h \setminus B$ is a proper covering. The tuple $(h;m_1,m_2,\dots,m_r)$ is known as signature of the finite group $G$, where $m_1,m_2,\dots,m_r$ are the order of stabilizer of the preimages of the branch points  $c_1,c_2,\dots,c_r$ respectively. By Riemann-Hurwitz formula we have $$ (g-1)=~|G|(h-1)+\frac{|G|}{2}\sum_{i=1}^r(1-\frac{1}{m_i}) .$$ The signature of a group encodes the information of the group action of a Riemann surface and vice versa.

\noindent A solution of the Inverse Galois problem where $PSL_2(\mathbb{F}_7)$, and $PSL_2(\mathbb{F}_{11})$ are the Galois groups over the meromorphic function field $\mathcal{M}(\Sigma_{g ~ \geq ~ 0})$ is equivalent finding a signature of $PSL_2(\mathbb{F}_7)$ and $PSL_2(\mathbb{F}_{11})$ respectively.
Meromorphic functions on a Riemann surface $Y$  are non constant holomorphic maps from $Y~to~\widehat{\mathbb{C}}$. $\mathcal{M}(Y)$ the set of all meromorphic functions on $Y$ forms a field. It is well known that $\mathcal{M}(\widehat{\mathbb{C}})~=~\mathbb{C}(t)$. For a finite group $G$, the $\mathbb{C}(t)$ isomorphism classes of Galois extensions $L/\mathbb{C}(t)$ with Galois group isomorphic to $G$ are in $1-1$ correspondence with the equivalence class of branched coverings $p~:Y~\rightarrow~\widehat{\mathbb{C}}$ with the Deck transformation group isomorphic to $G$. In the above correspondence $L~\cong\mathcal{M}(Y)$.  (see \cite{ggal}).

\noindent Finding signatures of a group gives a positive answer to the inverse Galois problem. For any prime number $p$, $PSL_2(\mathbb{F}_p)$ is a group of $2 \times 2$ matrices over the fields  $\mathbb{F}_{p}$ with diterminant $1$, modulo its center, and is of order $\frac{(p-1)p(p+1)}{2}$. In this article we consider the groups $PSL_2(\mathbb{F}_{7})$ and $PSL_2(\mathbb{F}_{11})$, and we compute all the signatures of the mentioned groups. The order of any element in $PSL_2(\mathbb{F}_7)$ is either of 1, 2, 3, 4 or 7. Also the six conjugacy classes of $PSL_2(\mathbb{F}_7)$ are denoted by 1A, 2A, 3A, 4A, 7A and 7B size 1, 21, 56, 42, 24 and 24 respectively. The maximal subgroups of $PSL_2(\mathbb{F}_7)$ are $S_4$ and $\mathbb{Z}_7\rtimes\mathbb{Z}_3$. We have 14 copies of $S_4$ and 8 copies of $\mathbb{Z}_7\rtimes\mathbb{Z}_3$ in $PSL_2(\mathbb{F}_7)$ respectively. The order of any element in $PSL_2(\mathbb{F}_{11})$ is either of 1, 2, 3, 5, 6 or 11. Also the six conjugacy classes of $PSL_2(\mathbb{F}_{11})$ are denoted by 1A, 2A, 3A, 5A, 5B, 6A, 11A and 11B and are size 1, 55, 110, 132, 132, 110, 60 and 60 respectively. The maximal subgroups of $PSL_2(\mathbb{F}_{11})$ are $A_5$,  $\mathbb{Z}_{11}\rtimes\mathbb{Z}_5$, and $D_{12}$. We have 22 copies of $A_5$ and 12 copies of $\mathbb{Z}_{11}\rtimes\mathbb{Z}_5$, and 55 copies of $D_{12}$  in $PSL_2(\mathbb{F}_{11})$ respectively \cite{atlas}. The maximal subgroups and the number of maximal subgroups in $PSL_2(\mathbb{F}_7)$ and $PSL_2(\mathbb{F}_{11})$ will help us to compare the number of solutions of the equation $xyz=1$ in $PSL_2(\mathbb{F}_7)$ and $PSL_2(\mathbb{F}_{11})$ respectively and in maximal subgroups of $PSL_2(\mathbb{F}_{7})$ and $PSL_2(\mathbb{F}_{11})$ respectively, which allow us to compute the signatures of the form $(0;m_1,m_2, \dots,m_r)$ of the groups $PSL_2(\mathbb{F}_{7})$ and $PSL_2(\mathbb{F}_{11})$.

\noindent It was justified in \cite{isomat} that $PSL_2(\mathbb{F}_7) $ is isomorphic to $GL_3(\mathbb{F}_2)$. Besides that $GL_3(\mathbb{F}_2)$ is isomorphic to $PSL_3(\mathbb{F}_2)$. We use this isomorphism to prove our main result. So $PSL_2(\mathbb{F}_7) $ is isomorphic to $PSL_3(\mathbb{F}_2)$. In \cite{ming2} the authors computed the genus of the linear fractional groups $PSL_2(\mathbb{F}_{p^n})$ for all primes $p$ and exponents $n$.  In \cite{ming1} the authors showed that the minimum genus of $PSL_2(\mathbb{F}_7)$ and $PSL_2(\mathbb{F}_{11})$ are given by the signature $(0;2,3,7)$ and $(0;2,3,11)$ respectively. Using the Riemann-Hurwitz formula we can easily see that the minimum genus of $PSL_2(\mathbb{F}_7)$ is $3$ and $PSL_2(\mathbb{F}_{11})$ is $26$. In \cite{klein} the author shows that minimum genus of $PSL_2(\mathbb{F}_7)$ is $3$ using algebraic functions, and branched points.
Later in \cite{oza} Murad {\"O}zaydin, Charlotte Simmons, and Jennifer Taback determined all the signatures of the form $(0;m_1,m_2,\dots,m_r)$ for the groups $PSL_2(\mathbb{F}_{p})$ for $p \geq 11$. Also in this process of proving their results they claimed that $(0;2,2,2,2,2)$ is not a signature for the group $PSL_2(\mathbb{F}_7)$ and 
$PSL_2(\mathbb{F}_{11})$. In connection with the signature of a finite group and inverse Galois problem, we have the following known results.
In 1980 Samuel E. LaMacchia gives a two parameter family $f(a,~ t)(X)$ of
polynomials over $\mathbb{Q}(a,~ t)$ with the Galois group $PSL_2(\mathbb{F}_7)$. In \cite{malle} the author gives $3$- parameter family polynomial for $PSL_2(\mathbb{F}_7)$ for specific signatures. The necessary and sufficient condition for an effective action of a group $G$ preserving the orientation on compact, connected,  orientable surface $\Sigma_g$ of genus $g$ except for finitely many exceptional values of $g$ was proved by Kulkarni in \cite{kulkarni}. In particular the group $PSL_2(\mathbb{F}_p)$  has the above mentioned property for $p \geq ~ 5$, and $p$ is odd.

\noindent For convenience, we write $(h;2^{[2]},3^{[1]})$ for the signature $(h;2,2,3)$. In general we use the following notation $ ( h;2^{[a_{2}]}, 3^{[a_{3}]}, 4^{[a_{4}]}, 7^{[a_{7}]} )$ for $PSL_2(\mathbb{F}_7)$ and $ ( h;2^{[a_{2}]}, 3^{[a_{3}]}, 5^{[a_{4}]}, 6^{[a_{6}]}, 11^{[a_{11}]} )$ for $PSL_2(\mathbb{F}_{11})$. 

\noindent In \cite{oza} the authors claimed that $(0;2,2,2,2,2)$ is not a signature of  $PSL_2(\mathbb{F}_7)$ and $PSL_2(\mathbb{F}_{11})$. We will show that this is not the case,
 below we state our main results.
\begin{theorem}\label{main theorem 1}
$ ( h;2^{[a_{2}]}, 3^{[a_{3}]}, 4^{[a_{4}]}, 7^{[a_{7}]} ) $ is a signature of $ PSL_2(\mathbb{F}_7) $ if and only if $$  1+168(h-1)+ 42a_{2} + 56a_{3} + 63a_{4} + 72a_{7} \geq 3 $$ except when the signature is $(1;2)$.
 \end{theorem}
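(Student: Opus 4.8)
The plan is to translate realizability into pure group theory via the Riemann existence theorem and then treat the two directions separately. First I would record that the left-hand side of the inequality is exactly the genus predicted by Riemann–Hurwitz: since $|PSL_2(\mathbb{F}_7)|=168$ and the branch contributions $84\,(1-\tfrac{1}{m})$ for $m=2,3,4,7$ are $42,56,63,72$, one gets $g=1+168(h-1)+42a_2+56a_3+63a_4+72a_7$, so the asserted condition is simply $g\ge 3$. By the Riemann existence theorem, $(h;2^{[a_2]},3^{[a_3]},4^{[a_4]},7^{[a_7]})$ is a signature of $G=PSL_2(\mathbb{F}_7)$ if and only if $G$ has a generating vector of that type: elements $\alpha_1,\beta_1,\dots,\alpha_h,\beta_h,c_1,\dots,c_r$ with $c_i$ of the prescribed orders, $\prod_j[\alpha_j,\beta_j]\prod_i c_i=1$, and $\langle\alpha_j,\beta_j,c_i\rangle=G$. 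The entire proof then reduces to deciding existence of such vectors.

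For necessity the bound $g\ge 3$ is immediate from the cited fact that the minimum genus of $G$ is $3$, so every generating vector yields $g\ge 3$. What remains is to verify that the single listed exception is genuine, i.e. that $(1;2)$ is not a signature even though its genus is $43$. A $(1;2)$-vector is a pair $\alpha,\beta$ with $\langle\alpha,\beta\rangle=G$ and $[\alpha,\beta]$ of order $2$ (as $c_1=[\alpha,\beta]^{-1}$), so I would show that no generating pair of $G$ has an involution commutator. I expect to carry this out using the isomorphism $PSL_2(\mathbb{F}_7)\cong PSL_3(\mathbb{F}_2)=GL_3(\mathbb{F}_2)$ to compute commutators explicitly over $\mathbb{F}_2$, together with the classification of maximal subgroups ($S_4$ and $\mathbb{Z}_7\rtimes\mathbb{Z}_3$), arguing that every pair whose commutator is an involution is forced into a proper subgroup.

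For sufficiency I would first settle $h=0$, where a generating vector is a tuple $c_1,\dots,c_r$ of prescribed orders with $c_1\cdots c_r=1$ generating $G$; here $g\ge 3$ forces $r\ge 3$. To detect these I would use the class-algebra structure-constant count of solutions of $x_1\cdots x_r=1$ with each $x_i$ in a fixed conjugacy class, read off from the character table, and then remove by inclusion–exclusion over the conjugates of the maximal subgroups (there being $14$ copies of $S_4$ and $8$ of $\mathbb{Z}_7\rtimes\mathbb{Z}_3$) the solutions that fail to generate; a positive residual count certifies realizability. This is where the corrected base cases appear, in particular a generating quintuple of involutions showing that $(0;2,2,2,2,2)$ \emph{is} a signature. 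With a finite stock of such seeds I would propagate to all remaining genus-zero signatures with $g\ge 3$ by the realizability-preserving append-moves: concatenating onto a generating tuple any ``relation tuple'' (orders of elements whose product is $1$, generation not required), of which inserting an inverse pair $x,x^{-1}$ is the simplest instance. For $h\ge 1$ I would exploit that $G$ is perfect and, by Ore's theorem, that every element of $G$ is a single commutator: whenever the branch data already contains a sub-tuple generating $G$ (which holds once $r\ge 2$ with admissible orders, since $G$ is generated by suitable pairs, e.g. of orders $2$ and $3$), one chooses such $c_i$, absorbs the residual product into one handle, and takes the other handles trivial; the remaining handle-only moves raise $h$ freely. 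The only constrained family is the one-branch-point signatures $(1;m)$, which as above reduce to the existence of a generating pair with commutator of order $m$, realizable for $m=3,4,7$ and failing exactly for $m=2$.

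The hard part will be twofold. The negative statement that $(1;2)$ is not a signature is the more delicate, since it requires ruling out \emph{all} generating pairs with involution commutator rather than exhibiting one example. The second difficulty is the bookkeeping that the seed signatures together with the append-moves actually cover every lattice point $(a_2,a_3,a_4,a_7,h)$ with $g\ge 3$, leaving no sporadic small-genus gaps. The structure-constant sums themselves are routine once the character table is fixed; the genuinely error-prone step is the inclusion–exclusion over maximal subgroups, which is presumably what produced the mistaken assertion in \cite{oza} that $(0;2,2,2,2,2)$ is not a signature.
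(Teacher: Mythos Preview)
Your overall architecture matches the paper's: split into $h=0$, $h=1$, $h\ge 2$; for $h=0$ use class-multiplication coefficients and compare against the $14$ copies of $S_4$ and $8$ copies of $\mathbb{Z}_7\rtimes\mathbb{Z}_3$ to certify generating triples, then propagate by append-moves (the paper's ``extension principle''); for $h\ge 1$ build explicit surface-kernel epimorphisms. The $(0;2^{[5]})$ correction is handled in the paper exactly as you suggest, by showing $14\cdot C^{S_4}_{2^{[5]}}<C^{G}_{2^{[5]}}$.

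There is one concrete gap in your $h\ge 1$ scheme. You assert that once $r\ge 2$ the elliptic generators $c_1,\dots,c_r$ can themselves be chosen to generate $G$, after which Ore supplies $\alpha,\beta$ with $[\alpha,\beta]=(c_1\cdots c_r)^{-1}$. This fails precisely for the all-involution signatures $(1;2^{[a_2]})$: any two involutions generate a dihedral group, so no tuple of involutions can generate the simple group $G$, and your reduction never reaches a generating sub-tuple. The paper handles these cases the other way round, putting the generation burden on the hyperbolic side: for $(1;2,2)$ it takes $\alpha_1=B$ of order $3$, $\beta_1=1$, $c_1=c_2=A$, so that $[\alpha_1,\beta_1]c_1c_2=1$ and $\langle A,B\rangle=G$, and then invokes the extension principle (every involution is a product of two commuting involutions from a Klein four-subgroup) to reach all $a_2\ge 2$. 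Your Ore-based shortcut needs a separate argument here.

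A second point of divergence: for the exception $(1;2)$ you propose a structural proof that every pair with involution commutator lies in a proper subgroup. The paper does not attempt this; it simply cites computational sources (\cite{notsig,GAP,jen}) for the non-existence of a $(1;2)$ generating vector. Your plan is more ambitious and you should expect it to be genuinely delicate: the character-theoretic count $\sum_\chi |\chi(x)|^2\overline{\chi(t)}/\chi(1)$ only tells you whether an involution $t$ is \emph{some} commutator, not whether it is the commutator of a \emph{generating} pair, so you will have to enumerate (up to conjugacy) the pairs $(\alpha,\beta)$ with $[\alpha,\beta]\in 2A$ and check each against the maximal-subgroup list, which is effectively the GAP computation the paper cites.
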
 
 
\begin{theorem}\label{main theorem 2}
$ ( h;2^{[a_{2}]}, 3^{[a_{3}]}, 5^{[a_{5}]}, 6^{[a_6]} 11^{[a_{11}]} ) $ is a signature of $ PSL_2(\mathbb{F}_{11}) $ if and only if $$  1+660(h-1)+ 165a_{2} + 220a_{3} + 264a_{5} + 275a_6 +300a_{11} \geq 26 .$$
 \end{theorem}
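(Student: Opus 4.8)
The plan is to characterize exactly which tuples $(h;2^{[a_2]},3^{[a_3]},5^{[a_5]},6^{[a_6]},11^{[a_{11}]})$ occur as signatures of $PSL_2(\mathbb{F}_{11})$. The stated inequality is precisely the Riemann–Hurwitz condition $g \geq 26$ rewritten: computing $g-1 = |G|(h-1) + \tfrac{|G|}{2}\sum(1-\tfrac{1}{m_i})$ with $|G|=660$ and the element orders $2,3,5,6,11$ gives the coefficients $165,220,264,275,300$. So the numerical inequality just says the resulting genus is at least $26$, the minimum genus. The real content is that this necessary numerical condition is also \emph{sufficient}, i.e. that every tuple meeting the bound is actually realized by a genuine branched $G$-cover.

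The standard realizability criterion I would invoke is that $(h;m_1,\dots,m_r)$ is a signature of $G$ iff there exists a generating vector: elements $a_1,b_1,\dots,a_h,b_h,c_1,\dots,c_r \in G$ with $\mathrm{ord}(c_i)=m_i$, satisfying $\prod[a_j,b_j]\prod c_i = 1$, and together generating $G$. The first step is thus to reduce the problem to an existence-of-generating-vectors question and to handle the orbifold genus $h$: for $h \geq 1$ the free hyperbolic/handle generators give enormous flexibility, so the crux is genus $h=0$, where one must solve $c_1\cdots c_r = 1$ with prescribed orders and with the $c_i$ generating $G$. I would first settle $h=0$ and then bootstrap to higher $h$ by absorbing commutators $[a,b]$ (each of which can contribute order to the product relation while keeping generation), thereby showing that once the bound is met for some $h$ it propagates.

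For the genus-zero case the key technique is to count, for fixed conjugacy classes $C_1,\dots,C_r$, the number of solutions of $x_1\cdots x_r = 1$ with $x_i \in C_i$; by Frobenius's formula this count is $\frac{|C_1|\cdots|C_r|}{|G|}\sum_{\chi}\frac{\chi(C_1)\cdots\chi(C_r)}{\chi(1)^{r-2}}$, summed over the irreducible characters of $PSL_2(\mathbb{F}_{11})$. To guarantee a \emph{generating} vector rather than one landing in a proper subgroup, I would subtract the analogous counts inside the maximal subgroups $A_5$, $\mathbb{Z}_{11}\rtimes\mathbb{Z}_5$, and $D_{12}$ (using the multiplicities $22$, $12$, $55$ recorded in the introduction and Möbius inclusion–exclusion over the subgroup lattice). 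Showing this difference is strictly positive for all tuples on the boundary of the bound is what makes the condition sufficient; the finitely many small/boundary cases (small $r$, and in particular the claimed signature $(0;2,2,2,2,2)$ that contradicts \cite{oza}) I would verify by direct character computation.

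The main obstacle I expect is controlling the generation condition near the boundary of the inequality: the Frobenius count is positive in bulk, but for tuples with few branch points and small orders (the extremal cases) the global solution count can be dominated by solutions lying inside maximal subgroups, so the inclusion–exclusion estimate must be carried out carefully and sharply rather than crudely bounded. Establishing that the net count stays positive in exactly the region $g \geq 26$ — and pinning down any genuine exceptions — is the delicate heart of the argument, and it is here that the explicit character table of $PSL_2(\mathbb{F}_{11})$ and the precise maximal-subgroup data are indispensable.
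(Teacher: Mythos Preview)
Your plan matches the paper's approach closely: the paper also splits by orbifold genus, handles $h=0$ via the class-multiplication coefficient formula compared against the counts in the maximal subgroups $A_5$, $D_{12}$, $\mathbb{Z}_{11}\rtimes\mathbb{Z}_5$, and then treats $h=1$ and $h\geq 2$ separately by exhibiting explicit surface-kernel epimorphisms (using a commutator/character criterion for $(1;m)$ and, for the single case $(1;3)$, a GAP search). The only tactical tool you do not name is the paper's ``extension principle'' --- that an element of prime order (or order $2$) in $PSL_2(\mathbb{F}_p)$ factors as a product of two elements of the same order --- which it invokes repeatedly to propagate from a handful of minimal signatures to all larger ones, in place of re-running the Frobenius/inclusion--exclusion estimate at every tuple.
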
  
 
\noindent As a corollary of the main theorems we will find out the stable upper genus for the groups $PSL_2(\mathbb{F}_7)$, and $ PSL_2(\mathbb{F}_{11})$. We define the stable upper genus of any group $G$ to be the least integer $g$ such that $G$ acts on every compact, connected, orientable Riemann surface $\Sigma_{g+i}, ~ \forall ~ i=0,1,2,\dots$ effectively. Here we find the stable upper genus using generic programming techniques \cite{program,ipython,pandas,matplotlib,numpy} for an extensive range of $h ~ \text{and} ~ a_i$ where we systematically use combinations of nested loops and high dimensonal arrays for our purpose.  We will prove that only two irreducible characters of $PSL_2(\mathbb{F}_7)$ come from the Riemann surfaces and there is no irreducible characters of $PSL_2(\mathbb{F}_{11})$  come from the Riemann surfaces. Using our main theorem we also find out all possible Galois extension over $\mathcal{M}(\Sigma_{g \geq 0})$ with Galois groups $PSL_2(\mathbb{F}_7)$ and $PSL_2(\mathbb{F}_{11})$.The group $ PSL_2(\mathbb{F}_p),$ where $p$ is a prime with $p\leq7$ has an application in physics as well. An interesting application of the representations of $ PSL_2(\mathbb{F}_7)$  have been used, in \cite{phy}. Our results could profitably be applied to find similar interesting applications of the representations of $PSL_2(\mathbb{F}_7)$.

\noindent The paper is organized as follows. In Section $2,$ we discuss background results. In Section $3,$ and Section $4,$ we will prove our main theorem and will show some applications.
\section{Preliminaries}
\noindent In this section, we will collect the knowledge about the branched covering, meromorphic function fields over Riemann surfaces, Fuchsian group, signature of the finite group, class multiplication coefficient formula, and Eichler trace formula. We are going to study the relation between the branched covers and meromorphic function fields.
 
\noindent We start with the definition of branch points and the relation between branched covering and inverse Galois problem. 
\begin{definition} \cite{otto}
Consider any two Riemann surfaces $X~ \text{and} ~Y$. Let $f:~Y ~ \rightarrow ~ X$ be a non constant holomorphic mapping. A point $y~\in~Y$ is said to be a ramification point if there exists no neighbourhood $V$ of $y$ in $Y$ such that $f_{|V}$ is $1-1$ and $f(y)$ is known as branch point
\end{definition} 
\begin{definition} \cite{ggal}
Let $P$ be a finite subset of $\mathbb{CP}^1$. Let $f:R ~ \rightarrow ~ \mathbb{CP}^1 \setminus  ~P $ be a finite Galois covering. Let $H ~ = ~ Deck(f)$, and for $p \in P$ let $C_p$ be the associated conjugacy classes of $H$. Let $P^{\prime} ~ = ~ \lbrace p \in P|~ C_p \neq \lbrace 1 \rbrace ~ \rbrace$. We define the ramification type of $f$ to be the class of triple $(H,P^{\prime},(C_p)_{p\in P^{\prime}})$.
\end{definition}
\begin{theorem} \cite{ggal} {(Riemann existence theorem- topological version)}
Let $\tau ~ = [G,~ P, ~ (K_p)_{p \in P}]$ be a ramification type. Let $|P|=r$, we label the elements of $P$ as $p_1,p_2,\dots,p_r$. Then there exists a finite Galois covering of $\mathbb{CP}^1 \setminus  ~P$ of ramification type $\tau$ if and only if there exists generators $g_1,g_2.\dots,g_r$ of $G$ with $g_1 ~ \dots ~g_r=1$ and $g_i ~ \in ~ K_{p_i}$ for $i=1,2,\dots, r$. 
\end{theorem}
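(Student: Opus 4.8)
The plan is to reduce the statement to the Galois correspondence of covering space theory together with an explicit presentation of the fundamental group of the punctured sphere. Writing $X = \mathbb{CP}^1 \setminus P$ with $P = \lbrace p_1,\dots,p_r \rbrace$, the first step is to fix a basepoint $x_0 \in X$ and a standard system of loops $\gamma_1,\dots,\gamma_r$, where $\gamma_i$ encircles $p_i$ once positively and is joined to $x_0$ by non-crossing arcs chosen so that $\gamma_1\cdots\gamma_r$ is null-homotopic. A van Kampen computation (or deformation-retracting $X$ onto a wedge of circles) then yields
\[
\pi_1(X,x_0) = \langle \gamma_1,\dots,\gamma_r \mid \gamma_1\cdots\gamma_r = 1 \rangle,
\]
a free group of rank $r-1$. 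Under this identification the conjugacy class of $\gamma_i$ records the local behaviour near $p_i$, which is precisely the data carried by $K_{p_i}$.

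For the forward direction, suppose $f:R \to X$ is a connected finite Galois covering of ramification type $\tau$. Choosing a point of $R$ over $x_0$ produces the monodromy homomorphism $\phi:\pi_1(X,x_0) \to G$ onto the deck group $G \cong \mathrm{Deck}(f)$, and I would set $g_i := \phi(\gamma_i)$. Surjectivity of $\phi$, which holds because the covering is connected, forces the $g_i$ to generate $G$; applying $\phi$ to the relation $\gamma_1\cdots\gamma_r=1$ gives $g_1\cdots g_r=1$; and by the definition of ramification type the conjugacy class of $\phi(\gamma_i)$ is exactly $K_{p_i}$, so $g_i \in K_{p_i}$.

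For the converse, given generators $g_1,\dots,g_r$ of $G$ with $g_1\cdots g_r=1$ and $g_i \in K_{p_i}$, the single defining relation of $\pi_1(X)$ is respected, so there is a unique homomorphism $\phi:\pi_1(X)\to G$ with $\phi(\gamma_i)=g_i$, and it is surjective because the $g_i$ generate. Since $X$ is connected, locally path-connected and semi-locally simply connected, the subgroup $\ker\phi$ corresponds to a connected covering $f:R\to X$; as $\ker\phi$ is a kernel it is normal, so $f$ is Galois with deck group $\pi_1(X)/\ker\phi \cong G$. Finally, the local monodromy of $f$ around $p_i$ is the conjugacy class of $\phi(\gamma_i)=g_i$, namely $K_{p_i}$, so $f$ has ramification type $\tau$, completing both implications.

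The step I expect to be most delicate is the bookkeeping that matches the topological local monodromy at each puncture with the prescribed conjugacy class $K_{p_i}$: the loop $\gamma_i$ is only well-defined up to conjugation (depending on the connecting arc and the basepoint), and orientation conventions decide $K_{p_i}$ versus its inverse. Arranging the presentation of $\pi_1(X)$ to be compatible with a fixed cyclic ordering of the punctures, so that $\gamma_1\cdots\gamma_r=1$ holds on the nose rather than merely up to conjugacy, is what requires genuine care; the remaining covering-space arguments are then formal. (If one wanted the full analytic statement, the extra point would be to transport the complex structure of $X$ to $R$ and fill in the punctures, but for the topological version stated here this is unnecessary.)
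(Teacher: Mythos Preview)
The paper does not give its own proof of this theorem: it is quoted verbatim from V\"olklein's book \cite{ggal} as background in the preliminaries, with no argument supplied. Your sketch is correct and is precisely the standard proof one finds in that reference --- compute $\pi_1(\mathbb{CP}^1\setminus P)$ via a standard loop system with the single relation $\gamma_1\cdots\gamma_r=1$, then translate connected Galois covers into surjections $\pi_1\to G$ via the covering-space Galois correspondence, reading off the ramification data from the images of the $\gamma_i$. Your remark about the genuinely delicate point (arranging the $\gamma_i$ so that the product relation holds on the nose and matching orientation conventions so that $K_{p_i}$ rather than $K_{p_i}^{-1}$ appears) is well placed; everything else is indeed formal.
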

\begin{theorem} \cite{ggal}
Let $G$ be a finite group. Let $P$ be a finite subset of $\mathbb{CP}^1$, and $q \in \mathbb{CP}^1 \setminus ~P$. There is a natural $1-1$ correspondence between the following objects.
\begin{itemize}
\item[1.] The $\mathbb{C}(x)-$ isomorphism class of Galois extension $L/ \mathbb{C}(x)$ with Galois group isomorphic to $G$ and with branch points contained in $P$.
\item[2.] The equivalence classes of Galois covering $f:R ~ \rightarrow ~ \mathbb{CP}^1 \setminus  ~P $ with Deck transformation group isomorphic to $G$.
\item[3.] The normal subgroups of the fundamental group $\pi_1(\mathbb{CP}^1 \setminus  ~P,q)$ with quotient isomorphic to $G$. 
\end{itemize}
\end{theorem}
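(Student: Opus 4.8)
The plan is to establish the two correspondences (2)$\leftrightarrow$(3) and (1)$\leftrightarrow$(2) separately and then compose them; transitivity of the resulting bijections produces the full triangle of natural identifications.

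First I would treat (2)$\leftrightarrow$(3), which is purely topological and rests on the classification of covering spaces of the connected, locally path-connected, semilocally simply connected space $\mathbb{CP}^1 \setminus P$. Fixing the basepoint $q$ and writing $\Gamma = \pi_1(\mathbb{CP}^1 \setminus P, q)$, connected coverings are classified up to equivalence by conjugacy classes of subgroups of $\Gamma$, the subgroup attached to $f : R \to \mathbb{CP}^1 \setminus P$ being the image $f_*\pi_1(R, \tilde q)$ for a chosen lift $\tilde q$ of $q$. A connected covering is Galois (regular) precisely when this subgroup is normal, in which case the Deck transformation group is canonically isomorphic to the quotient $\Gamma/N$. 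Since a normal subgroup coincides with its own conjugacy class, this yields a genuine bijection between normal subgroups $N \trianglelefteq \Gamma$ with $\Gamma/N \cong G$ and equivalence classes of connected Galois coverings with Deck group isomorphic to $G$; the isomorphism $\mathrm{Deck}(f) \cong \Gamma/N$ is the content of the identification.

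Next I would handle (1)$\leftrightarrow$(2). Given a Galois covering $f : R \to \mathbb{CP}^1 \setminus P$, the idea is to fill in the punctures: over a small punctured disc around each $p \in P$ the covering is a disjoint union of punctured discs, each completed by adding a single point, producing a compact Riemann surface $\overline R$ and a branched holomorphic map $\overline f : \overline R \to \mathbb{CP}^1$ whose branch locus lies in $P$. Passing to meromorphic function fields sends $\overline f$ to the inclusion $\mathbb{C}(x) = \mathcal{M}(\mathbb{CP}^1) \hookrightarrow \mathcal{M}(\overline R) =: L$, a finite extension whose degree equals the number of sheets, namely $|G|$. The essential point is the anti-equivalence between the category of compact Riemann surfaces with non-constant holomorphic maps to $\mathbb{CP}^1$ and the category of finite extensions of $\mathbb{C}(x)$: under it, Deck transformations of $\overline f$ correspond bijectively to $\mathbb{C}(x)$-automorphisms of $L$, so the covering is Galois exactly when $L/\mathbb{C}(x)$ is Galois, with $\mathrm{Gal}(L/\mathbb{C}(x)) \cong \mathrm{Deck}(f)$. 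Conversely, a Galois extension $L/\mathbb{C}(x)$ with group $\cong G$ determines, via the same equivalence, a compact Riemann surface $Y$ together with a map to $\mathbb{CP}^1$; deleting the branch points, which are permitted to be enlarged into $P$ by the hypothesis that the branch points are contained in $P$, recovers an unbranched Galois covering over $\mathbb{CP}^1 \setminus P$. Checking that these two constructions are mutually inverse on isomorphism and equivalence classes completes the bijection.

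The main obstacle is the analytic-to-algebraic passage underlying the function-field equivalence: one must know that every finite extension of $\mathbb{C}(x)$ is realized as $\mathcal{M}(Y)$ for a compact Riemann surface $Y$, and that morphisms correspond on both sides. This is precisely the substance of the Riemann existence theorem in its algebraic form, equivalent to the statement that every compact Riemann surface is a smooth projective algebraic curve, and the delicate bookkeeping is the naturality claim, that the bijections are compatible with the evident notions of isomorphism and commute with the group identifications $\mathrm{Deck}(f) \cong \Gamma/N \cong \mathrm{Gal}(L/\mathbb{C}(x))$. The remaining ingredients, the removable-singularity completion of the punctures and the topological classification of coverings, are standard. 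Finally I would invoke the topological Riemann existence theorem quoted above to guarantee that the objects in (2), hence in (1) and (3), are nonempty exactly when $G$ admits a suitable generating tuple, aligning the existence question with the group-theoretic computations carried out later in the paper.
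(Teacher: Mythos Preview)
The paper does not prove this statement at all: it is quoted in the Preliminaries section as a known result from V\"olklein's book \cite{ggal}, with no argument supplied. So there is no ``paper's own proof'' to compare against.

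That said, your sketch is the standard one and is correct in outline. The topological correspondence (2)$\leftrightarrow$(3) via covering-space theory is exactly right, and the passage (1)$\leftrightarrow$(2) through compactification and the equivalence of categories between compact Riemann surfaces over $\mathbb{CP}^1$ and finite extensions of $\mathbb{C}(x)$ is precisely how V\"olklein organizes the argument. Your identification of the Riemann existence theorem as the analytic input carrying the weight is also accurate. One small remark: the final paragraph invoking the topological Riemann existence theorem to discuss nonemptiness is not part of the statement being proved and could be omitted; the theorem asserts a bijection, not an existence criterion.
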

\noindent Now we are going to study the alliance of the branch points with character theory of finite group.

\noindent A discrete subgroup of $PSL_2(\mathbb{R})$ is known as \emph{Fuchsian group} \cite{fuch,sve}. The following theorem narrates a presentation for Fuchsian group.
\begin{theorem}
{\cite{tb}} If $ \Gamma $ is a Fuchsian group with compact orbit space $ \mathcal{U}/ \Gamma $ of genus $h$ then there are elements $ \alpha_{1},\beta_{1}, \dots ,\alpha_{h},\beta_{h},c_{1},\dots,c_{r} $ in $ Aut(\mathcal{U}) $ such that the following holds,

\begin{enumerate}
\item We have $ \Gamma = \langle \alpha_{1},\beta_{1}, \dots ,\alpha_{h},\beta_{h},c_{1},\dots,c_{r} \rangle. $
\item Defining relations for $ \Gamma $ are given by $$ c_{1}^{m_{1}},\dots,c_{r}^{m_{r}},\prod_{i=1}^{h}[\alpha_{i},\beta_{i}] \prod_{j=1}^{r}c_{j} $$ where the $ m_{i} $ are integers with $ 2\leq m_{1}\leq \dots \leq m_{r}. $
\item  Each non identity element of finite order in $ \Gamma $ lies in a unique conjugate of $ \langle c_{i} \rangle $ for suitable $i$ and the cyclic subgroups $\langle c_i\rangle$ are maximal in $ \Gamma. $ 
\item Each non identity element of finite order in $ \Gamma $ has a unique fixed point in $ \mathcal{U}. $ Each element of infinite order in $ \Gamma $ acts fixed point freely on $ \mathcal{U} .$ Finite order elements are called elliptic elements and infinite order element are called hyperbolic elements.  
\end{enumerate}
\end{theorem}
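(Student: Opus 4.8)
The plan is to realize $\mathcal{U}/\Gamma$ as a compact hyperbolic orbifold and to extract the presentation from a fundamental polygon together with Poincar\'e's polygon theorem. Throughout I take $\mathcal{U}$ to be the hyperbolic plane on which $\Gamma$ acts by orientation-preserving isometries, and I use the fact, supplied by uniformization, that the quotient is a closed orientable surface of genus $h$ carrying finitely many cone points whose orders are the integers $2\leq m_1\leq\cdots\leq m_r$.

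First I would construct a Dirichlet fundamental domain $D$ for $\Gamma$ centered at a point $p_0\in\mathcal{U}$ fixed by no nontrivial element of $\Gamma$. Because the orbit space is compact, $D$ is a convex hyperbolic polygon with finitely many sides, and its sides are identified in pairs by elements of $\Gamma$, the side-pairing transformations. The next step is to invoke Poincar\'e's polygon theorem: the side-pairing transformations generate $\Gamma$, and a complete set of defining relations is given by the cycle relations obtained by following the chains of vertices that are identified under the side-pairings.

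Then I would put the polygon into normal form. By cutting and regluing along edge paths on the quotient surface, the sides that contribute to the handles can be arranged into the standard word $\prod_{i=1}^{h}[\alpha_i,\beta_i]$, while each cone point of order $m_j$ contributes an elliptic side-pairing $c_j$ whose vertex cycle gives the relation $c_j^{m_j}=1$. Tracing the full boundary of the polygon once yields the single long relation $\prod_{i=1}^{h}[\alpha_i,\beta_i]\prod_{j=1}^{r}c_j=1$, which establishes parts (1) and (2). For parts (3) and (4) I would argue geometrically: a nontrivial element of finite order in a Fuchsian group must be elliptic, hence fixes exactly one point of $\mathcal{U}$; the stabilizer of such a fixed point is a maximal finite cyclic subgroup, and the cone points of the quotient are precisely the images of these fixed points, which yields the conjugacy and maximality statements for the $\langle c_i\rangle$. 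Since the quotient is compact no parabolics occur, so every element of infinite order is hyperbolic and acts freely on $\mathcal{U}$.

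The hard part will be the careful bookkeeping in Poincar\'e's polygon theorem and in the reduction to normal form: one must verify that the side-pairings actually generate, which requires the completeness of the fundamental domain, namely that its $\Gamma$-translates tile $\mathcal{U}$ with no gaps or overlaps, and that the exhibited relations generate all relations. Establishing that no further relations are needed---that the abstract group defined by these generators and relations maps isomorphically onto $\Gamma$---is the technical crux and is exactly the content of Poincar\'e's theorem; the surface-topology reduction producing the commutator word is comparatively routine once the tiling is in hand.
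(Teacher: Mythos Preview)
Your outline is the standard and correct route to this classical result: build a compact fundamental polygon, apply Poincar\'e's theorem to obtain generators and relations from side-pairings and vertex cycles, normalize to the surface word $\prod[\alpha_i,\beta_i]\prod c_j$, and read off the elliptic/hyperbolic dichotomy from the geometry of fixed points. There is nothing to compare against, however, because the paper does not prove this theorem at all. It is quoted in the preliminaries section with a citation to Breuer's monograph \cite{tb} and used as a black box; the paper's own contributions begin only with the signature computations for $PSL_2(\mathbb{F}_7)$ and $PSL_2(\mathbb{F}_{11})$ in Sections~3 and~4.

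If you want to complete your sketch into a self-contained proof, the one place I would tighten is part~(3): you assert that stabilizers of elliptic fixed points are maximal finite cyclic subgroups and that the cone points correspond bijectively to conjugacy classes of such stabilizers, but the uniqueness clause---that a finite-order element lies in a \emph{unique} conjugate of some $\langle c_i\rangle$---needs the observation that distinct maximal finite cyclic subgroups of a Fuchsian group have trivial intersection (since each is the full stabilizer of its unique fixed point, and a nontrivial elliptic element fixes exactly one point). That is easy, but it should be said explicitly.
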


\noindent For a Fuchsian group $ \Gamma $ as above, the numbers $ h,r $ and $ m_{1},m_{2},\dots,m_{r} $ are uniquely determined; $ 2h $ is the rank of free abelian part of the commutator factor of $ \Gamma. $ \\
We call $\sigma:= (h;m_{1},m_{2},\dots,m_{r}) $ the signature of $ \Gamma, $ the integer $ h $ is called the orbit genus and $ m_{i} $ are called the periods of $ \Gamma $ for $i=1,2,\dots,r. $ We define $$\Gamma(\sigma)=\langle \alpha_{1},\beta_{1}, \dots ,\alpha_{h},\beta_{h},c_{1},\dots,c_{r}\\ |
c_{1}^{m_{1}}=\dots,=c_{r}^{m_{r}}=\prod_{i=1}^{h}[\alpha_{i},\beta_{i}] \prod_{j=1}^{r}c_{j}=1\rangle $$

\noindent Now we define the signature of a finite group in the sense of Harvey \cite{har}.

\begin{lemma}[Harvey condition]
\label{Harvey condition}
A finite group $G$ acts faithfully on $\Sigma_g$ with signature $\sigma:=(h;m_1,\dots,m_r)$ if and only if it satisfies the following two conditions: 

\begin{enumerate}

\item The \emph{Riemann-Hurwitz formula for orbit space} i.e. $$\displaystyle \frac{2g-2}{|G|}=2h-2+\sum_{i=1}^{r}\left(1-\frac{1}{m_i}\right), \text{ and }$$

\item  There exists a surjective homomorphism $\phi_G:\Gamma(\sigma) \to G$ that preserves the orders of all torsion elements of $\Gamma$. The map $\phi_G$ is also known as surface-kernel epimorphism.
\end{enumerate}
\end{lemma}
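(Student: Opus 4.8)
The plan is to realize both conditions through the uniformization of $\Sigma_g$ and the translation of group actions into quotients of Fuchsian groups. Since the genera relevant to this paper exceed $2$, I work throughout in the hyperbolic regime, where the upper half plane $\mathcal{U}$ is the universal cover of $\Sigma_g$. The central dictionary I exploit is that a faithful conformal $G$-action on $\mathcal{U}/K$, with $K$ a surface group, is the same data as a Fuchsian group $\Gamma \supset K$ with $K \trianglelefteq \Gamma$ and $\Gamma/K \cong G$; the orbifold $\mathcal{U}/\Gamma = \Sigma_g/G$ then carries the signature $\sigma$. Both implications are obtained by passing back and forth across this dictionary, using the presentation of $\Gamma(\sigma)$ recorded above.

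For the forward direction, I would suppose $G$ acts faithfully on $\Sigma_g$ and, by Nielsen realization, take the action to be conformal. Lifting the action to $\mathcal{U}$ produces a Fuchsian group $\Gamma$ containing the surface group $K = \pi_1(\Sigma_g)$ as a normal subgroup with $\Gamma/K \cong G$, where $\Gamma$ has signature $(h;m_1,\dots,m_r)$, $h$ being the genus of the quotient orbifold and the $m_i$ the orders of the point stabilizers. Condition (1) is then exactly the Riemann--Hurwitz relation applied to $\mathcal{U}/K \to \mathcal{U}/\Gamma$, while the quotient map $\Gamma \cong \Gamma(\sigma) \twoheadrightarrow \Gamma/K = G$ furnishes the epimorphism $\phi_G$. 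It preserves torsion orders because $K$, acting freely as a surface group, is torsion-free, so each elliptic generator $c_i$ injects into $G$ and maps to an element of order exactly $m_i$.

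For the converse, I would assume (1) and a surface-kernel epimorphism $\phi_G:\Gamma(\sigma)\to G$. Because the Riemann--Hurwitz quantity $2h-2+\sum_i(1-1/m_i)$ is positive whenever $g\geq 2$, the signature $\sigma$ is realized by an honest Fuchsian group $\Gamma \cong \Gamma(\sigma)$ acting on $\mathcal{U}$. Setting $K = \ker\phi_G$, the decisive step is to verify that $K$ is torsion-free: by property (3) of the Fuchsian presentation every finite-order element of $\Gamma(\sigma)$ is conjugate to a power of some $c_i$, and since $\phi_G$ sends $c_i$ to an element of order $m_i$ it is injective on $\langle c_i\rangle$, so no nontrivial torsion element lies in $K$. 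A torsion-free Fuchsian group acts freely, so $\mathcal{U}/K$ is a smooth surface $\Sigma_g$ whose genus is forced by (1); then $G\cong\Gamma/K$ acts on it with quotient $\mathcal{U}/\Gamma$ of signature $\sigma$, the action being faithful precisely because $K=\ker\phi_G$.

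The main obstacle is this converse, and specifically the torsion-freeness of $K$, which is exactly where the order-preserving hypothesis on $\phi_G$ is indispensable. If $\phi_G$ merely collapsed the order of some $c_i$, the kernel would contain an elliptic element, $\mathcal{U}/K$ would acquire a cone point instead of being smooth, and the signature of the resulting action would change. Keeping careful track of which powers of the $c_i$ survive under $\phi_G$, together with the conjugacy classification of torsion in (3), is the crux; the Riemann--Hurwitz bookkeeping and the existence of the uniformizing Fuchsian group are then routine.
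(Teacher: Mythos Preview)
The paper does not supply its own proof of this lemma: it is stated as a known result attributed to Harvey with the citation \cite{har}, and the argument is taken for granted throughout. So there is no in-paper proof to compare against.

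Your outline is the standard uniformization argument and is correct. The forward direction is clean. For the converse, the key step you identify---that the order-preserving hypothesis forces $\ker\phi_G$ to be torsion-free via the conjugacy classification of torsion in $\Gamma(\sigma)$---is exactly the point, and you handle it properly. One small remark: you restrict to $g\geq 2$ to stay in the hyperbolic regime, which suffices for every application in this paper, but the lemma as stated allows $g\geq 0$; the Euclidean and spherical signatures need a separate (easy) check, or one simply notes that the paper only ever invokes the lemma for $g\geq 3$.
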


\noindent Suppose we have a signature $(h;m_{1},m_{2},\dots,m_{r})$ for a given group $G$. Extension principle gives us a clear idea when $(h;m_{1},m_{2},\dots,m_i^{\prime},m_i^{\prime \prime},\dots,m_{r})$ is again a signature of the group G.
\subsection{Extension principle}
\begin{theorem}
Let $(h;m_{1},m_{2},\dots,m_{r})$ be a signature of a finite group $G=~PSL_2(\mathbb{F}_p)$ where $p$ is a prime number. Now $$\Gamma(h;m_{1},m_{2},\dots,m_{r})=\langle \alpha_{1},\beta_{1}, \dots ,\alpha_{h},\beta_{h},c_{1},\dots,c_{r} | c_{1}^{m_{1}},\dots,c_{r}^{m_{r}},\prod_{i=1}^{g}[\alpha_{i},\beta_{i}] \prod_{j=1}^{r}c_{j}\rangle. $$
If $c_i=c_{i1}c_{i2} \in ~ \Gamma; i \in \lbrace1,2,\dots,r\rbrace$ where $|c_{i1}|=m_i$ and $|c_{i2}|=m_i$ where $m_i$ is odd prime or $m_i=2$, then $$(h;m_{1},m_{2},\dots,m_{i},m_{i},\dots,m_{r})$$ is also a signature of $G$. It is known as \emph{extension principle}.
\end{theorem}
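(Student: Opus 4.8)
The plan is to reduce the statement to Harvey's criterion (Lemma \ref{Harvey condition}) and then to modify an existing surface-kernel epimorphism by \emph{splitting} one elliptic generator. Since $(h;m_1,\dots,m_r)$ is assumed to be a signature of $G=PSL_2(\mathbb{F}_p)$, Harvey's Lemma furnishes a surjective homomorphism $\phi:\Gamma(h;m_1,\dots,m_r)\to G$ preserving the order of every torsion element. Writing $x_k=\phi(\alpha_k)$, $y_k=\phi(\beta_k)$ and $g_j=\phi(c_j)$, the resulting tuple $(x_1,y_1,\dots,x_h,y_h,g_1,\dots,g_r)$ generates $G$, each $g_j$ has order exactly $m_j$, and $\prod_{k=1}^{h}[x_k,y_k]\prod_{j=1}^{r}g_j=1$. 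This generating vector is the object I would manipulate.

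First I would transport the given factorisation into $G$. Because $c_i=c_{i1}c_{i2}$ holds in $\Gamma$ with $|c_{i1}|=|c_{i2}|=m_i$, and because $\phi$ preserves torsion orders, the images $u=\phi(c_{i1})$ and $v=\phi(c_{i2})$ are elements of $G$ of order exactly $m_i$ satisfying $g_i=uv$. I would then define a candidate homomorphism $\phi'$ on the larger Fuchsian group $\Gamma(\tau)$, where $\tau=(h;m_1,\dots,m_i,m_i,\dots,m_r)$ carries one extra period $m_i$ inserted at position $i$, by sending $\alpha_k\mapsto x_k$, $\beta_k\mapsto y_k$, each unaffected $c_j\mapsto g_j$, and the two copies of the split generator to $u$ and $v$ respectively.

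Next I would verify the three defining properties of a surface-kernel epimorphism for $\phi'$. Order preservation is immediate from $|u|=|v|=m_i$ together with property (3) of the Fuchsian structure theorem, which guarantees that every torsion element of $\Gamma(\tau)$ is conjugate to a power of one of the elliptic generators, so controlling the generators suffices and no unintended torsion is introduced. The long relation is preserved because replacing the single factor $g_i$ by the adjacent pair $u\,v$ leaves the product unchanged. Surjectivity is also clear: the image of $\phi'$ contains $u$, $v$ and every $g_j$ with $j\neq i$, hence contains $uv=g_i$ and thus all original generators, so it is all of $G$. Since $\phi'$ preserves torsion orders it is injective on finite cyclic subgroups, so $\ker\phi'$ is torsion-free and $\mathcal{U}/\ker\phi'$ is a closed surface $\Sigma_{g'}$. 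Finally, inserting a period only increases the area $2h-2+\sum(1-1/m_j)$ by $(1-1/m_i)>0$, so $\Gamma(\tau)$ is again Fuchsian and Lemma \ref{Harvey condition}(1) returns a genuine non-negative integer $g'$; Harvey's Lemma then certifies $\tau$ as a signature of $G$.

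The main point requiring care — and where the hypothesis that $m_i$ is an odd prime or equals $2$ enters — is ensuring that the split factors land on elements of order \emph{exactly} $m_i$ rather than a proper divisor, and that such a factorisation $c_i=c_{i1}c_{i2}$ is actually realisable; when $m_i$ is prime every nontrivial power already has full order, which makes the order bookkeeping automatic and the factorisation available in the conjugacy-class arithmetic of $PSL_2(\mathbb{F}_p)$. Beyond this bookkeeping I expect no genuine obstruction, since the construction is purely the substitution $g_i=uv$ in the generating vector, with the uniqueness-of-conjugate clause of the structure theorem doing the work of controlling torsion in $\Gamma(\tau)$.
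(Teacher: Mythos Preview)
Your proposal is correct and follows essentially the same route as the paper: take the given surface-kernel epimorphism, push the factorisation $c_i=c_{i1}c_{i2}$ through it to get $g_i=uv$ in $G$, and define the new epimorphism on $\Gamma(\tau)$ by replacing the single elliptic image $g_i$ with the adjacent pair $u,v$; your verification of surjectivity, the long relation, and torsion-freeness of the kernel is in fact more careful than the paper's. The one point of divergence is where the hypothesis ``$m_i$ prime or $m_i=2$'' is invoked: you locate it in the order bookkeeping for $u,v$ (which is actually already forced by the assumption $|c_{i1}|=|c_{i2}|=m_i$ together with the surface-kernel property), whereas the paper uses it to check directly that the extra Riemann--Hurwitz contribution $\tfrac{|G|}{2}(1-1/m_i)$ is an integer --- a check your torsion-free-kernel argument renders unnecessary anyway.
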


\begin{proof}
Let $(h;m_{1},m_{2},\dots,m_{r})$ be a signature of  finite group $G$. Then there exists a surface kernel epimorphism $$\phi: ~ \Gamma(h;m_{1},m_{2},\dots,m_{r}) ~ \rightarrow ~ G ~ \text{defined as } $$ $$ \phi (\alpha_i)=A_i$$ $$\phi (\beta_i)=B_i$$ $$\phi(c_i)=C_i.$$
As $c_i=c_{i1}c_{i2}$ where $|c_{i1}|=m_{i}$ and $|c_{i2}|=m_{i}$. Now $$\phi(c_i)=C_i ~ \Rightarrow ~ \phi(c_{i1}c_{i2})=C_i ~ \Rightarrow ~ \phi(c_{i1})\phi(c_{i1})=C_i$$ $$\Rightarrow C_{i1}C_{i2}= C_i ~ \text{where}~ \phi(c_{i1})=C_{i1} ~ \text{and}~ \phi(c_{i2})=C_{i2}.$$ As $\phi$ is surface kernel epimorphism, so $|C_{i1}|=m_{i}$ and $|C_{i2}|=m_{i}$. Now we can define the following map $$\psi :~ \Gamma(h;m_{1},m_{2},\dots,m_{i1},m_{i2},\dots,m_{r}) ~ \rightarrow G ~ \text{as}$$ $$ \psi(\alpha_j)=A_j, ~ \psi(\beta_j)=B_j,~ \psi(c_j)=C_j ~ \text{for} ~ j \neq i $$  and $$ \psi(c_{i1})=C_{i1}, ~ \text{and}~\psi(c_{i2})=C_{i2}.$$ Clearly $\psi $ is surface kernel epimorphism. \\
\noindent Now we have to show that 
$$1+|G|(h-1)+\frac{|G|}{2}\sum_{i=1}^r (1-\frac{1}{m_i})+\frac{|G|}{2}(1-\frac{1}{m_i})$$ is an integer.
As $(h;m_1,m_2,\dots,m_r)$ is a signature of $|G|$. So $$1+|G|(h-1)+\frac{|G|}{2}\sum_{i=1}^r (1-\frac{1}{m_i})$$ is an integer. Now $\frac{|G|}{2}(1-\frac{1}{m_i})$ is an integer if $m_i=2$ or $m_i$ is an odd prime dividing $\frac{(p-1)p(p+1)}{2}$. Hence $(h;m_{1},m_{2},\dots,m_{i},m_{i},\dots,m_{r})$ is a signature of $G$.
\end{proof}
\subsection{Minimum Genus}
Let $G$ be a finite group. The genus spectrum of a finite group $G$  is denoted by $SP(G)$, and it is defined as $$SP(G)=\lbrace ~ g \geq ~ 0 |~ G \text{ acts on }\Sigma_g \text{ faithfully, preserving orientation }\rbrace.$$ 
\noindent We define the minimum genus $\mu(G)$ of group $G$ by
\begin{center}
$\mu(G)=\operatorname{min} SP(G)$.
\end{center}
In our case the minimum genus of $PSL_2(\mathbb{F}_7)$ is $3$ \cite{ming2}, \cite{ming1}, \cite{klein}  and $PSL_2(\mathbb{F}_{11})$ is 26 \cite{ming2}, \cite{ming1}.
 \subsection{Class multiplication coefficient formula}
 Let G be a group, $x,y,z \in G.$ The following theorem provide the number of solutions of the equation $xyz=1$ in $G.$ The following theorem will help us to compute the signatures of the form $(0;m_1,m_2,\dots,m_r)$ of a group $G$.
 \begin{theorem}\cite{fing, char}
 We denote the conjugacy classes of $G$ by $K_i$ and let $y_i$ be an element of $K_i,~1\leq i\leq r.$ Then $\lambda_{ijk}$ is the number of times a given element of $K_k$ can be expressed as an ordered product of an element of $K_i$ and an element of $K_j,$ we have
 $$\lambda_{ijk}=\frac{|K_i| \cdot|K_j|}{|G|}\sum_{\chi \in Irr(G)}\frac{\chi(y_i)\chi(y_j)\overline{\chi(y_k)}}{\chi(1)} $$
 Here $Irr(G)$ denotes the set of all irreducible characters of a group.
 \end{theorem}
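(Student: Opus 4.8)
The plan is to work inside the group algebra $\mathbb{C}[G]$ and to exploit the fact that the class sums span its center. For each conjugacy class $K_i$ I would form the \emph{class sum} $\widehat{K}_i = \sum_{g \in K_i} g \in \mathbb{C}[G]$. A direct check shows that each $\widehat{K}_i$ is central, since conjugation merely permutes the summands, and in fact the $\widehat{K}_i$ form a $\mathbb{C}$-basis of the center $Z(\mathbb{C}[G])$. Consequently the product of two class sums is again central, so it expands uniquely as $\widehat{K}_i\widehat{K}_j = \sum_k c_{ijk}\widehat{K}_k$ for scalars $c_{ijk}$. Reading off the coefficient of a fixed group element $y_k \in K_k$ on both sides, the left-hand side counts exactly the ordered pairs $(x,y)$ with $x \in K_i$, $y \in K_j$, and $xy = y_k$, so the structure constant $c_{ijk}$ is precisely the integer $\lambda_{ijk}$ of the statement. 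The theorem is thereby reduced to computing these structure constants.

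To evaluate $c_{ijk}$ I would pass through the irreducible representations via their central characters. Fix $\chi \in Irr(G)$ afforded by an irreducible representation $\rho: G \to GL(V)$ with $\dim V = \chi(1)$, and extend $\rho$ linearly to $\mathbb{C}[G]$. Since $\widehat{K}_i$ is central, $\rho(\widehat{K}_i)$ commutes with every $\rho(g)$; as $V$ is irreducible and $\mathbb{C}$ is algebraically closed, Schur's lemma forces $\rho(\widehat{K}_i) = \omega_\chi(\widehat{K}_i)\,\mathrm{Id}_V$ for a scalar $\omega_\chi(\widehat{K}_i)$. Taking traces and using that $\chi$ is constant on $K_i$ gives $\omega_\chi(\widehat{K}_i)\chi(1) = \sum_{g \in K_i}\chi(g) = |K_i|\chi(y_i)$, hence
\begin{equation*}
\omega_\chi(\widehat{K}_i) = \frac{|K_i|\chi(y_i)}{\chi(1)}.
\end{equation*}
Because $\rho$ is an algebra homomorphism and these values are scalars, $\omega_\chi$ is multiplicative, so applying it to $\widehat{K}_i\widehat{K}_j = \sum_k c_{ijk}\widehat{K}_k$ yields
\begin{equation*}
\frac{|K_i|\chi(y_i)}{\chi(1)}\cdot\frac{|K_j|\chi(y_j)}{\chi(1)} = \sum_k c_{ijk}\,\frac{|K_k|\chi(y_k)}{\chi(1)}.
\end{equation*}

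Finally I would isolate $c_{ijk}$ using column orthogonality of the character table. Multiplying the previous identity by $\chi(1)\,\overline{\chi(y_l)}$ and summing over all $\chi \in Irr(G)$, the right-hand side becomes $\sum_k c_{ijk}|K_k|\sum_\chi \chi(y_k)\overline{\chi(y_l)}$, and the second orthogonality relation $\sum_\chi \chi(y_k)\overline{\chi(y_l)} = \tfrac{|G|}{|K_k|}\delta_{kl}$ collapses this to $c_{ijl}|G|$. Renaming $l$ as $k$ then gives exactly
\begin{equation*}
\lambda_{ijk} = c_{ijk} = \frac{|K_i|\cdot|K_j|}{|G|}\sum_{\chi \in Irr(G)}\frac{\chi(y_i)\chi(y_j)\overline{\chi(y_k)}}{\chi(1)},
\end{equation*}
as claimed. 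I expect the only genuinely delicate point to be the computation of the central character $\omega_\chi$: one must justify the scalar form through Schur's lemma and then correctly reconcile the trace computation with the fact that $\chi$ takes the constant value $\chi(y_i)$ on the class $K_i$. Everything afterward is a bookkeeping application of the orthogonality relations.
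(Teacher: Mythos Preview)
Your argument is correct and is precisely the standard proof found in the references the paper cites (Gorenstein; Isaacs): form class sums, identify the structure constants $c_{ijk}$ with $\lambda_{ijk}$, evaluate via the central characters $\omega_\chi(\widehat{K}_i)=|K_i|\chi(y_i)/\chi(1)$ obtained from Schur's lemma, and then invert using column orthogonality. The paper itself does not supply a proof of this theorem---it is quoted as a known result with citations \cite{fing,char}---so there is no in-paper argument to compare against; your write-up simply reproduces the textbook derivation those citations point to.
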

\noindent We can easily derive the following corollary from the above theorem.
\begin{corollary}
 Let $G$ be a finite group, and let $C_1,C_2,\dots,C_k$ denote the conjugacy classes in $G$, with representatives $g_1,g_2,\dots,g_k$. Then the number of solutions to the equation $xyz=1$ in $G$ with $x\in C_r,y\in C_s \text{ and } z\in C_t$ is given by 
 \begin{equation}
     C_{C_r,C_s,C_t}=\frac{|C_r| \cdot |C_s| \cdot |C_t|}{|G|}\cdot \sum_{\chi \in Irr(G)} \frac{\chi(g_r)\chi(g_s)\chi(g_t)}{\chi(1)},
 \end{equation}
  Here $Irr(G)$ denotes the set of all irreducible characters of a group. The $ C_{C_r,C_s,C_t} $ is called as the associated class multiplication coefficient.
 \end{corollary}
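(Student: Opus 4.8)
The plan is to obtain the corollary as a direct specialization of the preceding class multiplication coefficient theorem, by rewriting the defining equation so that it matches the ``ordered product'' setup already handled there. First I would observe that the condition $xyz=1$ is equivalent to $xy=z^{-1}$. Hence counting triples $(x,y,z)$ with $x\in C_r$, $y\in C_s$, $z\in C_t$ and $xyz=1$ is the same as counting, as $z$ ranges over $C_t$, the pairs $(x,y)$ with $x\in C_r$, $y\in C_s$ and $xy=z^{-1}$.

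Next I would apply the theorem with $K_i=C_r$ and $K_j=C_s$. For a fixed $z\in C_t$, the element $z^{-1}$ lies in the conjugacy class $C_t^{-1}$ consisting of the inverses of the elements of $C_t$; call this class $K_k$ and let $g_k=g_t^{-1}$ be a representative. The theorem states that the number of pairs $(x,y)$ with $x\in C_r$, $y\in C_s$ and $xy=z^{-1}$ equals $\lambda_{r,s,k}$, and crucially this value depends only on the conjugacy classes involved, not on the particular element $z^{-1}$ chosen inside $K_k$. Since $z^{-1}$ sweeps out the entire class $K_k=C_t^{-1}$ as $z$ ranges over $C_t$, and $|C_t^{-1}|=|C_t|$, summing over $z\in C_t$ simply multiplies the constant $\lambda_{r,s,k}$ by $|C_t|$. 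Substituting the formula for $\lambda_{r,s,k}$ then gives
$$
C_{C_r,C_s,C_t}=|C_t|\cdot\frac{|C_r|\cdot|C_s|}{|G|}\sum_{\chi\in Irr(G)}\frac{\chi(g_r)\,\chi(g_s)\,\overline{\chi(g_k)}}{\chi(1)}.
$$

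Finally I would remove the inverse class in favour of $C_t$ itself using the standard character identity $\chi(g^{-1})=\overline{\chi(g)}$ valid for every $\chi\in Irr(G)$. Since $g_k=g_t^{-1}$, this yields $\overline{\chi(g_k)}=\overline{\chi(g_t^{-1})}=\overline{\overline{\chi(g_t)}}=\chi(g_t)$, so the displayed expression collapses to the claimed formula, with $|C_r|\cdot|C_s|\cdot|C_t|$ in the numerator and the unconjugated product $\chi(g_r)\chi(g_s)\chi(g_t)$ inside the sum. The argument is essentially bookkeeping; the only step demanding genuine care — and the closest thing to an obstacle — is the passage through the inverse conjugacy class $C_t^{-1}$ together with the accompanying complex conjugation, which is precisely what converts the $\overline{\chi(g_k)}$ of the theorem into the $\chi(g_t)$ of the corollary.
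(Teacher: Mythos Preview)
Your derivation is correct and is exactly the ``easy'' argument the paper alludes to when it says the corollary follows directly from the preceding theorem; the paper itself gives no further details beyond that sentence. The rewriting $xyz=1\Leftrightarrow xy=z^{-1}$, the summation over $z\in C_t$ contributing a factor $|C_t|$, and the use of $\overline{\chi(g_t^{-1})}=\chi(g_t)$ are precisely the steps needed, and you have handled the passage through the inverse class carefully.
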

 \par
\noindent We are interested to find the irreducible characters of a finite group $G$ coming from the Riemann surfaces. \emph{Eichler trace formula} is a recipe to cook up those characters. We need the following results to state \emph{Eichler trace formula.}   

\begin{lemma} \cite{tb} Let X be a compact Riemann Surface of genus at least $ 2 $ and $ \sigma \in Aut(X) $ of order $ m \geq 2 $ such that $  \sigma(x):=x^{\sigma}=x $ for a point $ x \in X. $ 
 Then there exists unique primitive $m-th$ root of unity $ \zeta $ such that any lift $ \tilde{\sigma} $ of $ \sigma $ to $ \mathcal{D},\text{ (the open unit disc) } $ that fixes a point in $ \mathcal{D} $ is conjugate to the transformation $ z \longmapsto \zeta z. $ \\
 We write $\zeta_{x} (\sigma)=\zeta.$ $ \zeta^{-1} $ the \emph{rotation constant} of $ \sigma $ in $X$.  
\end{lemma}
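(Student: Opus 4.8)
The plan is to pass to the universal cover and exploit the uniformization theorem. Since $X$ has genus at least $2$, its universal cover is the open unit disc $\mathcal{D}$, so I may write $X = \mathcal{D}/\Gamma$ where $\Gamma \leq Aut(\mathcal{D}) \cong PSL_2(\mathbb{R})$ is a torsion-free Fuchsian group acting freely and properly discontinuously, with covering projection $\pi\colon \mathcal{D} \to X$. The fact that $\Gamma$ is fixed-point free (equivalently torsion-free, as recorded in item (4) of the presentation theorem for Fuchsian groups) is the structural input I will lean on throughout. To produce a distinguished lift, I fix a preimage $\tilde{x} \in \pi^{-1}(x)$; because $\sigma(x) = x$ and $\mathcal{D}$ is simply connected, the lifting criterion for $\pi$ yields a holomorphic $\tilde{\sigma}\colon \mathcal{D} \to \mathcal{D}$ with $\pi\circ\tilde{\sigma} = \sigma\circ\pi$ and $\tilde{\sigma}(\tilde{x}) = \tilde{x}$, and applying the same to $\sigma^{-1}$ shows $\tilde{\sigma} \in Aut(\mathcal{D})$. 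Any two lifts of $\sigma$ differ by an element of $\Gamma$, and every lift normalizes $\Gamma$.

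Next I would pin down the order. Since $\tilde{\sigma}$ projects to $\sigma$, its order is a multiple of $m$; on the other hand $\tilde{\sigma}^{m}$ projects to $\sigma^{m} = \mathrm{id}$, so $\tilde{\sigma}^{m} \in \Gamma$, while $\tilde{\sigma}^{m}$ fixes $\tilde{x}$. As $\Gamma$ is fixed-point free, this forces $\tilde{\sigma}^{m} = \mathrm{id}$, so $\tilde{\sigma}$ has order exactly $m$. An element of $Aut(\mathcal{D})$ of finite order with an interior fixed point is elliptic, so choosing $g \in Aut(\mathcal{D})$ with $g(\tilde{x}) = 0$, the conjugate $g\tilde{\sigma}g^{-1}$ fixes $0$; a holomorphic automorphism of $\mathcal{D}$ fixing $0$ is a rotation $z \mapsto \zeta z$, and since $\tilde{\sigma}$ has order exactly $m$, the number $\zeta$ is a primitive $m$-th root of unity. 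This gives existence.

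For uniqueness I would identify $\zeta$ intrinsically with the multiplier $\tilde{\sigma}'(\tilde{x})$. Conjugation inside $Aut(\mathcal{D})$ preserves the derivative at a fixed point by the chain rule, and $z \mapsto \zeta z$ has derivative $\zeta$ at $0$, so $\zeta = \tilde{\sigma}'(\tilde{x})$. Now let $\tilde{\sigma}_1, \tilde{\sigma}_2$ be two lifts of $\sigma$ fixing points $\tilde{x}_1, \tilde{x}_2 \in \pi^{-1}(x)$. Since $\Gamma$ acts transitively on the fibre $\pi^{-1}(x)$, pick $\gamma \in \Gamma$ with $\gamma(\tilde{x}_1) = \tilde{x}_2$; then $\gamma\tilde{\sigma}_1\gamma^{-1}$ fixes $\tilde{x}_2$ and projects to $\sigma$, so $(\gamma\tilde{\sigma}_1\gamma^{-1})^{-1}\tilde{\sigma}_2 \in \Gamma$ fixes $\tilde{x}_2$ and is therefore the identity, giving $\tilde{\sigma}_2 = \gamma\tilde{\sigma}_1\gamma^{-1}$. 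Because $\gamma$ is an orientation-preserving automorphism, the multiplier is preserved, so $\tilde{\sigma}_2'(\tilde{x}_2) = \tilde{\sigma}_1'(\tilde{x}_1)$, yielding a single well-defined primitive root $\zeta = \zeta_x(\sigma)$ to which every such lift is conjugate; declaring the \emph{rotation constant} to be $\zeta^{-1}$ is then a matter of definition.

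The step I expect to be most delicate is the uniqueness bookkeeping: one must verify that the two lifts are genuinely conjugate rather than conjugate only up to inversion, and that the conjugating element $\gamma$ lies in the orientation-preserving group $\Gamma$, since only then is the multiplier $\zeta$ preserved rather than replaced by $\overline{\zeta}$. Fixed-point freeness of $\Gamma$ is precisely what both forces $\tilde{\sigma}^{m} = \mathrm{id}$ (hence primitivity) and makes the lift fixing a given preimage unique, thereby eliminating the spurious ambiguities.
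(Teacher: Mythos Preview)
Your argument is correct and is the standard route to this result: uniformize $X=\mathcal{D}/\Gamma$ with $\Gamma$ torsion-free, lift $\sigma$ to an automorphism of $\mathcal{D}$ fixing a chosen preimage $\tilde x$, use fixed-point freeness of $\Gamma$ to force the lift to have exact order $m$, conjugate to a rotation $z\mapsto\zeta z$, and then identify $\zeta$ with the derivative $\tilde\sigma'(\tilde x)$ to obtain well-definedness independent of the chosen lift and preimage. The only cosmetic point is that the phrase ``fixes a point in $\mathcal{D}$'' in the statement should be read as ``fixes a point of $\pi^{-1}(x)$''; your proof tacitly adopts this reading, which is the intended one since the output $\zeta_x(\sigma)$ is attached to the particular fixed point $x$.

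As for comparison with the paper: there is nothing to compare. The paper does not supply a proof of this lemma; it is quoted verbatim from Breuer's monograph \cite{tb} and used as a black box in setting up the Eichler trace formula. Your write-up is essentially the argument one finds in that reference (or in any standard treatment of automorphisms of Riemann surfaces via uniformization), so there is no alternative approach in the paper to contrast against.
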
 
    
 \begin{definition}
 Let X be a compact Riemann surface, $ h \in G \leq Aut(X) $ of order m, and $ u \in \mathbb{Z} $ with $ gcd(u,m)=1, $ we define 
$$ Fix_{X,u}(h) :=\lbrace x \in Fix_{X}(h) \mid \zeta_{x}(h)= \zeta_{m}^{u} \rbrace, $$ the set of fixed point of $h$ with rotation constant $ \zeta_{m}^{-u}. $
\end{definition}
\noindent We denote the set of all positive integers smaller than $m$ and coprime to $m$ by $ I(m)$ i.e $I(m) = \lbrace u \mid 1 \leq u \leq m,~ gcd(u,m)=1 \rbrace.$ 
The following lemma gives us a formula to compute the order of $Fix_{X,u}(h).$ 
\begin{lemma} \cite{tb}
Let $ h \in G^{\ast} $ be of order m and $ u \in I(m)$ and $C_{G}(h)$ denote the conjugacy class of $h \in G$. Then  $$ \displaystyle{|Fix_{X,u}(h)| = |C_G(h)|  \sum_{\substack{1 \leq i \leq r \\ m \mid m_{i} \\ h \sim_{G} \phi(c_i)^{m_i u/m}}}} \frac{1}{m_i} $$
\end{lemma}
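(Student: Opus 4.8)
The plan is to lift the action to the universal cover, reduce the problem to a count over the fibres above the branch points, and convert the rotation-constant condition into a group-theoretic conjugacy count. Since $X$ has genus at least $2$, uniformization writes $X = \mathcal{U}/K$ for a torsion-free surface group $K \trianglelefteq \Gamma$, where $\Gamma = \Gamma(\sigma)$ realizes the signature $\sigma = (h_{0};m_{1},\dots,m_{r})$ (I write $h_{0}$ for the orbit genus to avoid clashing with the element $h$) and $G \cong \Gamma/K$ acts through the surface-kernel epimorphism $\phi \colon \Gamma \to G$. By the structure theorem for Fuchsian groups quoted above, every point of $X$ with nontrivial stabilizer lies over one of the $r$ branch points of $X/G = \mathcal{U}/\Gamma$; the fibre over the $i$-th branch point is a single $G$-orbit of size $|G|/m_{i}$, and the stabilizer of each of its points is cyclic of order $m_{i}$ and conjugate to $\langle \phi(c_{i}) \rangle$. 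In particular such a point can be fixed by $h$ only if $m = |h|$ divides $m_{i}$, which is the first condition in the sum.

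Next I would identify, for each $i$ with $m \mid m_{i}$, the power of $\phi(c_{i})$ that produces the prescribed rotation constant. Fix a base point $x_{i}$ over the $i$-th branch point whose stabilizer is exactly $\langle \phi(c_{i}) \rangle$ and at which $\phi(c_{i})$ acts as the canonical elliptic generator, so that $\zeta_{x_{i}}(\phi(c_{i})) = \zeta_{m_{i}}$ in the notation of the rotation-constant lemma. Then $\zeta_{x_{i}}(\phi(c_{i})^{k}) = \zeta_{m_{i}}^{k}$, and among the powers of order exactly $m$ the one with $\zeta_{x_{i}} = \zeta_{m}^{u}$ satisfies $\zeta_{m_{i}}^{k} = \zeta_{m}^{u} = \zeta_{m_{i}}^{(m_{i}/m)u}$, i.e. $k \equiv m_{i}u/m \pmod{m_{i}}$. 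Thus $w_{i} := \phi(c_{i})^{m_{i}u/m}$ is the distinguished local generator realizing rotation constant $\zeta_{m}^{-u}$, and it has order $m$ because $\gcd(u,m) = 1$. Because conjugation by any $g \in G$ is an automorphism of $X$ sending $x_{i}$ to $gx_{i}$ and intertwining the local actions, $g w_{i} g^{-1}$ fixes $gx_{i}$ with the same rotation constant; this conjugation-invariance of $\zeta_{\bullet}(\cdot)$ is what transports the computation from $x_{i}$ to the whole orbit.

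Finally I would run the count. Parametrize the fibre over the $i$-th branch point by cosets via $g\langle \phi(c_{i})\rangle \leftrightarrow gx_{i}$. The point $gx_{i}$ lies in $Fix_{X,u}(h)$ precisely when the element of $g\langle \phi(c_{i})\rangle g^{-1}$ with rotation constant $\zeta_{m}^{-u}$ at $gx_{i}$ is $h$, that is when $g w_{i} g^{-1} = h$; this condition does not depend on the coset representative since $w_{i}$ commutes with $\phi(c_{i})$. The solution set $\{\, g \in G : g w_{i} g^{-1} = h \,\}$ is empty unless $h \sim_{G} w_{i}$, and otherwise is a single coset of the centralizer of $h$, of size $|C_{G}(h)|$; since it is moreover a union of whole $\langle \phi(c_{i})\rangle$-cosets, it accounts for exactly $|C_{G}(h)|/m_{i}$ distinct fixed points. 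Summing over the indices $i$ with $m \mid m_{i}$ and $h \sim_{G} \phi(c_{i})^{m_{i}u/m}$, and noting that $|C_{G}(h)|$ (the order of the centralizer of $h$) is constant across these terms because $h \sim_{G} w_{i}$ there, gives
$$ |Fix_{X,u}(h)| = |C_{G}(h)| \sum_{\substack{1 \leq i \leq r \\ m \mid m_{i} \\ h \sim_{G} \phi(c_{i})^{m_{i}u/m}}} \frac{1}{m_{i}}. $$

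The step I expect to be the main obstacle is the middle one: fixing the normalization of the canonical generator $\phi(c_{i})$ and rigorously establishing the conjugation-invariance of the rotation constant. An incorrect normalization would multiply the exponent $m_{i}u/m$ by a unit modulo $m$, silently replacing $u$ by another residue and corrupting the indexing set of the sum; by contrast, once the local model is pinned down, the remaining counting is a routine application of the orbit-stabilizer theorem.
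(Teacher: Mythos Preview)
The paper does not supply its own proof of this lemma; it is simply quoted with the citation \cite{tb} (Breuer). Your argument is correct and is essentially the standard proof one finds in that reference: identify the fixed locus as the union of the $G$-orbits over the branch points, normalize the canonical elliptic generator so that $\phi(c_i)$ has rotation constant $\zeta_{m_i}$ at the base point, read off the exponent $m_i u/m$, and count via orbit--stabilizer. One remark worth flagging: the paper's wording ``$C_G(h)$ denote the conjugacy class of $h$'' is a slip; as your computation correctly shows, $|C_G(h)|$ in the formula must be the order of the \emph{centralizer} of $h$, not of its conjugacy class, and this is indeed the convention in Breuer's book.
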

\noindent A very crucial property of characters of finite groups that come from the Riemann surface is stated as follows.
\begin{theorem}[Eichler Trace Formula] \cite{tb}
 Let $ \sigma $ be an automorphism of order $ m \geq 2 $ of a compact Riemann Surface X of genus $ g \geq 2, $ and $ \chi $ the character of the action of $ Aut(X) $ on $ \mathcal{H}^{1}(X). $ Then 
$$ \chi(\sigma) = 1+ \sum_{u \in I(m)} |Fix_{X,u}(\sigma)| \frac{\zeta_{m}^{u}}{1-\zeta_{m}^{u}}. $$ Here $ \mathcal{H}^{1}(X) $ is the set of holomorphic differentials on Riemann surface X.  $ \mathcal{H}^{1}(X) $ is a complex vector space of  dimension $g(X)$ and  $\zeta_{m}$ is the primitive m-th root of unity.
\end{theorem}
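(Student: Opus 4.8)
\noindent The plan is to obtain the formula as the complex one-dimensional case of the \emph{holomorphic Lefschetz fixed point theorem} (the Woods Hole / Atiyah--Bott formula) applied to $\sigma$ acting on the structure sheaf $\mathcal{O}_X$, and then to convert the cohomological side into the character $\chi$ of the action on $\mathcal{H}^{1}(X)=H^{0}(X,\Omega_X^{1})$ via Serre duality. Because $m\geq 2$, the map $\sigma$ is a non-identity holomorphic automorphism, so its fixed points are isolated; moreover a finite-order biholomorphism is linearizable near each fixed point $p$ (average a local coordinate over the cyclic group $\langle\sigma\rangle$), so in a suitable chart $\sigma$ has the form $z\mapsto\zeta_{p}(\sigma)\,z$ with $\zeta_{p}(\sigma)=\zeta_{m}^{u}\neq 1$. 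Thus every fixed point is a simple (non-degenerate) fixed point, which is exactly the hypothesis needed to apply the theorem.

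\noindent First I would record the theorem in dimension one. For the pullback $\sigma^{*}$ it reads
$$\operatorname{tr}\!\big(\sigma^{*}\mid H^{0}(X,\mathcal{O}_X)\big)-\operatorname{tr}\!\big(\sigma^{*}\mid H^{1}(X,\mathcal{O}_X)\big)=\sum_{\sigma(p)=p}\frac{1}{1-\zeta_{p}(\sigma)},$$
the local term at $p$ being $1/\det(1-d\sigma_p)=1/(1-\zeta_{p}(\sigma))$. Grouping the fixed points by their rotation data, namely $\zeta_{p}(\sigma)=\zeta_{m}^{u}$ precisely when $p\in Fix_{X,u}(\sigma)$, the right-hand side becomes $\sum_{u\in I(m)}|Fix_{X,u}(\sigma)|\,(1-\zeta_{m}^{u})^{-1}$.

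\noindent Next I would identify the two cohomology groups, taking $\chi(\sigma)=\operatorname{tr}(\sigma^{*}\mid\mathcal{H}^{1}(X))$. Since $X$ is connected, $H^{0}(X,\mathcal{O}_X)=\mathbb{C}$ carries the trivial action and contributes $1$. By Serre duality $H^{1}(X,\mathcal{O}_X)\cong H^{0}(X,\Omega_X^{1})^{\ast}=\mathcal{H}^{1}(X)^{\ast}$, and because the duality pairing is the cup product into $H^{1}(X,\Omega_X^{1})\cong\mathbb{C}$, on which $\sigma^{*}$ acts trivially, the operator $\sigma^{*}$ on $H^{1}(X,\mathcal{O}_X)$ is contragredient to $\sigma^{*}$ on $\mathcal{H}^{1}(X)$. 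As $\sigma$ has finite order, the latter is diagonalizable with root-of-unity eigenvalues, so its contragredient has trace $\overline{\chi(\sigma)}$. Substituting gives $1-\overline{\chi(\sigma)}=\sum_{u\in I(m)}|Fix_{X,u}(\sigma)|\,(1-\zeta_{m}^{u})^{-1}$. Taking complex conjugates, using $\overline{\zeta_{m}^{u}}=\zeta_{m}^{-u}$ and the elementary identity $(1-\zeta_{m}^{-u})^{-1}=-\zeta_{m}^{u}(1-\zeta_{m}^{u})^{-1}$, and noting each coefficient $|Fix_{X,u}(\sigma)|$ is real, rearranges the equation into the asserted form $\chi(\sigma)=1+\sum_{u\in I(m)}|Fix_{X,u}(\sigma)|\,\zeta_{m}^{u}(1-\zeta_{m}^{u})^{-1}$.

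\noindent The main obstacle is the holomorphic Lefschetz fixed point theorem itself, which is the genuinely hard analytic input; I would either invoke it directly or, for a self-contained argument, reprove its one-dimensional instance by computing the supertrace of $\sigma^{*}$ on the Dolbeault ($\bar\partial$) complex and localizing the contribution at each linearized fixed point $z\mapsto\zeta_{p}(\sigma)z$, where the geometric series $\sum_{k\geq 0}\zeta_{p}(\sigma)^{k}$ produces the factor $(1-\zeta_{p}(\sigma))^{-1}$. The remaining care is bookkeeping of conventions: the distinction between $\sigma^{*}$ and the induced left action, the inverse appearing in Serre duality, and the single sign flip in the identity above. Tracking these correctly is exactly what turns $(1-\zeta_m^u)^{-1}$ into the stated $\zeta_m^u(1-\zeta_m^u)^{-1}$ and fixes the constant term $+1$, and it is also what distinguishes $|Fix_{X,u}(\sigma)|$ from $|Fix_{X,-u}(\sigma)|$ in the final sum.
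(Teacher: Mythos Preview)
Your argument is correct: applying the holomorphic Lefschetz (Atiyah--Bott/Woods Hole) fixed point formula to $\sigma^{*}$ on $\mathcal{O}_X$, identifying $H^{1}(X,\mathcal{O}_X)$ with $\mathcal{H}^{1}(X)^{*}$ via Serre duality, and then conjugating using $(1-\zeta_m^{-u})^{-1}=-\zeta_m^{u}(1-\zeta_m^{u})^{-1}$ is a standard and valid derivation of the Eichler trace formula. Your bookkeeping of the contragredient action (so that the trace on $H^{1}(X,\mathcal{O}_X)$ is $\overline{\chi(\sigma)}$) and of the grouping of fixed points by $\zeta_{p}(\sigma)=\zeta_m^{u}$ is accurate.

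There is nothing to compare against, however: the paper does not supply its own proof of this theorem. It is quoted in the preliminaries as a known result from Breuer's monograph \cite{tb}, alongside the auxiliary lemmas defining $\zeta_x(\sigma)$ and $|Fix_{X,u}(\sigma)|$, and is used only as background for the remark that certain irreducible characters of $PSL_2(\mathbb{F}_7)$ arise from Riemann surfaces. So your write-up goes well beyond what the paper itself provides; if anything, you might simply cite Atiyah--Bott (or Eichler's original argument via theta series, which is the other classical route) rather than sketch a reproof of the one-dimensional Lefschetz formula, since that is the genuinely heavy input and the rest is, as you say, linear-algebraic bookkeeping.
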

\section{Signatures of $PSL_2(\mathbb{F}_7)$}
\noindent In this section, we will prove the necessary and sufficient condition that classify all possible signature of $ PSL_2(\mathbb{F}_7)$.
\begin{lemma}
\label{lemma1}
$ (0; 2^{[a_{2}]}, 3^{[a_{3}]}, 4^{[a_{4}]}, 7^{[a_{7}]} ) $ is a signature of $ PSL_2(\mathbb{F}_7)$ if and only if
$$  1-168+ 42a_{2} + 56a_{3} + 63a_{4} + 72a_{7} \geq 3 $$
  \end{lemma}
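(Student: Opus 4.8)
The plan is to read the displayed inequality as the statement $g \ge 3 = \mu(PSL_2(\mathbb{F}_7))$ and then apply the Harvey condition. Substituting $|G| = 168$ and $h = 0$ into the Riemann--Hurwitz formula of the Harvey condition, an action with this signature lives on a surface of genus
$$ g \;=\; 1 - 168 + 84\sum_i\Big(1 - \tfrac{1}{m_i}\Big) \;=\; 1 - 168 + 42 a_{2} + 56 a_{3} + 63 a_{4} + 72 a_{7}, $$
because $84(1-\tfrac12)=42$, $84(1-\tfrac13)=56$, $84(1-\tfrac14)=63$ and $84(1-\tfrac17)=72$; this is automatically an integer, so no integrality obstruction arises. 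Hence the inequality is literally $g \ge 3$. For the forward (``only if'') direction I would simply note that the minimum genus of $PSL_2(\mathbb{F}_7)$ is $3$, so any faithful orientation-preserving action forces $g \ge 3$ and nothing more is needed.

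For the converse I would use the second clause of the Harvey condition together with the topological Riemann existence theorem: the tuple $(0; 2^{[a_{2}]}, 3^{[a_{3}]}, 4^{[a_{4}]}, 7^{[a_{7}]})$ is a signature if and only if there is a generating system $(x_1,\dots,x_r)$ of $G$ with $\prod_k x_k = 1$ in which exactly $a_2, a_3, a_4, a_7$ of the factors have orders $2, 3, 4, 7$. To manufacture such a system I would count, via the class multiplication coefficient corollary, the number of tuples in prescribed conjugacy classes with product $1$, and then subtract by inclusion--exclusion those tuples contained in a maximal subgroup. Since the maximal subgroups are the $14$ copies of $S_4$ and the $8$ copies of $\mathbb{Z}_7\rtimes\mathbb{Z}_3$, a tuple generates $G$ exactly when it lies in no maximal subgroup, and a positive count produces a genuine surface-kernel epimorphism.

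To organize the converse efficiently I would fix the number $a_4$ of order-$4$ periods and then travel in the $a_2, a_3, a_7$ directions using the extension principle, which splits a period of order $2$, $3$ or $7$ into two periods of the same order. Starting from a small list of explicitly verified base signatures---$(0;2,3,7)$ (the Klein quartic, $g=3$), the single-order minimal cases such as $(0;2^{[5]})$, $(0;3^{[4]})$, $(0;7^{[3]})$, and the mixed minimal cases on two or three of these orders---the extension principle reaches every admissible point with the same $a_4$ whose order-$2,3,7$ multiplicities dominate those of some base signature; since $g$ increases monotonically under splitting, remaining in the region $g\ge 3$ is automatic.

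The main obstacle will be the order-$4$ periods: because $4$ is neither $2$ nor an odd prime, the extension principle does not apply, so I cannot grow $a_4$ by splitting, and for each fixed support containing a $4$ there are infinitely many admissible values of $a_4$ to settle. Here I would argue directly from the character sum: for large $r$ the trivial-character term $|C|^{\,r}/|G|$ dominates the class multiplication coefficient, while the product-one counts inside the (smaller) maximal subgroups grow strictly more slowly, so the number of generating tuples is eventually positive; the finitely many small cases, notably the base family $(0;4^{[a_4]})$ at the least admissible $a_4$, would be checked by explicit computation. A secondary delicate point is verifying positivity of the generation count right at the boundary $g = 3$ and for signatures with very few periods, where the trivial-character term does not yet dominate and the maximal-subgroup correction must be estimated by hand.
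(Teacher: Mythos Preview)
Your plan is essentially the same as the paper's: the ``only if'' direction is exactly the minimum-genus observation $\mu(PSL_2(\mathbb{F}_7))=3$, and the ``if'' direction is organized around the class multiplication coefficient formula, comparison against the $14$ copies of $S_4$ and $8$ copies of $\mathbb{Z}_7\rtimes\mathbb{Z}_3$, and the extension principle in the $2,3,7$ directions, with a finite list of base triples and small tuples verified directly.

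The one place where you diverge is the growth in $a_4$. You flag this as the main obstacle and propose an asymptotic domination argument on the character sum; the paper instead uses a much more elementary device: once $(0;4,4,4)$ is known to be a signature via a generating triple $(e_1,e_2,e_3)$, one manufactures $(0;4^{[a_4]})$ for all $a_4\ge 3$ by appending inverse pairs $(e_1,e_1^{-1})$ (and reshuffling for the parity), since appending $(x,x^{-1})$ preserves both the product-one relation and generation. This trick works for any order, bypasses the odd-prime-or-$2$ restriction in the stated extension principle, and removes the need for any asymptotic estimate or tail of explicit checks. Your asymptotic approach would also succeed, but it is heavier and leaves a finite residue to compute; the inverse-pair trick closes the $a_4$ direction in one line.
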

\begin{proof}
It is an easy observation that any element of order $2,~ 3,~ 7 \text{ in }~ PSL_2(\mathbb{F}_7)$ can be written as a product of two element order $2,~ 3,~ 7$ in $PSL_2(\mathbb{F}_7)$ respectively. We will use this fact and extension principle multiple time to produce signatures of $PSL_2(\mathbb{F}_7)$ in the proof. We will carry the notations from \cite{atlas} to denote conjugacy classes, charater values.
 We only have to discuss the following three cases to prove the lemma,
\begin{enumerate}
\item If $ a_{2} + a_{3} + a_{4} + a_{7} =3. $ In this case if $a_{2} \geq 1$ then $ a_{7} \geq 1, $ else $ a_{4} + a_{7} \geq 1$. 
 \item $  a_{2} + a_{3} + a_{4} + a_{7} > 4 $ or $  a_{2} + a_{3} + a_{4} + a_{7} = 4 $ and $ a_{2} \neq 4$.
\item $  a_{2} \geq 5 $ if $a_3,a_4,a_7=0$. 

 \end{enumerate}
The minimum genus $\mu (G) = 3$ of $PSL_2(\mathbb{F}_7)$. So, $ (0; 2^{[a_{2}]}, 3^{[a_{3}]}, 4^{[a_{4}]}, 7^{[a_{7}]} ) $ is a possible signature of $ PSL_2(\mathbb{F}_7), $ then $$ 1-168+ 42a_{2} + 56a_{3} + 63a_{4} + 72a_{7} \geq 3.$$

Now we assume that $a_{2}+a_{3}+a_{4}+a_{7} =3$ and also assume $a_{7} \geq 1 \ whenever \ a_{2} \geq 1 \ and \ a_{4}+a_{7}=1.$

Now we discuss all the signatures  of the form $(0;m_1,m_2,m_3)$ one by one.

\begin{enumerate}
\item[Case 1.] Consider $(0;2,4,7)$. Now $C^{PSL_2(\mathbb{F}_7)}_{2,4,7}= ~ 168$. So if $(0;2,4,7)$ does not generate the group $PSL_2(\mathbb{F}_7)$ then the the only maximal subgroup that contain the element of order $2 ~ \text{and} ~4$ is $S_4$. but $7 \nmid ~ S_4$. this will lead to contradiction. Hence $(0;2,4,7)$ will be a signature of $PSL_2(\mathbb{F}_7)$.
\item[Case 2.] Now we consider the tuple $(0;2,7,7)$. Using coefficient multiplication formula we have $C_{2,7A,7A}^{PSL_2(\mathbb{F}_7)} =120 = C_{2,7B,7B}^{PSL_2(\mathbb{F}_7)} $ but $ C_{2,7A,7B}^{PSL_2(\mathbb{F}_7)} = 0$. If $ (0;2,7,7)$ is not a signature of $PSL_2(\mathbb{F}_7)$, then the only subgroup that contains element of order $7$ is $\mathbb{Z}_7 \rtimes \mathbb{Z}_3$ but $2 \nmid 21$. This will lead us a contradiction. So $ (0;2,7,7)$ will be a signature $ PSL_2(\mathbb{F}_7)$.
\item[Case 3.] Now consider $(0;3,3,4)$. If this tuple is not a signature of $PSL_2(\mathbb{F}_7)$ then the only maximal subgroup that contains element of order $3 ~ and ~ 4$ is $S_4$. Now from coefficient multiplication formula we have $C_{3,3,4}^{PSL_2(\mathbb{F}_7)}=~ 672 ~ and ~ 14\cdot C^{S_4}_{3,3,4}=~0$. So $(0;3,3,4)$ is a signature. Now $C_{3,4,4}^{PSL_2(\mathbb{F}_7)}=~ 672$  and $ 14. C^{S_4}_{3,4,4}=~336$. Hence $(0;3,4,4)$ is also signature by repeating the above argument.
\item[Case 4.] Now consider $(0;3,3,7)$. From coefficient multiplication formula we get $C_{3,3,7}^{PSL_2(\mathbb{F}_7)}=504$. If $(0;3,3,7)$ is not a signature of $PSL_2(\mathbb{F}_7)$ then the only maximal subgroup of $PSL_2(\mathbb{F}_7)$ is $\mathbb{Z}_7\rtimes \mathbb{Z}_3$. But $8 \cdot C_{4^{[3]}}^{\mathbb{Z}_7 \rtimes \mathbb{Z}_3}=0$. Hence $(0;3,3,7)$ is a signature of $PSL_2(\mathbb{F}_7)$. By similar kind of argument we can easily prove that $(0;3,7,7)$ is a signature of $PSL_2(\mathbb{F}_7)$. In this case $C_{3,7,7}^{PSL_2(\mathbb{F}_7)}=~ 216$ and $C_{3,7,7}^{\mathbb{Z}_7\rtimes \mathbb{Z}_3}=0$.
\item[Case 5.] Now consider the tuple $(0;3,4,7)$ and the corresponding coefficient multiplication formula $C^{PSL_2(\mathbb{F}_7)}_{3,4,7} ~ = ~ 336$. Now if the tuple does not generate $PSL_2(\mathbb{F}_7)$ then the only maximal subgroup containg elements of order $3 ~ \text{and} ~ 4 ~ \text{is} ~ S_4$. But $7 ~ \nmid 24$. Hence $(0;3,4,7)$ is a signature of $PSL_2(\mathbb{F}_7)$.
\item[Case 6.] Now consider the tuple $(0;4^{[3]})$. The only subgroup that contain element of order $4$ in $PSL_2(\mathbb{F}_7)$ is $S_4$. From the coefficient multiplication formula we have $C_{4^{[3]}}^{PSL_2(\mathbb{F}_7)}=~ 672$ but $C_{4^{[3]}}^{S_4}=~ 0$. So $(0;4,4,4)$ will be a signature of $PSL_2(\mathbb{F}_7)$.
\item[Case 7.] First consider $(0;4,4,7)$. From coefficient multiplication formula we have $C_{4,4,7}^{PSL_2(\mathbb{F}_7)}=~ 168$. As the only maximal subgroup of $PSL_2(\mathbb{F}_7)$ is $S_4$ that contain elements of order $4$. but $7 ~\nmid ~ 24$. Hence $(0;4,4,7)$ will be a signature of $PSL_2(\mathbb{F}_7)$.

\noindent Now consider $(0;4,7,7)$. Using coefficient multiplication formula we get $C_{4,7,7}^{PSL_2(\mathbb{F}_7)}=~ 336 ~ \text{or} ~ 0$. Now if possible let $(0;4,7,7)$ be not signature. Now the maximal subgroup of $PSL_2(\mathbb{F}_7)$ that contains element of order 4 is $S_4$ but $7 ~ \nmid ~ 24$. Hence $(0;4,4,7)$ is a signature of $PSL_2(\mathbb{F}_7)$.

\item[Case 8.] Now consider the tuple $(0;7^{[3]})$. The only subgroup that contain element of order $7$ in $PSL_2(\mathbb{F}_7)$ is $\mathbb{Z}_7\rtimes \mathbb{Z}_3$, and we have $8$ isomorphic copies of $\mathbb{Z}_7\rtimes \mathbb{Z}_3$ in $PSL_2(\mathbb{F}_7)$. Now $C_{7A,7A,7A}^{PSL_2(\mathbb{F}_7)}= 216 = C_{7B,7B,7B}^{PSL_2(\mathbb{F}_7)}$, and $C_{7A,7A,7B}^{PSL_2(\mathbb{F}_7)}= 24 = C_{7B,7B,7A}^{PSL_2(\mathbb{F}_7)}$ But $C_{7A,7A,7B}^{\mathbb{Z}_7\rtimes \mathbb{Z}_3}=~ 3 =C_{7B,7B,7A}^{\mathbb{Z}_7\rtimes \mathbb{Z}_3} ~\text {and} ~ C_{7A,7A,7A}^{\mathbb{Z}_7\rtimes \mathbb{Z}_3}=6=C_{7B,7B,7B}^{\mathbb{Z}_7\rtimes \mathbb{Z}_3}$. So $(0;7,7,7)$ is a signature of $PSL_2(\mathbb{F}_7)$, as $8.C_{7B,7B,7B}^{\mathbb{Z}_7\rtimes \mathbb{Z}_3} < C_{7B,7B,7B}^{PSL_2(\mathbb{F}_7)},~8. C_{7A,7A,7A}^{\mathbb{Z}_7\rtimes \mathbb{Z}_3} < C_{7A,7A,7A}^{PSL_2(\mathbb{F}_7)}$. 
\end{enumerate}

\noindent Now we consider $  a_{2} + a_{3} + a_{4} + a_{7} > 4 $ or $  a_{2} + a_{3} + a_{4} + a_{7} = 4 $ and $ a_{2} \neq 4. $
\begin{itemize}
\item[Case 1] $a_4\geq1$
\begin{itemize}
\item[Subcase 1] $a_2,a_3,a_7=0$
So $ a_{4} \geq 4. $ We already proved that $(0;4^{[3]})$ is a signature of $PSL_2(\mathbb{F}_7)$.
So there exists a suface kernel epimorphism $ \phi: \Gamma( 0;4,4,4 ) \rightarrow PSL_2(\mathbb{F}_7) $ where $ \Gamma( 0;4,4,4 ) = \langle  c_{1}, c_{2}, c_{3} \mid c_{1}^{4} = c_{2}^{4} = c_{3}^{4} = 1, c_{1}c_{2}c_{3} = 1 \rangle $ and $ \phi (c_{i}) =e_{i}  $ for $ i=1,2,3. $
Now, we define a map $ \phi: \Gamma( 0;4^{[a_{4}]} ) \rightarrow PSL_2(\mathbb{F}_7) $ where $ \Gamma( 0;4^{[a_{4}]} ) = \langle c_{1},\dots , c_{a_{4}} \mid c_{1}^{4} = \dots =c_{a_{4}}^{4}=1 , c_{1} \dots c_{a_{4}} =1  \rangle $ as follows: \\ when \emph{ $ a_{4} $ is even }
$$ \phi (c_{1} ) = e_{1},~ \phi (c_{2} ) = e_{1}^{-1}, $$ 
$$ \phi (c_{3} ) = e_{2},~ \phi (c_{4} ) = e_{2}^{-1}, $$
$$ \phi (c_{i} ) = e_{1} ;\text{i is odd and} \hspace{0.2cm} 5\leq i \leq a_{4},  $$ 
$$ \phi (c_{i} ) = e_{1}^{-1} ;\text{i is even and} \hspace{0.2cm} 5 \leq i \leq a_{4},$$ 
when \emph{ $ a_{4} $ is odd }
$$ \phi (c_{1} ) = e_{1},~ \phi (c_{2} ) = e_{2} $$ 
$$ \phi (c_{3} ) = e_{3},~ \phi (c_{4} ) = e_{1}^{-1},~ \phi (c_{5} ) = e_{1}, $$
$$ \phi (c_{i} ) = e_{1}^{-1} ;\text{i is odd and} \hspace{0.2cm} 6 \leq i \leq a_{4}, $$ 
$$ \phi (c_{i} ) = e_{1} ;\text{i is even and} \hspace{0.2cm} 6\leq i \leq a_{4}.$$
In the above $ \phi $ is surface-kernel epimorphism. Hence $ (0; 4^{[a_{4}]} ) $ is a signature of $ PSL( 2,7 ). $
\item[Subcase 2] $a_7\geq1$ \\
Now $ (0;2^{[a_{2}]},3^{[a_{3}]}, 4^{[a_{4}]}, 7^{[a_{7}]} ) $ is always a signature for $ PSL_2(\mathbb{F}_7) $ as the only maximal subgroup whose order is divisible by 7 is $ \mathbb{Z}_7 \rtimes \mathbb{Z}_3 $ but $  4 \nmid ~|\mathbb{Z}_7 \rtimes \mathbb{Z}_3|. $
\item[Subcase 3]$a_7=0$\\
$ a_{2}+a_{3}+a_{4} \geq 4. $
Consider $ (0; 2^{[a_{2}]},3^{[a_{3}]},4^{[a_{4}]}). $ Now $$ C_{2^{[a_{2}]},3^{[a_{3}]},4^{[a_{4}]}} ^{S_{4}} \leq 6^{a_{2}+a_{4}} \cdot 8^{a_{3}} / 12 \text{ and } C_{2^{[a_{2}]},3^{[a_{3}]},4^{[a_{4}]}}^{PSL_2(\mathbb{F}_7)} \geq 21^{a_{2}}  56^{a_{3}}  42^{a_{4}} / 144. $$
\noindent The only maximal subgroup in $PSL_2(\mathbb{F}_7)$ containg element of order $2,3,4$ is $S_4$, and $ C_{2^{[a_{2}]},3^{[a_{3}]},4^{[a_{4}]}}^{PSL_2(\mathbb{F}_7)} \geq C_{2^{[a_{2}]},3^{[a_{3}]},4^{[a_{4}]}} ^{S_{4}} . 7. $\\
\noindent So $ (0; 2^{[a_{2}]},3^{[a_{3}]},4^{[a_{4}]}) $ is a signature of $ PSL_2(\mathbb{F}_7). $
\end{itemize}
\item[Case 2] $a_4=0$
\begin{itemize}
\item[Subcase 1] $a_3\geq1 \text{ and }a_2,a_7=0$\\
\par
For $ a_{3} \geq 4,  \ we \ have \ g \geq 57 > 3. $ Now $ C^{S_{4}} _{3^{[4]}} \leq 427 $ , $ C^{\mathbb{Z}_7 \rtimes \mathbb{Z}_3} _{3^{[4]}} \leq 27 $ and  $ C^{PSL_2(\mathbb{F}_7)} _{3^{[4]}} \geq 74218 .$ But $ 7 \cdot 427 \leq 74218 $ and $ 8 \cdot 27 \leq 74218. $ Hence $ (0; 3^{[4]}) $ is a signature of  $ PSL_2(\mathbb{F}_7). $ Now by extension principle $ (0; 3^{[a_{3}]})_{a_{3} \geq 4} $ is a signature of  $ PSL_2(\mathbb{F}_7). $
\item[Subcase 2] $a_7\geq1 \text{ and }a_2,a_3=0$\\
\par
Since $ (0; 7,7,7) $ is a signature of $ PSL_2(\mathbb{F}_7),$ so by extension principle $ (0;7^{[a_{7}]} )_{a_{7} \geq 3} $ is always signature of $ PSL_2(\mathbb{F}_7). $
\item[Subcase 3] $a_7,~a_2\geq ~1 \text{ and } a_3=0$\\
\par
Since $(0;2,7,7)$ is a signature of $PSL_2(\mathbb{F}_7)$. So by extension principle $(0; 2^{[a_{2}]},7^{[a_{7}]}) $ is always a signature of $ PSL_2(\mathbb{F}_7)$ whenever $1-168+42a_2+72a_7\geq 3$. 
\item[Subcase 4]$a_7,a_3\geq1 \text{ and } a_2=0$\\
As $ (0;3,3,7) $ and $ (0; 3,7,7) $ are signature so by extension principle $ (0; 3^{[a_{3}]},7^{[a_{7}]} ) $ is always a signature of $ PSL_2(\mathbb{F}_7). $
\item[Subcase 5] $a_2,a_3\geq1 \text{ and } a_7=0$\\

\end{itemize}
As $(0;2,3,7)$ is a signature of $ PSL_2(\mathbb{F}_7). $ So $ \exists $ a surface-kernel epimorphism $ \phi: \Gamma(0;2,3,7) \rightarrow  PSL_2(\mathbb{F}_7).  $ where $ \Gamma(0;2,3,7)= \langle c_{1}, c_{2}, c_{3} \mid c_{1}^{2} =c_{2}^{3}=c_{3}^{7}=1,c_{1}c_{2}c_{3}=1 \rangle $ and $ \phi(c_{i}) = e_{i};i=1,2,3.$
Now we define a map
 $$ \phi: \Gamma(0; 2^{[a_{2}]}, 3^{[a_{3}]} )_{a_{2},a_{3} \ is \ even}  \rightarrow PSL_2(\mathbb{F}_7) \text{ as } $$  
$$ \phi(c_{1i})=e_{1i} ;~1 \leq i \leq a_{2} \text{ and i is odd,} $$ 
$$ \phi(c_{1i})=e_{1i}^{-1} ;~1 \leq i \leq a_{2} \text{ and i is even,} $$
$$ \phi(c_{2i})=e_{2i} ;~1 \leq i \leq a_{3} \text{ and i is odd,}$$ 
$$ \phi(c_{2i})=e_{2i}^{-1} ;~1 \leq i \leq a_{3}\text{ and i is even,} $$  
where 
$$ \Gamma(0; 2^{[a_{2}]}, 3^{[a_{3}]} )= \langle c_{11}, \dots , c_{1a_{2}}, c_{21}, \dots , c_{2a_{3}} \mid c_{11}^{2}
= \dots = c_{1a_{2}}^{2}=$$
 $$ c_{21}^{3}=c_{2a_{3}}^{3}=1,c_{11} \dots c_{1a_{2}} \dots c_{21} \dots c_{2a_{3}}=1 \rangle .  $$ 
Clearly, $ \phi: \Gamma(0; 2^{[a_{2}]}, 3^{[a_{3}]} ) \rightarrow PSL_2(\mathbb{F}_7) $ is surface-kernel epimorphism.
Now consider $ (0;2,2,2,3) $ and $ (0;2,3,3,3). $ If they do not generate $ PSL_2(\mathbb{F}_7) $ then the only maximal subgroup whose order is divided by both 2 and 3 is $ S_{4}. $ But an order 2 permutation is odd  permutation and order 3  permutation is even  permutation. That will lead to a contradiction. So $ (0;2,2,2,3) $ and $ (0;2,3,3,3) $ generate $ PSL_2(\mathbb{F}_7). $ Now by extension principle $ (0;2^{[a_{2}]},3^{[a_{3}]}) $ is a signature of $ PSL_2(\mathbb{F}_7) $ when $ a_{2}+ a_{3} \geq 4 $.
\end{itemize}

\noindent Now consider $ a_{2} \geq 5$ and $a_3,a_4,a_7=0$\\
For $ a_{2} \geq 5  \text{ we have } g \geq 43 > 3. $
Now $ 14.C^{S_{4}} _{2^{[5]}} = 0 $ or $78624$. Now $ C^{PSL_2(\mathbb{F}_7)} _{2^{[5]}}  \geq 134284. $
So $ C^{S_{4}} _{2^{[5]}} \cdot 14 \leq C^{PSL_2(\mathbb{F}_7)} _{2^{[5]}}.$ Let $H~= \langle e_{21},e_{22},e_{23},e_{24},e_{25}|e_{21}^2=~e_{22}^2=~e_{23}^2=~e_{24}^2=~e_{25}^2=$     $$~e_{21}.e_{22}.e_{23}.e_{24}.e_{25}=1;~e_{2i}\in PSL_2(\mathbb{F}_7),~ i=1,2,3,4,5\rangle,$$ then $H$ is a subgroup of $PSL_2(\mathbb{F}_7)$. If $H \neq PSL_2(\mathbb{F}_7)$, then $H$ is a subgroup of $S^4$, as $S^4$ is the only maximal subgroup that contains element of order $2$ in $PSL_2(\mathbb{F}_7)$. This implies all the solutions of the equation $e_{21}.e_{22}.e_{23}.e_{24}.e_{25}=1$ lie in $S_4$, as $S_4$ is the only maximal subgroup of $PSL_2(\mathbb{F}_7)$ that contains element of order 2, and  $e_{21},e_{22},e_{23},e_{24},e_{25} \in PSL_2(\mathbb{F}_7)$. This contradicts the fact $ C^{S_{4}} _{2^{[5]}} . 14 \leq C^{PSL_2(\mathbb{F}_7)} _{2^{[5]}}.$ \\ 
\noindent  Hence $ (0;2^{[5]}) $ is a signature of $ PSL_2(\mathbb{F}_7)$.
So by extension principle $ (0;2^{[a_{2}]}) $ is a signature of $ PSL_2(\mathbb{F}_7)$ for $a_2 \geq 5.$

\end{proof}

\noindent It is well known that $PSL_2(\mathbb{F}_7) $ is isomorphic to $GL_3(\mathbb{F}_2)$ \cite{isomat}, and along with that $GL_3(\mathbb{F}_2)$ is isomorphic to $PSL_3(\mathbb{F}_2)$. Let $ \psi: PSL_3(\mathbb{F}_2) \rightarrow PSL_2(\mathbb{F}_7) $ be an isomorphism. Now $ PSL_3(\mathbb{F}_2) $ is generated by \\
\[
A^{\shortmid} =
\begin{bmatrix}
1&1&0 \\
0&1&0 \\
0&0&1
\end{bmatrix}
,B^{\shortmid} =
\begin{bmatrix}
0&0&1 \\
1&0&0 \\
0&1&0
\end{bmatrix}\]
 Now    
$C^{\shortmid} =[A^{\shortmid},B^{\shortmid}]$ and $ |C^{\shortmid}| =4 $,
$ |A^{\shortmid}B^{\shortmid}| =7 $
, $ |B^{\shortmid}A^{\shortmid}| =7 $, $D^{\shortmid} =(A^{\shortmid}B^{\shortmid})^{-1}$, $[D^{\shortmid},B^{\shortmid}]= E^{\shortmid} \text{ and }  |E^{\shortmid}|=4.$ $[C^{\shortmid},E^{\shortmid}]= F^{\shortmid}$ and $ |F^{\shortmid}| = 7.$ $ B^{\shortmid}A^{\shortmid} (B^{\shortmid})^{-1} = G. $ $[F^{\shortmid},G^{\shortmid}]= H^{\shortmid},$ and $|H^{\shortmid}|=3.$ 
Let $$ \psi (A^{\shortmid}) = A, ~  \psi (B^{\shortmid}) = B, ~ \psi (C^{\shortmid}) = C, ~ \psi (D^{\shortmid}) = D, ~ \psi (E^{\shortmid}) = E, ~ \psi (F^{\shortmid}) = F, ~ \psi (G^{\shortmid}) = G,$$ $\psi (H^{\shortmid}) = H.$

\begin{lemma}
\label{lemma2}
$ ( 1;2^{[a_{2}]}, 3^{[a_{3}]}, 4^{[a_{4}]}, 7^{[a_{7}]} ) $ is a signature of $ PSL_2(\mathbb{F}_7) $ if and only if $ a_{2} + a_{3} + a_{4} + a_{7} \geq 1 $ and $ a_{i} \geq 0 $ for $ i = 2,3,4,7 $ except $ (1;2). $
\end{lemma}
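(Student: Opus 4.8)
The plan is to reduce the realizability of a genus-one signature to a pure generation question and then dispose of the exceptional tuple $(1;2)$ by an explicit commutator count. First I would record that a surface-kernel epimorphism $\phi\colon \Gamma(1;m_1,\dots,m_r)\to PSL_2(\mathbb{F}_7)$ is exactly a tuple $A,B,C_1,\dots,C_r$ with $|C_i|=m_i$, $\langle A,B,C_1,\dots,C_r\rangle = PSL_2(\mathbb{F}_7)$ and $[A,B]\,C_1\cdots C_r = 1$. Since $\tfrac{|G|}{2}(1-\tfrac{1}{m_i}) = 84-84/m_i$ is an integer for every admissible period $m_i\in\{2,3,4,7\}$, the Riemann--Hurwitz genus is automatically a (positive) integer, so part (1) of the Harvey lemma imposes nothing and only the existence of $\phi$ is at stake. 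I would then isolate the following sufficient condition: if $C_1,\dots,C_r$ have the prescribed orders and $\langle C_1,\dots,C_r\rangle = PSL_2(\mathbb{F}_7)$, then putting $K=C_1\cdots C_r$ and writing $K^{-1}=[A,B]$ realizes the signature, since $\langle A,B,C_1,\dots,C_r\rangle\supseteq\langle C_1,\dots,C_r\rangle$. The existence of such $A,B$ is the statement that every element of $PSL_2(\mathbb{F}_7)$ is a commutator, which I would verify directly from the character table via $\#\{(A,B):[A,B]=g\}=|G|\sum_{\chi}\chi(g)/\chi(1)$, checking positivity on each of the six classes.

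For sufficiency when $r\ge 2$ I would split into two cases. If the multiset of periods is not $\{2,\dots,2\}$, then some pair of periods is not $(2,2)$, and Lemma~\ref{lemma1} already furnishes a generating pair of those two orders: each triangle signature $(0;a,b,c)$ proved there yields $\langle X,Y\rangle = PSL_2(\mathbb{F}_7)$ for any two of its three generators (the third being $(XY)^{-1}$), and running through $(0;2,3,7),(0;2,4,7),(0;3,3,4),(0;3,4,7),(0;4,4,4),(0;7,7,7),(0;2,7,7)$ shows every pair of orders except $(2,2)$ generates the group. Setting two of the $C_i$ equal to such a pair and the rest to arbitrary elements of the correct orders, the reduction above finishes these tuples. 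The family $(1;2^{[a_2]})$ with $a_2\ge 2$ I would seed at $a_2=2$: using the explicit elements introduced before the lemma, $[A,B]=C$ has order $4$ with $\langle A,B\rangle=PSL_2(\mathbb{F}_7)$, and the order-$4$ element $C^{-1}$ is a product of two involutions $C_1,C_2$ inside its Sylow-$2$ subgroup $D_4$, so $A,B,C_1,C_2$ realizes $(1;2,2)$; the extension principle (splitting a period $2$) then gives all $a_2\ge 2$. For $r=1$ the reduction fails (a single element cannot generate), so here I would use the explicit commutators $[A,B]=C$, $[C,E]=F$, $[F,G]=H$ of orders $4,7,3$, which realize $(1;4),(1;7),(1;3)$ once one checks that the relevant pairs generate $PSL_2(\mathbb{F}_7)$.

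For necessity, the condition $a_2+a_3+a_4+a_7\ge1$ is forced because $\Gamma(1;\,)\cong\mathbb{Z}^2$ is abelian and cannot surject onto a simple group. The real content is that $(1;2)$ is \emph{not} a signature, i.e.\ no generating pair has a commutator of order $2$. I would fix an involution $z$ and count $(A,B)$ with $[A,B]=z$ by the Frobenius formula: from the six character values on the class $2A$ one gets $\#\{(A,B):[A,B]=z\}=168\cdot\tfrac{11}{21}=88$. It then suffices to show all $88$ pairs generate proper subgroups. A proper $\langle A,B\rangle$ lies in a maximal subgroup containing $z$; the copies of $\mathbb{Z}_7\rtimes\mathbb{Z}_3$ have odd order and are excluded, and in a copy of $S_4$ the commutator $z$ must lie in $A_4=[S_4,S_4]$, hence be a double transposition. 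An incidence count shows $z$ is a double transposition in exactly two copies $S,S'$ of $S_4$, and Frobenius in $S_4$ gives $56$ commutator pairs for a double transposition (and $0$ for a transposition, which is odd). Identifying $S\cap S'$ with $C_G(z)\cong D_4$, which is maximal in each $S_4$, and counting $24$ commutator pairs for the central involution of $D_4$, inclusion--exclusion gives $56+56-24=88$ non-generating pairs, exhausting all of them.

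The main obstacle is this last count. The delicate steps are showing that the two copies of $S_4$ in which $z$ is a double transposition meet exactly in $C_G(z)\cong D_4$ (so that no further inclusion--exclusion terms occur) and arranging the three character computations in $PSL_2(\mathbb{F}_7)$, $S_4$ and $D_4$ so that they close up exactly to $88$; any discrepancy would leave room for a generating pair and would contradict the claimed exception.
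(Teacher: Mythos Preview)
Your approach is correct and in fact cleaner than the paper's. The paper proceeds by brute-force casework: it first observes that any $(0;\ldots)$ signature promotes to $(1;\ldots)$ by sending the hyperbolic generators trivially, and then for each remaining small tuple $(1;2,2),(1;2,3),(1;2,4),\ldots,(1;4,7)$ it writes down an explicit surface-kernel epimorphism using the matrices $A,B,C,D,E,F,G,H\in PSL_3(\mathbb{F}_2)$, invoking the extension principle to propagate. For the exception $(1;2)$ the paper gives no argument at all and simply cites external databases \cite{notsig,GAP,jen}.

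Your reduction is more uniform: the observation that every element of $PSL_2(\mathbb{F}_7)$ is a commutator collapses the existence question for $(1;m_1,\dots,m_r)$ to producing \emph{any} generating tuple $C_1,\dots,C_r$ of the prescribed orders, and for $r\ge 2$ you harvest such pairs directly from the triangle signatures of Lemma~\ref{lemma1}. This avoids the long list of ad~hoc constructions. The real gain, however, is your treatment of $(1;2)$: the Frobenius count $168\cdot\tfrac{11}{21}=88$, together with the inclusion--exclusion over the two copies of $S_4$ in which the fixed involution is a double transposition (intersection $C_G(z)\cong D_4$, contributing $56+56-24=88$), gives a self-contained proof that no generating pair has commutator of order~$2$. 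This replaces the paper's external citation by an actual argument, and the identification $S\cap S'=C_G(z)$ via maximality of $D_4$ in $S_4$ is exactly the step that makes the count close up. The only places to tighten are the $r=1$ generation checks for $(C,E)$ and $(F,G)$, but these follow immediately from the order constraints on the maximal subgroups, as you indicate.
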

\begin{proof}
$(1;2^{[a_{2}]}, 3^{[a_{3}]}, 4^{[a_{4}]}, 7^{[a_{7}]})$ is a signature whenever $(0;2^{[a_{2}]}, 3^{[a_{3}]}, 4^{[a_{4}]}, 7^{[a_{7}]})$ is a signature of $ PSL_2(\mathbb{F}_7).$ We can do it easily by extending the surface-kernel epimorphism map $\phi: \Gamma (0;2^{[a_{2}]}, 3^{[a_{3}]}, 4^{[a_{4}]}, 7^{[a_{7}]}) \rightarrow PSL_2(\mathbb{F}_7)$ to $\phi^{\shortmid}:\Gamma(1;2^{[a_{2}]}, 3^{[a_{3}]}, 4^{[a_{4}]}, 7^{[a_{7}]}) \rightarrow PSL_2(\mathbb{F}_7)$ by assigning the value $A$ for  the hyperbolic elements under the extension map $\phi^{\shortmid}$ and the values of elliptic element under $\phi^{\shortmid}$ is same as the value of elliptic element under $\phi$.\\
We have to only consider the following cases.
\begin{itemize}
\item[Case 1] $a_2\geq 1$
\begin{itemize}
\item[Subcase 1] $a_2\geq 2$ and $a_3,a_4,a_7=0.$
First consider $\phi: \Gamma(1;2,2) \rightarrow PSL_2(\mathbb{F}_7) \text{ as }$ 
$$\phi(\alpha_1)=B,~\phi(\beta_1)=1,~\phi(c_{21})=A,~\phi(c_{22})=A.$$
Then clearly $\phi$ is surface-kernel epimorphism. Here  $  \Gamma (1; 2,2)= \langle \alpha_{1},\beta_{1},c_{21},c_{22} \mid [\alpha_{1},\beta_{1}]c_{21}c_{22}=1,c_{21}^{2}=1=c_{22}^{2}  \rangle. $ So
$ (1; 2^{[a_2]}) $ will generate $ PSL_2(\mathbb{F}_7)$ by extension principle for $a_2\geq 2.$ 
\item[Subcase 2] $a_3\geq 1$ and $a_4,a_7=0$\\
 As  $ |[A,B]| =4 .$ So $ [A,B] \in S_{4}. $ Now any element of order $4$ is product of one element of order $2$ and one element of order $3$ in $S_4$. Let $ [A,B]^{-1} = \sigma_{2}\sigma_{3} $ where $ |\sigma_{2}| =2, |\sigma_{3}| =3. $ Now we define
 $$ \phi; \Gamma (1; 2,3) \rightarrow PSL_2(\mathbb{F}_7) \text{ as }$$ 
 $$  \phi(\alpha_{1})=A,~ \phi(\beta_{1})=B,~
  \phi(c_{2})= \sigma_{2},~
\phi(c_{3})= \sigma_{3} $$ where $  \Gamma (1; 2,3)= \langle \alpha_{1},\beta_{1},c_{2},c_{3} \mid [\alpha_{1},\beta_{1}]c_{2}c_{3}=1,c_{2}^{2}=1=c_{3}^{3}  \rangle. $ \\
Now by extension principle $ (1; 2^{[a_{2}]},3^{[a_{3}]}) $ is a signature of $ PSL_2(\mathbb{F}_7).$
\item[Subcase 3] $a_4\geq 1$ and $a_3,a_7=0$\\
So $ a_{4},a_{2} \geq 1. $ First consider $ (1;2,4). $ We define $$ \phi: \Gamma(1;2,4) \rightarrow PSL_2(\mathbb{F}_7) $$ as
$$ \phi(\alpha_{1}) = A,~
\phi(\beta_{1}) = B,~
\phi(c_{21}) = C^{2},~
\phi(c_{41}) = C . $$ Clearly $ \phi $ is surface-kernel epimorphism.\\
Consider the signature $(1;2,4,4).$ Now $(0;2,4,7)$ is a signature of $PSL_2(\mathbb{F}_7).$ So $\exists$ a surface-kernel epimorphism $\phi:\Gamma(0;2,4,7) \rightarrow PSL_2(\mathbb{F}_7 \text{ as, }~ \phi(c_{21})=e_{21},~\phi(c_{41})=e_{41},~\phi(c_{71})=e_{71}.$ Here $  \Gamma (0; 2,4,7)= \langle c_{21},c_{41},c_{71} \mid c_{21}c_{41}c_{71}=1,c_{21}^{2}=1=c_{41}^{4}=c_{71}^7  \rangle. $  We define $$ \phi: \Gamma(1;2,4,4) \rightarrow PSL_2(\mathbb{F}_7) $$ as
$$ \phi(\alpha_{1}) = 1,~
\phi(\beta_{1}) = e_{21},~
\phi(c_{21}) = e_{41}^{2},~
\phi(c_{41}) = e_{41},~
\phi(c_{42}) = e_{41} . $$ Clearly $ \phi $ is surface-kernel epimorphism.\\
From the above argument we can say now that $ (1;2^{[a_{2}]},4^{[a_{4}]}) $ is a signature of $ PSL_2(\mathbb{F}_7).$
\item[Subcase 4] $a_7\geq 1$ and $a_3,a_4=0$\\
Consider $$ \phi: \Gamma(1;2,7) \rightarrow PSL_2(\mathbb{F}_7) $$ as
$$ \phi(\alpha_{1}) = F
,~\phi(\beta_{1}) = G
,~\phi(c_{21}) = A, $$
\[
\phi(c_{71}) =\psi \left(
\begin{bmatrix}
0&0&1 \\
0&1&1 \\
1&1&1
\end{bmatrix} \right)
.\] \\
Clearly  $ \phi $ is surface-kernel epimorphism. Now by extension principle $ (1;2^{[a_{2}]},7^{[a_{7}]}) $ is a signature of $ PSL_2(\mathbb{F}_7). $
\item[Subcase 5] $a_3,a_4\geq 1$ and $a_7=0$\\
Now $ \Gamma (1; 2^{[a_{2}]},3^{[a_{3}]},4^{[a_{4}]}) = \langle \alpha_{1},\beta_{1},c_{21}, \dots , c_{2a_{2}},c_{31},\dots,c_{3a_{3}},c_{41},\dots,c_{4a_{4}} \mid$ $$[\alpha_{1},\beta_{1}] c_{21} \dots  c_{2a_{2}} c_{31} \dots c_{3a_{3}} \dots c_{41} c_{4a_{4}}=1,$$ $$ c_{21}^{2} = \dots = c_{2a_{2}}^{2} = \dots = c_{31}^{3} = \dots = c_{3a_{3}}^{3} = c_{41}^{4} = \dots = c_{4a_{4}}^{4} = 1   \rangle. $$ \\
Now we first prove $ (1;2,3,4) $ is a signature.
$ \Gamma (1; 2,3,4) = \langle \alpha_{1},\beta_{1},c_{2},c_{3},c_{4}\mid$ $$ [\alpha_{1},\beta_{1}]c_{2}c_{3}c_{4}=1, c_{2}^{2}=c_{3}^{3}=c_{4}^{4}=1 \rangle.$$ Now we define $$ \phi: \Gamma (1; 2,3,4) \rightarrow PSL_2(\mathbb{F}_7) $$ as
$$ \phi (\alpha_{1})= A,~
,~\phi(\beta_{1}) = B
,~\phi (c_{2})= A
,~ \phi (c_{3})= B,  $$
\[
\phi(c_{4})=\psi \left(
\begin{bmatrix}
1&1&0 \\
1&1&1 \\
0&1&1
\end{bmatrix}\right)
.\]
Clearly $ \phi: \Gamma(1;2,3,4) \rightarrow PSL_2(\mathbb{F}_7) $ is surface-kernel epimorphism.
Now consider $ (1;2,3, 4,4 ). $
Now define $ \phi: \Gamma(1;2,3, 4,4 ) \rightarrow PSL_2(\mathbb{F}_7) $ as
$$ \phi (\alpha_{1})= A,~\phi(\beta_{1}) = B$$ $$\phi (c_{2})= \sigma_{2}; |\sigma_{2}|=2, \sigma_{2} \in S_{4} $$
$$ \phi (c_{3})= \sigma_{3}; |\sigma_{3}|=3, \sigma_{3} \in S_{4} $$
$$ \phi(c_{41})= [A,B],~ \phi(c_{42})= [A,B]^{-1}. $$ \\
Clearly $ \phi: \Gamma(1;2,3, 4,4 ) \rightarrow PSL_2(\mathbb{F}_7) $ is surface-kernel epimorphism. Using above technique and extension principle we can say $ (1; 2^{[a_{2}]},3^{[a_{3}]},4^{[a_{4}]}) $ is a signature of $ PSL_2(\mathbb{F}_7). $
\end{itemize}
\item[Case 2] $a_2=0$
\begin{itemize}
\item[Subcase 1] $a_3\geq 1$ and $a_4,a_7=0$\\
\emph{Let $ a_{3} $ be odd}. Now consider $ (1;3^{[a_{3}]})_{a_{3} \geq 1}. $ Now we define 
$$ \phi: \Gamma (1;3^{[a_{3}]}) \rightarrow PSL_2(\mathbb{F}_7), \text{ as } $$  
$$ \phi(\alpha_{1}) = F, ~
 \phi(\beta_{1}) = G, $$
$$ \phi (c_{31})= H^{-1} $$
$$ \phi(c_{3i}) = H;~ 2 \leq i \leq a_{3} \text{ and i is even,} $$ 
$$ \phi(c_{3i}) = H^{-1} ;~ 2 \leq i \leq a_{3} \text{ and i is odd.} $$  
If $ \phi $ is not surface-kernel epimorphism then the only subgroup whose order is divisible by $7$ is $7:3$ but $2$ does not divide $7:3$. So $ \phi $ is surface-kernel epimorphism. Hence $ (1;3^{[a_{3}]})_{a_{3} \geq 1}. $ generate $ PSL_2(\mathbb{F}_7). $ \\
\emph{Let $ a_{3} $ be even.} \\
Now $ \Gamma (1;3^{[a_{3}]}) = \langle \alpha_{1},\beta_{1}, c_{31}, \dots, c_{3a_{3}} \mid [\alpha_{1},\beta_{1}]c_{31} \dots c_{3a_{3}} =1, |c_{31}| = \dots = |c_{3a_{3}}| = 3 \rangle $
As $ (0; 3,3,7) $ is a signature of $ PSL_2(\mathbb{F}_7) $ so we have surface-kernel epimorphism $$ \phi: \Gamma(0;3,3,7) \rightarrow PSL_2(\mathbb{F}_7) $$ where $ \Gamma(0,3,3,7)= \langle c_{1},c_{2},c_{3} \mid c_{1} c_{2} c_{3}=1, c_{1}^{3}=c_{2}^{3}=c_{3}^{7}=1 \rangle $ defined as $$ \phi(c_{i}) = e_{i} \text{ for i=1,2,3.} $$ \\
Now we define $ \phi: \Gamma (1;3^{[a_{3}]}) \rightarrow PSL_2(\mathbb{F}_7)  $ as 
$$ \phi(\alpha_{1})=e_{3},~
\phi(\beta_{1})=1,~
\phi(c_{31})=e_{1},~
\phi(c_{32})=e_{1}^{-1} $$
$$ \phi(c_{3i})=e_{2} ;~3 \leq i \leq a_{3}  \text{ and i is odd},$$  
$$ \phi(c_{3i})=e_{2}^{-1};~ 4 \leq i \leq a_{3} \text{ and i is even.}$$ 
Clearly $ \phi $ is surface-kernel epimorphism. So $  (1;3^{[a_{3}]}) $ generate $ PSL_2(\mathbb{F}_7). $
\item[Subcase 2] $a_4\geq 1$ and $a_3,a_7=0$\\
First we show that $ (1;4) $ is signature of $ PSL_2(\mathbb{F}_7). $ Consider a map $$ \phi: \Gamma(1;4) \rightarrow PSL_2(\mathbb{F}_7) $$ defined as 
$$ \phi(\alpha_{1}) = A,~ \phi(\beta_{1}) = B ,~ \phi(c_{1}) = C^{-1}. $$ Then clearly $ \phi $ is surface-kernel epimorphism.\\
Consider the signature $ (1;4,4). $  Now we define $$ \phi: \Gamma(1;4,4) \rightarrow PSL_2(\mathbb{F}_7)\text{ as }  $$  
$$ \phi(\alpha_{1}) = A,~
\phi(\beta_{1}) = B,~
,~\phi(c_{41}) = E,~
\phi(c_{42}) = (CE)^{-1}. $$ Then $ \phi $ is surface-kernel epimorphism. So clearly $ (1;4^{[a_{4}]})_{a_{4}\geq 1} $ is a signature of $ PSL_2(\mathbb{F}_7). $
\item[Subcase 3] $a_7\geq 1$ and $a_3,a_4=0$\\
Consider $ a_{7} \geq 1. $ We now define $$ \phi: \Gamma(1;7) \rightarrow PSL_2(\mathbb{F}_7) $$ as 
$$ \phi(\alpha_{1}) = C,~
\phi(\beta_{1}) = E,~
\phi(c_{71}) = F^{-1}. $$ Clearly $ \phi $ is surface-kernel epimorphism as only maximal subgroup whose order is divisible by 7 is $ \mathbb{Z}_7 \rtimes \mathbb{Z}_3 $ but 4 does not divide the order of $ \mathbb{Z}_7 \rtimes \mathbb{Z}_3 $. Now by extension principle $ (1;7^{[a_{7}]})_{a_{7}\geq 1} $ is a signature of $ PSL_2(\mathbb{F}_7).$
\item[Subcase 4] $a_3,a_4\geq1$ and $a_7=0$\\
Consider $ (1;3,4). $ We define $$ \phi: \Gamma(1;3,4) \rightarrow PSL_2(\mathbb{F}_7)\text{ defined as } $$ 
 $$ \phi(\alpha_{1})= D,~
\phi(\beta_{1})= B,~
\phi(c_{31})= B, $$
\[
\phi(c_{41}) =\psi \left(
\begin{bmatrix}
1&1&1 \\
1&1&0 \\
1&0&1
\end{bmatrix} \right)
.\] 
Consider $ (1;3,4,4). $ We know $ (0;3,4,4) $ is a signature of $ PSL_2(\mathbb{F}_7). $ Now $$ \Gamma (0;3,4,4) = \langle c_{31},c_{41},c_{42} \mid  c_{31}^{3}=c_{41}^{4}=c_{42}^{4}=1, c_{31}c_{41}c_{42}=1 \rangle. $$
 So $ \exists  $ a surface-kernel epimorphism $$ \phi: \Gamma (0;3,4,4) \rightarrow PSL_2(\mathbb{F}_7) $$ defined as
$$ \phi(c_{31})=e_{31},~ \phi(c_{41})=e_{41},~ \phi(c_{42})=e_{42}. $$
Now we define $$  \phi: \Gamma (1;3,4,4) \rightarrow PSL_2(\mathbb{F}_7) $$ as
$$ \phi(\alpha_{1})=e,~
\phi(\beta_{1})=A,~
,~\phi(c_{31})=e_{31}
,~\phi(c_{41})=e_{41}
,~\phi(c_{42})=e_{42}. $$ Clearly $$  \phi: \Gamma (1;3,4,4) \rightarrow PSL_2(\mathbb{F}_7) $$ is surface-kernel epimorphisms. So $ (1;3^{[a_{3}]},4^{[a_{4}]}) $ is signature of $ PSL_2(\mathbb{F}_7) $
\item[Subcase 5] $a_3,a_7\geq1$ and $a_4=0$\\
 Consider $ (1;3,7). $ We define a map $$ \phi: \Gamma(1;3,7) \rightarrow PSL_2(\mathbb{F}_7) $$ as
$$ \phi(\alpha_{1}) = F
,~\phi(\beta_{1}) = G
,~\phi(c_{31}) = B, $$
\[
\phi(c_{71}) =\psi \left(
\begin{bmatrix}
0&1&1 \\
1&1&1 \\
0&1&0
\end{bmatrix} \right)
.\] \\
Clearly  $ \phi $ is surface-kernel epimorphism. Now by extension principle $ (1;3^{[a_{3}]},7^{[a_{7}]}) $ is a signature of $ PSL_2(\mathbb{F}_7). $
\item[Subcase 6] $a_4,a_7\geq1$ and $a_3=0$\\
Now consider $ (1;4,7). $ Let $ \phi: \Gamma (1;4,7) \rightarrow PSL_2(\mathbb{F}_7) $ defined as
$$ \phi(a_{1}) = F,~ \phi(b_{1}) = E, $$
\[
\phi(c_{41}) =\psi \left(
\begin{bmatrix}
0&0&1 \\
0&1&1 \\
1&0&0
\end{bmatrix} \right)
,\] 
$$ \phi(c_{71}) = F. $$ If possible let $ \phi $ is not surface-kernel epimorphism. $ Im(\phi) $ contain a order 7 element and only maximal subgroup whose order is divisible by 7 is $ \mathbb{Z}_7 \rtimes \mathbb{Z}_3, \text{ and }  Im(\phi) $ also contain order 4 element and $ 4 \nmid |\mathbb{Z}_7 \rtimes \mathbb{Z}_3| $. A contradiction. so $ \phi $ is surface-kernel epimorphism.\\
Now consider $ (1;4,4,7). $ We know $ (0;4,4,7) $ is signature of $ PSL_2(\mathbb{F}_7). $ So $ \exists $ a surface-kernel epimorphism $$ \phi: \Gamma(0;4,4,7) \rightarrow PSL_2(\mathbb{F}_7) $$ defined as
$$ \phi(c_{41})=e_{41},~\phi(c_{42})=e_{42},~,~\phi(c_{71})=e_{71}.$$
Now we define $$ \overline{\phi}: \Gamma (1;4,4,7) \rightarrow PSL_2(\mathbb{F}_7). $$
as 
$$ \overline{\phi}(\alpha_{1}) =1,~
\overline{\phi}(\beta_{1}) =1
,~\overline{\phi}(c_{41}) =e_{41}
,~\overline{\phi}(c_{42}) =e_{42}
,~\overline{\phi}(c_{71}) =e_{71}. $$ Clearly $ \overline{\phi} $ define a surface-kernel epimorphism. So clearly $ (1;4^{[a_{4}]},7^{[a_{7}]}) $ is clearly signature using above technique.
\end{itemize}
\item[Case 3] Now we consider $(1;2).$ It is not signature for the group $PSL_2(\mathbb{F}_7)$ \cite{notsig,GAP,jen}. 
\end{itemize}  
\end{proof}

\begin{lemma}
\label{lemma3}
$ ( h;2^{[a_{2}]}, 3^{[a_{3}]}, 4^{[a_{4}]}, 7^{[a_{7}]} ) $ where $ h \geq 2 $ is a signature of $ PSL_2(\mathbb{F}_7) $ if and only if $ a_{i} \geq 0 $ for $ i = 2,3,4,7. $
\end{lemma}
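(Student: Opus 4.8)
The plan is to verify both parts of the Harvey condition (Lemma \ref{Harvey condition}) for every tuple $(h; 2^{[a_2]}, 3^{[a_3]}, 4^{[a_4]}, 7^{[a_7]})$ with $h \geq 2$. Since each $a_i \geq 0$ holds automatically, only the existence direction carries content; the ``only if'' is vacuous. The Riemann--Hurwitz part is immediate: with $|G| = 168$ and $m_i \in \{2,3,4,7\}$, each quantity $168(1 - 1/m_i) \in \{84, 56, 42, 24\}$ is an even integer, so $2g - 2 = 168(2h-2) + \sum_i 168(1 - 1/m_i)$ is an even integer and $g$ is a genuine integer; moreover $2h - 2 \geq 2$ forces $g$ well above $\mu(PSL_2(\mathbb{F}_7)) = 3$. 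Thus the only real task is to produce a surface-kernel epimorphism $\phi : \Gamma(\sigma) \to PSL_2(\mathbb{F}_7)$ for each such $\sigma$.

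First I would reduce to the case $h = 2$ by a handle-addition argument: given a surface-kernel epimorphism $\phi : \Gamma(h; \ldots) \to G$, I extend it to $\Gamma(h+1; \ldots)$ by sending the new hyperbolic pair $\alpha_{h+1}, \beta_{h+1}$ to the identity. Then $[\alpha_{h+1}, \beta_{h+1}]$ is trivial, so the long relation is preserved, the orders of all elliptic generators are untouched, and surjectivity is inherited. Hence it suffices to realize every period tuple at $h = 2$. (Note one cannot simply add a handle to the $h=1$ cases of Lemma \ref{lemma2}, precisely because $(1;\emptyset)$ and $(1;2)$ fail; a direct $h=2$ construction is needed.)

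At $h = 2$ the defining relation reads $[\alpha_1, \beta_1][\alpha_2, \beta_2]\, c_1 \cdots c_r = 1$. I would fix once and for all a generating pair $A, B$ of $PSL_2(\mathbb{F}_7)$ as in the $PSL_3(\mathbb{F}_2)$ presentation above (for which $|AB| = 7$ and $\langle A, B\rangle = G$), set $\phi(\alpha_1) = A,\ \phi(\beta_1) = B$, and choose the elliptic images $\phi(c_j) = C_j$ to be arbitrary elements of the prescribed orders $2,3,4,7$. Writing $P = C_1 \cdots C_r$ and $K = [A, B]$, the relation forces $[\phi(\alpha_2), \phi(\beta_2)] = K^{-1} P^{-1} =: Q$. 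Since $\langle A, B\rangle = G$ already, the map is surjective for any completion, so it remains only to realize the single prescribed element $Q$ as a commutator $[\phi(\alpha_2), \phi(\beta_2)]$.

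The crux, and the step I expect to be the main obstacle, is exactly this last point: every element of $PSL_2(\mathbb{F}_7)$ must be expressible as a single commutator. This is the commutator-width-one property for $PSL_2$ over a finite field, a classical fact (a special case of the Ore conjecture) that I would invoke directly, or else verify computationally across the $168$ group elements. Granting it, $Q$ is realized as a commutator, $\phi$ becomes a surface-kernel epimorphism, and the lemma follows at $h = 2$; handle-addition then delivers every $h \geq 2$. No exceptional tuples survive, since the combined freedom in choosing $A, B$, the elliptic images $C_j$, and the pair $\alpha_2, \beta_2$ leaves no room for the obstructions that appeared at $h = 0$ and $h = 1$.
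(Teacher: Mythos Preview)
Your argument is correct and takes a genuinely different route from the paper. The paper proceeds ad hoc: it explicitly constructs surface-kernel epimorphisms for the two tuples $(h;-)$ and $(h;2)$ that could not be inherited from $h\le 1$ (sending $\alpha_1\mapsto A,\ \beta_1\mapsto 1,\ \alpha_2\mapsto B,\ \beta_2\mapsto 1$ for the first, and using $[A,B]^2\cdot C^2=1$ for the second), and then waves at the methods of Lemmas~\ref{lemma1} and~\ref{lemma2} for every remaining period tuple. Your approach is uniform: fix $\phi(\alpha_1)=A,\ \phi(\beta_1)=B$ to guarantee surjectivity, pick the elliptic images freely with the correct orders, and absorb whatever element results into the second commutator via the Ore property (commutator width one) for $PSL_2(\mathbb{F}_7)$. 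This handles every $(2;2^{[a_2]},3^{[a_3]},4^{[a_4]},7^{[a_7]})$ in one stroke and decouples the lemma from the earlier case analyses; the price is invoking a nontrivial external fact, though for a group of order $168$ it is easily verified via the character criterion already quoted in the paper before Lemma~\ref{2}. One minor slip: your list $\{84,56,42,24\}$ is $168/m_i$, not $168(1-1/m_i)$; the correct values are $\{84,112,126,144\}$, but either list consists of even integers so the Riemann--Hurwitz integrality check is unaffected.
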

\begin{proof}
We have to discuss only two following cases.
\begin{enumerate}
\item[Case 1]
let $ a_{i} = 0 ;\ i= 2,3,4,7. $ \\
We define $ \phi: \Gamma(h;-) \rightarrow PSL_2(\mathbb{F}_7) $ as 
$$ \phi(\alpha_{1}) = A 
,~\phi(\beta_{1}) = 1
,~\phi(\alpha_{2}) = B 
,~ \phi(\beta_{2}) = 1 $$
$$\phi(\alpha_{i}) = A ; \ 3 \leq i \leq h $$
$$\phi(\beta_{i}) = A ; \ 3 \leq i \leq h.  $$ Clearly $ \phi $ is surface-kernel epimorphism. 
\item[Case 2]
Now consider $(h_{\geq2};2).$ 

We have to prove $(h_{\geq2};2)$ is a signature of $PSL_2(\mathbb{F}_7).$ Now consider a map 
$$ \phi: \Gamma(h;2) \rightarrow PSL_2(\mathbb{F}_7)$$ as $\phi(\alpha_1)=A,~ \phi(\beta_1)=B,\phi(\alpha_2)=A,~ \phi(\beta_2)=B,~ \phi(c_1)= C^2. $\\
Then $ \phi: \Gamma(h;2) \rightarrow PSL_2(\mathbb{F}_7)$ is a surface -kernel epimorphism. Hence $(h_{\geq2};2)$ is a signature of $PSL_2(\mathbb{F}_7).$
\end{enumerate}
The other cases can be proved by the same method that were used in \ref{lemma1} and \ref{lemma2}.

\end{proof}

\begin{proof}[Proof of Theorem \ref{main theorem 1}] The proof follows from \ref{lemma1}, \ref{lemma2}, \ref{lemma3}.
\end{proof}

\begin{corollary}
$\chi_2 \text{ and } \chi_6$ \cite{tb} are the only two irreducible characters $PSL_2(\mathbb{F}_7)$ that comes from the Riemann surfaces corresponding to the signatures  
$(0;2,3,7) \text{ and } (0;3,3,4)$ respectively. 
\end{corollary}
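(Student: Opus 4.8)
The plan is to read ``coming from a Riemann surface'' through the Eichler Trace Formula: a character $\chi$ of $G=PSL_2(\mathbb{F}_7)$ arises from a Riemann surface exactly when it is the character of the $G$-action on the space $\mathcal{H}^1(X)$ of holomorphic differentials for some compact $X$ of genus $g\geq 2$ carrying a faithful $G$-action. The decisive first observation is that $\chi(1)=\dim_{\mathbb{C}}\mathcal{H}^1(X)=g$. Since the irreducible degrees of $PSL_2(\mathbb{F}_7)$ are $1,3,3,6,7,8$, an \emph{irreducible} such $\chi$ forces $g\in\{1,3,6,7,8\}$; and because the minimum genus is $3$, the value $g=1$ is excluded, eliminating the trivial character $\chi_1$. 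Thus it remains only to decide, for each $g\in\{3,6,7,8\}$, whether a faithful action exists and, if so, whether the resulting Eichler character is irreducible.

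First I would eliminate $g=6$ and $g=7$ purely by Riemann--Hurwitz, equivalently by the classification of Theorem~\ref{main theorem 1}. Solving $1+168(h-1)+42a_{2}+56a_{3}+63a_{4}+72a_{7}=g$ with periods in $\{2,3,4,7\}$ has no solution for $g\in\{6,7\}$: for $h=0$ the required weighted period count (over the common denominator, the targets $173$ and $174$ built from summands $42,56,63,72$) has no representation, while for $h\geq 1$ the genus equals $1$ only for $(1;-)$ and otherwise is at least $43$, in either case missing $6$ and $7$. Hence $PSL_2(\mathbb{F}_7)$ admits no faithful action on a surface of genus $6$ or $7$, so the irreducible characters $\chi_4$ (degree $6$) and $\chi_5$ (degree $7$) cannot come from any Riemann surface.

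Next, for $g=3$ the same count leaves only the signature $(0;2,3,7)$. Here I would argue structurally: for $g\geq 2$ the action on $\mathcal{H}^1(X)$ is faithful, and a $3$-dimensional faithful representation of the simple group $PSL_2(\mathbb{F}_7)$ can be neither a multiple of the trivial representation nor built from nonexistent irreducibles of degree $1$ or $2$; therefore $\chi_X$ is already irreducible of degree $3$, namely $\chi_2$ (its complex conjugate $\chi_3$ being the companion character indexed by the other class of order-$7$ elements). For $g=8$ the count leaves only $(0;3,3,4)$, and here I would run the Eichler Trace Formula explicitly: computing each $|Fix_{X,u}(\sigma)|$ from the preceding lemma (noting that no period $3,3,4$ is divisible by $7$, so the order-$7$ fixed sets are empty) and summing the contributions via $\sum_{u\in I(m)}\zeta_m^u/(1-\zeta_m^u)=-\varphi(m)/2$, one gets $(\chi_X(1A),\chi_X(2A),\chi_X(3A),\chi_X(4A),\chi_X(7A),\chi_X(7B))=(8,0,-1,0,1,1)$, which is exactly the irreducible $\chi_6$.

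Assembling these facts yields that the only irreducible characters realized are $\chi_2$, from $(0;2,3,7)$, and $\chi_6$, from $(0;3,3,4)$. The main obstacle is the genus-$8$ case: correctly tabulating $|Fix_{X,u}(\sigma)|$ for every class — in particular tracking which powers $\phi(c_i)^{m_i u/m}$ land in $2A$ and, more delicately, which land in the split classes $7A$ versus $7B$ — and then summing the roots-of-unity terms without arithmetic slips. A secondary subtlety is making the genus-$3$ irreducibility argument genuinely structural rather than merely numerical, and stating cleanly that the conjugate $\chi_3$ is realized as well, so that ``$\chi_2$'' should be understood up to the complex conjugation swapping $7A\leftrightarrow 7B$.
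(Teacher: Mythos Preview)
The paper states this corollary without proof, so there is nothing to compare against directly; your argument is correct and is precisely the one the paper's preliminaries (the Eichler Trace Formula and the fixed-point counting lemma) are set up to support. Your degree filter $g\in\{3,6,7,8\}$, the Riemann--Hurwitz elimination of $g=6,7$, the faithfulness-plus-dimension argument at $g=3$, and the explicit Eichler computation at $g=8$ yielding $(8,0,-1,0,1,1)=\chi_6$ all check out; your remark that the genus-$3$ case really produces the pair $\{\chi_2,\chi_3\}$ (swapped by the outer automorphism exchanging $7A$ and $7B$) is a genuine refinement of the paper's statement rather than a gap in your proof.
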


\begin{corollary} 
The stable upper genus value of $PSL_2(\mathbb{F}_7)$ is $399$.

\end{corollary}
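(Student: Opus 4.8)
The plan is to reinterpret the genus spectrum $SP(PSL_2(\mathbb{F}_7))$ through the numerical semigroup generated by the admissible genus increments, and then reduce the claim to a Frobenius-number computation. By the Riemann--Hurwitz formula with $|G|=168$, a period $m_i \in \{2,3,4,7\}$ contributes $84(1-1/m_i) \in \{42,56,63,72\}$ to $g-1$, while raising the orbit genus $h$ by one contributes $168$. Write $S := \langle 42,56,63,72\rangle$ for the numerical semigroup these increments generate; since $\gcd(42,56,63,72)=1$, $S$ has finite complement in $\mathbb{Z}_{\geq 0}$.

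First I would prove the reduction: for $g \geq 3$ one has $g \in SP(PSL_2(\mathbb{F}_7))$ if and only if $g+167 \in S$. For the forward implication I use Theorem \ref{main theorem 1}: any realizing signature $(h;\dots)$ gives $g-1 = 168(h-1)+s$ with $s \in S$; since $168 = 4\cdot 42 \in S$, for $h \geq 1$ this forces $g-1 \in S$, hence $g+167 = (g-1)+168 \in S$, while for $h=0$ one gets $g+167 = s \in S$ directly. Conversely, if $g+167 = s \in S$ then $s \geq 170$, so any generator-representation of $s$ yields periods $(a_2,a_3,a_4,a_7)$ whose associated signature satisfies the bound $1-168+s \geq 3$ of Lemma \ref{lemma1} with orbit genus $0$. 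The single excluded signature $(1;2)$ creates no gap, because its genus $43$ is independently realized by $(0;2^{[5]})$, consistent with $43+167 = 210 = 5\cdot 42 \in S$.

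Next I would compute the Frobenius number $F(S)$. The efficient route is to reduce modulo $7$: since $42,56,63$ are divisible by $7$ but $72$ is not, I write $S = 7\langle 6,8,9\rangle + 72\,\mathbb{Z}_{\geq 0}$. A short check gives $72 = 8\cdot 9 \in \langle 6,8,9\rangle$, so within each residue class $r \pmod 7$ the elements of $S$ form the translate $72\,d_0(r) + 7\langle 6,8,9\rangle$, where $d_0(r) \in \{0,\dots,6\}$ is the least $d$ with $2d \equiv r \pmod 7$. The Frobenius number of $\langle 6,8,9\rangle$ is $19$ (its gap set being $\{1,2,3,4,5,7,10,11,13,19\}$), so the largest non-element of $S$ in class $r$ is $72\,d_0(r) + 7\cdot 19 = 72\,d_0(r) + 133$. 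As $d \mapsto 2d \bmod 7$ is a bijection of $\{0,\dots,6\}$, one has $\max_r d_0(r) = 6$ (attained at $r=5$), whence $F(S) = 72\cdot 6 + 133 = 565$.

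Finally I would conclude. By the reduction, the non-achievable genera $g \geq 3$ are exactly those with $g+167 \notin S$; the largest such comes from $g+167 = F(S) = 565$, i.e. $g = 398$. Thus every $g \geq 399$ is achievable while $398$ is not, so the stable upper genus of $PSL_2(\mathbb{F}_7)$ equals $399$. The main obstacle is the Frobenius-number step, together with making the reduction airtight at the boundary (the minimum genus $3$ and the excluded $(1;2)$); the modular reduction above is precisely what renders the otherwise four-generator Frobenius computation tractable by hand, in place of the brute-force search.
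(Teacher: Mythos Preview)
Your argument is correct and takes a genuinely different route from the paper. The paper proceeds computationally: a brute-force Python search over bounded parameter ranges shows that $g=398$ cannot be written as $1+168(h-1)+42a_2+56a_3+63a_4+72a_7$; an explicit table exhibits signatures for each $399\le g\le 441$; and then any $g\ge 442$ is handled by writing $g-399\equiv s\pmod{42}$ and adding further period-$2$ branch points to the signature already found for $399+s$. By contrast, you recast the whole question as the determination of the Frobenius number of the numerical semigroup $S=\langle 42,56,63,72\rangle$ via the equivalence $g\in SP(PSL_2(\mathbb{F}_7))\iff g+167\in S$ for $g\ge 3$, and then compute $F(S)=565$ by hand using the observation that three of the four generators are divisible by $7$: this reduces the four-generator Frobenius problem to that of $\langle 6,8,9\rangle$ (with Frobenius number $19$) within each residue class modulo $7$. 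What your approach buys is a machine-free, conceptual proof that explains \emph{why} $398$ is the last gap and generalizes readily to other groups once the analogue of Theorem~\ref{main theorem 1} is in hand; what the paper's approach buys is a straightforward verification that requires no number-theoretic machinery, at the cost of relying on the computer search and the explicit table. Both arguments rest on the same foundation, namely Theorem~\ref{main theorem 1} (for your converse implication, via Lemma~\ref{lemma1} at orbit genus $0$) together with the observation that the exceptional signature $(1;2)$ at $g=43$ is independently covered by $(0;2^{[5]})$.
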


\noindent To prove our claim that the stable upper genus value of $PSL_2(\mathbb{F}_7)$ is $399$, we first have to show that $PSL_2(\mathbb{F}_7)$ can not act on a compact, connected, orientable surface of genus $398$ preserving the orientation. If possible let $$ 398 ~ = ~ 1+168(h-1) + 42a_2 + 56a_3 + 63a_4 + 72a_7.$$ Then the value of $h$ could be at most $4$. Similarly the values of $a_i$ could be at most $14,~ 11,~ 9,~ 8$ for $i= ~ 2,~ 3,~ 4,~ 7$ respectively. So We will consider $$0 ~ \leq h ~ \leq 4$$ $$0 ~ \leq a_2 ~ \leq 14$$ $$0 ~ \leq a_3 ~ \leq 11$$ $$0 ~ \leq a_4 ~ \leq 9$$ $$0 ~ \leq a_7 ~ \leq 8,$$ and execute the following python code. 

\newpage

\lstset{language=Python}
\lstset{frame=lines}
\lstset{caption={$398$ is not an admissable signature of $PSL_2(\mathbb{F}_7)$}}
\lstset{label={lst:code_direct}}
\lstset{basicstyle=\footnotesize}
\begin{lstlisting}
def func1(h,a2,a3,a4,a7):
   return 1+168*(h-1) + 42*a2 + 56*a3 + 63*a4 + 72*a7


 for h in range(4):
   for a2 in range(14):
     for a3 in range(11):
       for a4 in range(9):
         for a7 in range(8):
           sol = func1(h,a2,a3,a4,a7)
           if sol >3:
             if sol < 400:
               if sol == 398:
                print("wrong") 


\end{lstlisting}

\noindent It is immediate from above that $PSL_2(\mathbb{F}_7)$ can not act on a compact, connected, orientable surface of genus $398$ preserving the orientation. It is well understood from the following table that $PSL_2(\mathbb{F}_7)$ can act on a compact, connected, orientable surface of genus $g$ where $399 ~ \leq ~ g ~ \leq ~ 441$ preserving the orientation. Now we have to prove that $PSL_2(\mathbb{F}_7)$ can act on all compact, connected, orientable surface of genus $g ~ \geq ~ 441$ preserving the orientation. Let $g ~ \geq 442$, and $\Sigma_{g}$ be a compact, connected, orientable surface of genus $g$. So we have $$  g-399 ~ \equiv ~ s ~ (mod ~42) ~ \text{ where } ~1 ~ \leq ~ s ~ \leq 41.$$ Then $g ~ = ~ l+n.42$ where $ l ~= 399+ s$. We know the signature corresponding to the genus $l$ as $399~\leq l~ \leq 441$ and let it be $(h;m_2,~m_3,~m_4,~m_7)$. Then the signature corresponding to the genus $g$ is $(h;m_2+n,~m_3,~m_4,~m_7)$. In this way we can find signature corresponding to genus $g ~ \geq 442$. This completes the proof of our claim.
 \newpage
 
 \begin{table}
\resizebox{8cm}{!}
	{\begin{tabular}{|c | c |c |c | c| c|}
		\hline
		Genus value & orbifold genus $= h$ & $a_2$& $a_3$ & $a_4$ & $a_7$ \\ \hline
		$399$ & $0$ & $4$ & $1$ & $2$ & $3$ \\  
		$400$ & $0$ & $0$ & $0$ & $1$ & $7$ \\
		$401$ & $0$ & $0$ & $5$ & $0$ & $4$ \\
		$402$ & $0$ & $0$ & $1$ & $7$ & $1$\\
		$403$ & $0$ & $1$ & $3$ & $0$ & $5$\\
		$404$ & $0$ & $0$ & $2$ & $5$ & $2$\\
		$405$ & $0$ & $2$ & $1$ & $0$ & $6$\\
		$406$ & $0$ & $0$ & $3$ & $3$ & $3$\\
		$407$ & $0$ & $0$ & $8$ & $2$ & $0$\\
		$408$ & $0$ & $0$ & $4$ & $1$ & $4$\\
		$409$ & $0$ & $0$ & $0$ & $0$ & $8$\\
		$410$ & $0$ & $1$ & $2$ & $1$ & $5$\\
		$411$ & $0$ & $0$ & $1$ & $6$ & $2$\\
		$412$ & $0$ & $2$ & $0$ & $1$ & $6$\\
		$413$ & $0$ & $0$ & $2$ & $4$ & $3$\\
		$414$ & $0$ & $0$ & $7$ & $3$ & $0$\\
		$415$ & $0$ & $0$ & $3$ & $2$ & $4$\\
		$416$ & $0$ & $0$ & $8$ & $1$ & $1$\\
		$417$ & $0$ & $0$ & $4$ & $0$ & $5$\\
	    $418$ & $0$ & $0$ & $0$ & $7$ & $2$\\
		$419$ & $0$ & $1$ & $2$ & $0$ & $6$\\
		$420$ & $0$ & $0$ & $1$ & $5$ & $3$\\
		$421$ & $0$ & $0$ & $6$ & $4$ & $0$\\
		$422$ & $0$ & $0$ & $2$ & $3$ & $4$\\
		$423$ & $0$ & $0$ & $7$ & $2$ & $1$\\
		$424$ & $0$ & $0$ & $3$ & $1$ & $5$\\
		$425$ & $0$ & $0$ & $8$ & $0$ & $2$\\
		$426$ & $0$ & $1$ & $1$ & $1$ & $6$\\
		$427$ & $0$ & $0$ & $0$ & $6$ & $3$\\
		$428$ & $0$ & $0$ & $5$ & $5$ & $0$\\
		$429$ & $0$ & $0$ & $1$ & $4$ & $4$\\
		$430$ & $0$ & $0$ & $6$ & $3$ & $1$\\
		$431$ & $0$ & $0$ & $2$ & $2$ & $5$\\
		$432$ & $0$ & $0$ & $7$ & $1$ & $2$\\
		$433$ & $0$ & $0$ & $3$ & $0$ & $6$\\
		$434$ & $0$ & $1$ & $5$ & $1$ & $3$\\
		$435$ & $0$ & $0$ & $4$ & $6$ & $0$\\
		$436$ & $0$ & $0$ & $0$ & $5$ & $4$\\
		$437$ & $0$ & $0$ & $5$ & $4$ & $1$\\
		$438$ & $0$ & $0$ & $1$ & $3$ & $5$\\
		$439$ & $0$ & $0$ & $6$ & $2$ & $2$\\
		$440$ & $0$ & $0$ & $2$ & $1$ & $6$\\
		$441$ & $0$ & $0$ & $7$ & $0$ & 
		$3$\\ \hline
		 
	\end{tabular}}
	\caption{Signature values corresponding to genus values}
		\label{Table 1: Genus vs Signature}
	\end{table}

\newpage
\begin{corollary}
For a signature $(h;2^{[a_2]},3^{[a_3]},4^{[a_4]},7^{[a_7]})$ we have a branched covering $\Sigma_g$ of $\Sigma_h$ with $Deck(\Sigma_g/\Sigma_h)=PSL_2(\mathbb{F}_7)$ where $g=1+168(h-1)+42.a_2+56.a_3+63.a_4+72.a_7$ and $g \geq ~3.$ So for such every surface $\Sigma_g$ we will have a unique irreducible polynomial $F(w)$ over $\mathcal{M}(\Sigma_h)$ using $Theorem 2.6$ such that $\mathcal{M}(\Sigma_g) ~ \cong ~ \mathcal{M}(\Sigma_h)(F(w))$.  
\end{corollary}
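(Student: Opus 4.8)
The plan is to chain the classification Theorem \ref{main theorem 1} with the covering–function-field dictionary recorded in Section 2. First I would exploit the genus hypothesis directly: since $g = 1+168(h-1)+42a_2+56a_3+63a_4+72a_7 \geq 3$ and the datum is not the one excluded pair $(1;2)$, Theorem \ref{main theorem 1} guarantees that $(h;2^{[a_2]},3^{[a_3]},4^{[a_4]},7^{[a_7]})$ is an admissible signature of $PSL_2(\mathbb{F}_7)$. By the Harvey condition (Lemma \ref{Harvey condition}) this is equivalent to an effective, orientation-preserving, conformal action of $PSL_2(\mathbb{F}_7)$ on a compact Riemann surface $\Sigma_g$ whose orbit space is $\Sigma_h$, and the Riemann–Hurwitz identity in that lemma reproduces precisely the displayed value of $g$. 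Because $\Sigma_h = \Sigma_g/PSL_2(\mathbb{F}_7)$ is the quotient by the full acting group, the branched quotient map $p:\Sigma_g \to \Sigma_h$ is regular (Galois), and its group of deck transformations is exactly $\mathrm{Deck}(\Sigma_g/\Sigma_h) = PSL_2(\mathbb{F}_7)$.

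Next I would pass from geometry to field theory. Let $B \subset \Sigma_h$ be the finite branch locus determined by the periods of the signature; then $p$ restricts to a finite unramified Galois covering of $\Sigma_h \setminus B$ with deck group $PSL_2(\mathbb{F}_7)$. Invoking the dictionary of Theorem 2.6, together with the identification $L \cong \mathcal{M}(Y)$ flagged in the introduction, I would identify this covering with a Galois extension of meromorphic function fields whose top field is the function field of the cover. Concretely, $\mathcal{M}(\Sigma_g)/\mathcal{M}(\Sigma_h)$ is a Galois extension with Galois group isomorphic to $\mathrm{Deck}(\Sigma_g/\Sigma_h) = PSL_2(\mathbb{F}_7)$, so in particular $[\mathcal{M}(\Sigma_g):\mathcal{M}(\Sigma_h)] = |PSL_2(\mathbb{F}_7)| = 168$.

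Finally I would manufacture the polynomial. Since $\mathcal{M}(\Sigma_h)$ has characteristic zero, the finite extension $\mathcal{M}(\Sigma_g)/\mathcal{M}(\Sigma_h)$ is separable, so the primitive element theorem supplies a single generator $w \in \mathcal{M}(\Sigma_g)$ with $\mathcal{M}(\Sigma_g) = \mathcal{M}(\Sigma_h)(w)$. Its minimal polynomial $F$ over $\mathcal{M}(\Sigma_h)$ is then irreducible of degree $168$, and $\mathcal{M}(\Sigma_g) \cong \mathcal{M}(\Sigma_h)[w]/(F)$, which is what the statement abbreviates by $\mathcal{M}(\Sigma_h)(F(w))$; once the generator $w$ is fixed, $F$ is its unique monic minimal polynomial.

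The main obstacle I expect is one of scope rather than substance: Theorem 2.6 as quoted is stated only for the base $\mathbb{CP}^1$ (equivalently $\mathbb{C}(x)$), whereas here the base is an arbitrary $\Sigma_h$ possibly with $h \geq 1$. The crux of a rigorous write-up is therefore to invoke the general form of the covering–function-field equivalence over an arbitrary compact Riemann surface — the Riemann existence theorem together with the GAGA-type correspondence between finite covers of $\Sigma_h \setminus B$ and finite extensions of $\mathcal{M}(\Sigma_h)$ — rather than the $\mathbb{CP}^1$-special case literally stated. A secondary point to phrase carefully is the word \emph{unique}: the polynomial $F$ is not canonical, since different primitive elements yield different $F$; what is canonical is the extension $\mathcal{M}(\Sigma_g)/\mathcal{M}(\Sigma_h)$ itself, and $F$ becomes unique only after the generator $w$ has been chosen.
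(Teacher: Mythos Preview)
The paper states this corollary without proof; it is presented as an immediate consequence of Theorem~\ref{main theorem 1} together with the preliminaries in Section~2. Your proposal is a faithful and careful unpacking of exactly that implicit argument --- signature $\Rightarrow$ faithful conformal action via the Harvey condition $\Rightarrow$ Galois branched cover with deck group $PSL_2(\mathbb{F}_7)$ $\Rightarrow$ Galois extension of function fields $\Rightarrow$ primitive element --- so there is no methodological divergence to report.

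Your two critical remarks are well taken and actually sharpen the paper. The reference in the corollary to ``Theorem~2.6'' is evidently meant to point to the covering--function-field dictionary (Theorem~2.4 in the paper's numbering), which, as you note, is literally stated only over $\mathbb{CP}^1$; the general-base version is the correct tool and is standard, but the paper does not supply it. Likewise, your caveat on the word \emph{unique} is accurate: the extension $\mathcal{M}(\Sigma_g)/\mathcal{M}(\Sigma_h)$ is canonical, but the minimal polynomial $F$ depends on the chosen primitive element $w$, and the paper's phrasing overstates the uniqueness.
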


\section{Signatures of $PSL_2(\mathbb{F}_{11})$}
\begin{lemma} \label{1}
$(0;2^{[a_{2}]},3^{[a_{3}]},5^{[a_{5}]},6^{[a_{6}]},11^{[a_{11}]})$ is a signature of $PSL_2(\mathbb{F}_{11})$ if and only if $-659+ 165a_{2} + 220a_{3} + 264a_{5} + 275a_6 +300a_{11} \geq 26$. 
\end{lemma}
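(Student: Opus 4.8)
The plan is to recognise the displayed inequality as the statement $g\ge\mu(PSL_2(\mathbb{F}_{11}))$ and then treat the two directions separately. First I would record that $|PSL_2(\mathbb{F}_{11})|=\frac{10\cdot 11\cdot 12}{2}=660$, so that the Riemann--Hurwitz formula of Lemma \ref{Harvey condition} with $h=0$ reads
$$g=1-660+330\sum_{i}\Big(1-\tfrac1{m_i}\Big)=-659+165a_2+220a_3+264a_5+275a_6+300a_{11},$$
using $330(1-\tfrac12)=165$, $330(1-\tfrac13)=220$, $330(1-\tfrac15)=264$, $330(1-\tfrac16)=275$ and $330(1-\tfrac1{11})=300$. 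Hence the inequality is precisely $g\ge 26$, and since the minimum genus of $PSL_2(\mathbb{F}_{11})$ is $26$, the ``only if'' direction is immediate: a signature forces $PSL_2(\mathbb{F}_{11})$ to act faithfully on $\Sigma_g$, so $g\in SP(PSL_2(\mathbb{F}_{11}))$ and therefore $g\ge 26$. All the content lies in the converse.

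For the ``if'' direction I would follow the template of Lemma \ref{lemma1}, combining class-multiplication counts with the extension principle. The first step is to establish a finite list of short base signatures $(0;m_1,m_2,m_3)$ (and a few four-period tuples) that already meet the bound $g\ge 26$, among them $(0;2,3,11)$, $(0;2,5,5)$, $(0;3,3,5)$, $(0;5,5,5)$, $(0;2,6,6)$ and $(0;6,6,11)$. For each such triple I would apply the class multiplication coefficient formula to compute $C^{PSL_2(\mathbb{F}_{11})}_{m_1,m_2,m_3}$, the number of solutions of $xyz=1$ with prescribed conjugacy classes, and compare it with the corresponding count inside the maximal subgroups, weighted by the number of copies: $22$ for $A_5$, $12$ for $\mathbb{Z}_{11}\rtimes\mathbb{Z}_5$ and $55$ for $D_{12}$. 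A non-generating triple must lie inside one maximal subgroup, so whenever the global count strictly exceeds the combined maximal-subgroup count a generating triple exists and yields a surface-kernel epimorphism. Many triples generate for free by order/divisibility reasons, exactly as $7\nmid|S_4|$ was used for $PSL_2(\mathbb{F}_7)$: any triple carrying an element of order $11$ together with one of order $2,3$ or $6$ must generate, because $\mathbb{Z}_{11}\rtimes\mathbb{Z}_5$ is the only maximal subgroup of order divisible by $11$ and it contains no element of order $2,3$ or $6$; likewise an order-$6$ element paired with an order-$5$ or order-$11$ element forces generation since order-$6$ elements live only in $D_{12}$. Throughout I would keep careful track of the splittings $5A/5B$ and $11A/11B$, mirroring the $7A/7B$ bookkeeping in Case~8 of Lemma \ref{lemma1}.

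The second step is to climb from the base signatures to the general tuple. I would verify, again through the coefficient formula, that every element of order $2,3,5$ or $11$ decomposes as a product of two elements of the same order; since each of these orders is $2$ or an odd prime and $\tfrac{660}{2}(1-\tfrac1m)$ is an integer for all admissible $m$, the extension principle then lets me append arbitrarily many periods $2,3,5,11$ to a known signature. The remaining work is organised by which of the $a_i$ are nonzero: in each support pattern I exhibit a minimal signature directly and pad it up with the extension principle. Two points need separate care. The period $6$ is neither $2$ nor an odd prime, so the extension principle does not apply to it; I would instead handle $6$ by explicit decompositions inside $D_{12}$ together with the order arguments above. And the all-involution family $(0;2^{[a_2]})$ with $a_2\ge 5$ cannot be reached from a smaller order-$2$ signature, since $(0;2^{[4]})$ and shorter tuples have $g\le 1<26$; so I would establish the base case $(0;2^{[5]})$ directly by the subgroup-counting argument, noting that the five involutions cannot all lie in a single $A_5$ or $D_{12}$. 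This is exactly the point at which the present result contradicts the claim of \cite{oza} that $(0;2,2,2,2,2)$ is not a signature.

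The main obstacle is the base-case verification itself: it requires evaluating a substantial number of class-multiplication coefficients from the full character table of $PSL_2(\mathbb{F}_{11})$ and, for each relevant short signature, confirming that the global solution count beats the combined maximal-subgroup count. The most delicate pieces are the split classes $5A/5B$ and $11A/11B$, where the wrong pairing can give a vanishing coefficient, and the period-$6$ signatures, where the extension principle is unavailable and one must argue directly inside $D_{12}$. Finally I must confirm that no tuple satisfying $g\ge 26$ is accidentally non-realizable, so that the equivalence holds with no exceptional cases.
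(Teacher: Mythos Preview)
Your proposal is correct and follows essentially the same approach as the paper: the ``only if'' direction is the minimum-genus bound, and the ``if'' direction is a case analysis by the support of $(a_2,a_3,a_5,a_6,a_{11})$, using divisibility arguments against the maximal subgroups $A_5$, $\mathbb{Z}_{11}\rtimes\mathbb{Z}_5$, $D_{12}$ for many triples, explicit class-multiplication comparisons (with the $5A/5B$ and $11A/11B$ splittings) for the rest, and the extension principle to climb from short base signatures to arbitrary tuples. Your organisation is in fact a bit cleaner than the paper's: you flag explicitly that the extension principle does not apply to the period $6$ (since $6$ is neither $2$ nor an odd prime) and that the all-involution base case $(0;2^{[5]})$ must be done by hand, whereas the paper absorbs these points into its long list of subcases without singling them out.
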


\begin{proof}
We have to consider the following cases.
\begin{itemize}
\item[Case 1.] $a_{11}>0.$
\begin{itemize}
\item[Subcase 1.] Let $a_2>0,~a_3,~a_5,~a_6\geq 0.$ The class multiplication coefficient is always non zero. Also the only maximal subgroup which contain element of order $11$ is $\mathbb{Z}_{11} \rtimes \mathbb{Z}_5,$ but $2 \nmid |\mathbb{Z}_{11} \rtimes \mathbb{Z}_5|$. Hence $(0;2^{[a_{2}]},3^{[a_{3}]},5^{[a_{5}]},6^{[a_{6}]},11^{[a_{11}]})$ is always a signature of $PSL_2(\mathbb{F}_{11}).$
\item[Subcase 2.] Let $a_3 >0,~a_2,~a_5,~a_6\geq 0.$ By the similar argument as above $(0;2^{[a_{2}]},3^{[a_{3}]},5^{[a_{5}]},6^{[a_{6}]},11^{[a_{11}]})$ is again a signature of $PSL_2(\mathbb{F}_{11}).$
\item[Subcase 3.] $a_6 > 0,~ a_2,~ a_3,~ a_5 \geq0.$ By the similar argument as above it is clear that $(0;2^{[a_{2}]},3^{[a_{3}]},5^{[a_{5}]},6^{[a_{6}]},11^{[a_{11}]})$ is a signature of $PSL_2(\mathbb{F}_{11}).$
\item[Subcase 4.] Let $a_5 > 0,~ a_2,~ a_3,~ a_6 = 0.$ Now $$C_{5,5,11}^{\mathbb{Z}_{11} \rtimes \mathbb{Z}_5}=0=C_{5,11,11}^{\mathbb{Z}_{11} \rtimes \mathbb{Z}_5},$$ and $$C_{5,5,11}^{PSL_2(\mathbb{F}_{11})}=1320$$ $$C_{5,11,11}^{PSL_2(\mathbb{F}_{11})}=1440.$$ Hence $(0;5,5,11) \text{ and } (0;5,11,11)$ are signatures of $PSL_2(\mathbb{F}_{11}).$ As any element of order $5 \text{ and } 11$ can be written as product of two element of order $5 \text{ and } 11$ respectively. Then by extension principle  $(0;5^{[a_{5}]},11^{[a_{11}]})_{a_{11}\geq 2}$ and $(0;5^{[a_{5}]},11^{[a_{11}]})_{a_5 \geq 2}$ are signatures of $PSL_2(\mathbb{F}_{11}).$
\item[Subcase 5.] Now we consider $a_i=0$ for $i=2,3,5,6.$ If possible let $(0;11,11,11)$ is not a signature for $PSL_2(\mathbb{F}_{11}).$ The only maximal subgroup which contain element of order $11$ is $\mathbb{Z}_{11} \rtimes \mathbb{Z}_5$ and we have $12$ isomorphic copies of $\mathbb{Z}_{11} \rtimes \mathbb{Z}_5$ in $PSL_2(\mathbb{F}_{11}).$ Now $$C_{11A,11A,11A}^{PSL_2(\mathbb{F}_{11})}=840=C_{11B,11B,11B}^{PSL_2(\mathbb{F}_{11})},$$ and $$12\cdot C_{11A,11A,11A}^{\mathbb{Z}_{11} \rtimes \mathbb{Z}_5}=180=12 \cdot C_{11B,11B,11B}^{\mathbb{Z}_{11} \rtimes \mathbb{Z}_5},$$ $$12 \cdot C_{11A,11B,11B}^{\mathbb{Z}_{11} \rtimes \mathbb{Z}_5}=120=12 \cdot C_{11A,11A,11B}^{\mathbb{Z}_{11} \rtimes \mathbb{Z}_5}.$$ Hence $(0;11,11,11)$ is a signature of $PSL_2(\mathbb{F}_{11}).$ Now by extension principle $(0;11^{[a_{11}]})_{a_{11}\geq3}$ is a signature of $PSL_2(\mathbb{F}_{11}).$
\item[Subcase 6.] Now we assume $a_2=0,~a_3,~a_5,~a_6 > 0.$ If possible let $(0;3^{[a_3]},5^{[a_5]},6^{[a_6]},11^{[a_{11}]})$ is not a signature of $PSL_2(\mathbb{F}_{11}).$ Then $3\nmid 55$ as the only maximal subgroup containg element of order $11$ is $\mathbb{Z}_{11} \rtimes \mathbb{Z}_5$ which is not possible. Hence $(0;3^{[a_3]},5^{[a_5]},6^{[a_6]},11^{[a_{11}]})$ signature of $PSL_2(\mathbb{F}_{11}).$

In a similar way, we can show that $(0;2^{[a_2]},5^{[a_5]},6^{[a_6]},11^{[a_{11}]}),$ $(0;2^{[a_2]},3^{[a_3]},6^{[a_6]},11^{[a_{11}]})$,  $(0;2^{[a_2]},3^{[a_3]},5^{[a_5]},11^{[a_{11}]})$ are signatures of $PSL_2(\mathbb{F}_{11})$  when $a_3=0,~a_5=0,~a_6=0$ respectively.
\end{itemize}
\item[Case 2.] Now we assume $a_{11}=0.$
\begin{itemize}
\item[Subcase 1.] Let $a_2 \geq 1,~ a_3,~a_5,~a_6=0.$
Now $C^{PSL_2(\mathbb{F}_{11})}_{2^{[a_2]}}=6 \cdot 55^{a_{2}-2},$ but $22 \cdot C^{A_4}_{2^{[a_2]}}=154 \cdot 15^{a_2-2} \text{ and } 55 \cdot C^{D_{12}}_{2^{[a_2]}}= 0 \text{ or } \frac{55[2+5.2^{a_2}-2]}{6}.$ Clearly for  $a_2 \geq 5 ~ (0;2^{[a_2]})$ is a signature of $PSL_2(\mathbb{F}_{11}).$
\item[Subcase 2.] $a_3 \geq 1, ~ a_2,~a_5,~a_6 =0.$ Now $55.C^{D_{12}}_{3^{[a_3]}}=55.\frac{2^{a_2}[4+2.-1^{a_2}+2^{a_2-1}]}{12}$ and $22. C^{A_5}_{3^{[a_3]}}\leq 22.10. 20^{a_3-2}.$ Now $C^{PSL_2(\mathbb{F}_{11})}_{3^{[a_3]}}\geq 12.110^{a_3-2}.$ Clearly $(0;3^{[a_3]})$ is a signature for $PSL_2(\mathbb{F}_{11})$ whenever $a_3 \geq 4.$
\item[Subcase 3.] $a_5 \geq 1, ~ a_2,~a_3,~a_6 =0,$ and $a_6 \geq 1, ~ a_2,~a_3,~a_5 =0.$ By similar kind of arguement as abobe we can prove that $(0;5^{[a_5]})$ and $(0;6^{[a_6]})$ is a signature for $PSL_2(\mathbb{F}_{11})$ whenever $a_5,a_6\geq 3.$
\item[Subcase 4.] $a_2,~ a_3 \geq 1 \text{ and } a_5,~ a_6=0.$ Now $C_{2^{[3]},3^{[1]}}^{PSL_2(\mathbb{F}_{11})}\geq 15125$ Now $22.C_{2^{[3]},3^{[1]}}^{A_5}= 990,$ and $55.C_{2^{[3]},3^{[1]}}^{D_{12}} \leq 0.$ $C_{2^{[3]},3^{[1]}}^{PSL_2(\mathbb{F}_{11})}\geq 60500$ Now $22.C_{2^{[1]},3^{[3]}}^{A_5}=52800,$ and $55.C_{2^{[1]},3^{[3]}}^{D_{12}} \leq 3520.$ Consider $(0;2^{[2]},3^{[2]})$ where $a_2,~a_3$ are even. As $(0;2,3,11)$ is a signature of $PSL_2(\mathbb{F}_{11}),$ so we have a surface kernel homomorphisom $\phi:\Gamma(0;2,3,11) \rightarrow PSL_2(\mathbb{F}_{11})$ defined as $$\phi(c_2)=e_2,~\phi(c_3)=e_3,~\phi(c_{11})=e_{11}$$ where $$ \Gamma(0;2,3,11)= \langle c_{2}, c_{3}, c_{11} \mid c_{2}^{2} =c_{3}^{3}=c_{11}^{11}=1,c_{2}c_{3}c_{11}=1 \rangle.$$ Now we can define a map $\phi:\Gamma(0;2^{[a_2]},3^{[a_3]}) \rightarrow PSL_2(\mathbb{F}_{11})$ defined as \\ 
 
 $ \phi(c_{2i})=e_2;~ \text{i is odd },\\ 
\phi(c_{2i})=e_2^{-1};~ \text { i is even },\\
\phi(c_{3i})=e_3;~ \text{ i is odd },\\
\phi(c_{3i})=e_3^{-1};~ \text{ i is even } $ where $$\Gamma(0;2^{[a_2]},3^{[a_3]})= \langle c_{21},\cdots, c_{2a_2},c_{31}, \cdots, c_{3a_3}|c_{21}^2= \cdots = c_{2a_2}^2=1=c_{31}^3=$$ $$ \cdots = c_{3a_3}^3=1=c_{21}.\cdots c_{2a_2}c_{31}. \cdots c_{3a_3} \rangle.$$ Clearly $(0;2^{[a_2]},3^{[a_3]})$ is a signature of $PSL_2(\mathbb{F}_{11}).$ Hence by extension principle $(0;2^{[a_2]},3^{[a_3]})$ is a signature of $PSL_2(\mathbb{F}_{11}).$ 
\item[Subcase 5.] $a_2,~a_5 \geq 1$ and $a_5,a_6=0.$ Now $C^{PSL_2(\mathbb{F}_{11})}_{2^{[3]},5}=30250$ and $22.C^{A_5}_{2^{[3]},5}=9900.$ So $(0;2^{[3]},5)$ is a signature of $PSL_2(\mathbb{F}_{11}).$ Similarly $(0;2,5^{[3]})$ is a signature of $PSL_2(\mathbb{F}_{11}).$ As $(0;2,5,11)$ is a signature of $PSL_2(\mathbb{F}_{11}).$ By previous argument $(0;2^{[a_2]},5^{[a_5]})$ is a signature of $PSL_2(\mathbb{F}_{11})$ whenever $a_2,a_5$ are even. Hence $(0;2^{[a_2]},5^{[a_5]})$ is signature of $PSL_2(\mathbb{F}_{11}).$
\item[Subcase 6.] $a_2,a_6 \geq 1$ and $a_3,a_5=0.$ Now $C^{PSL_2(\mathbb{F}_{11})}_{2^{[a_2]},6^{[a_6]}}=55^{a_2-1}.110^{a_6-1}.12 $ and $C^{A_5}_{2^{[a_2]},6^{[a_6]}}\leq 0$ whenever $a_2 \text{ is odd and } a_6 \text{ is even }$.  $C^{PSL_2(\mathbb{F}_{11})}_{2^{[a_2]},6^{[a_6]}}=55^{a_2-1}.110^{a_6-1}.12$ and $C^{A_5}_{2^{[a_2]},6^{[a_6]}}\leq 220.6^{a_2-1}.2^{a_6-1}$ whenever $ a_6$ is odd and $a_2$ is even. Now we consider the case when $a_2,a_6$ is even, then $C^{PSL_2(\mathbb{F}_11)}_{2^{[a_2]},6^{[a_6]}}=55^{a_2-1}.110^{a_6-1}.18$ and $55.C^{A_5}_{2^{[a_2]},6^{[a_6]}}=55.\frac{2^{a_6[4+2^{a_2+1}+2^{a_2+a_6-1}]}}{12}$ or $.220.6^{a_2-1}2^{a_6-1}.$ If $a_2,~a_6$ are both odd then $C^{PSL_2(\mathbb{F}_11)}_{2^{[a_2]},6^{[a_6]}}=55^{a_2}.110^{a_6-2}.20$ and $55.C^{A_5}_{2^{[a_2]},6^{[a_6]}}=55.\frac{2^{a_6}[4-2^{a_2+1}+2^{a_2+a_6-1}]}{12}$ or $55.\frac{2^{a_6}[4-2^{a_2+1}+2^{a_2+a_6-1}]}{12}.$ So $(0;2^{[a_2]},6^{[a_6]})$ is signature of $PSL_2(\mathbb{F}_{11})$ whenever $a_2+a_6 \geq 3.$ 
\item[Subcase 7.] We will consider the case $a_3,a_5 \geq 1$ and $a_2,a_6=0.$ Now we have $C^{PSL_2(\mathbb{F}_{11})}_{3^{[2]},5^{[1]}}= 2640=C^{PSL_2(\mathbb{F}_{11})}_{3^{[1]},5^{[2]}}$ and $22.C^{A_5}_{3^{[2]},5^{[1]}}=1320=22.C^{A_5}_{3^{[1]},5^{[2]}}$. By extension theorem we can easily say that $(0;3^{[a_3]},5^{[a_5]})$ is a signature of $PSL_2(\mathbb{F}_{11}).$
\item[Subcase 8.] Now consider $a_3,a_6 \geq 1$ and $a_2,a_5=0.$ Now we have $C^{PSL_2(\mathbb{F}_{11})}_{3^{[a_3]},6^{[a_6]}}\geq 8.110^{a_3+a_2-2}$ and $55.C^{D_{12}}_{3^{[a_3]},6^{[a_6]}} \leq 5.2^{a_3+a_6}[6+2^{a_3+a_6-1}].$ Hence $(0;3^{[a_3]},6^{[a_6]})$ is a signature of $PSL_2(\mathbb{F}_{11})$ whenever $a_3+a_6 \geq 3.$
\item[Subcase 9.] Now we consider $a_5,a_6 \geq 1$ and $a_2,a_3=0.$ As $C^{PSL_2(\mathbb{F}_{11})}_{5^{[a_5]},6^{[a_6]}}=22.132^{a_5-1}.110^{a_6-1}.$ As there is no maximal subgroup of $PSL_2(\mathbb{F}_{11})$ containing element of orders $5 \text{ and } 6$ both. Hence $(0;5^{[a_5]},6^{[a_6]})$ is signature of $PSL_2(\mathbb{F}_{11})$ whenever $a_5+a_6 \geq 3.$
\item[Subcase 10.] Consider the case when $a_2,a_3,a_5 \geq 1, a_6=0.$ We can easily compute that $C_{2^{[a_2]},3^{[a_3]},5^{[a_5]}}^{PSL_2(\mathbb{F}_{11})}=20.55^{a_2}110^{a_3-1}132^{a_5-1}.$ The only maximal subgroup which contain elements of orders $2,3,5$ is $A_5$. Now  $C_{2^{[a_2]},3^{[a_3]},5^{[a_5]}}^{A_5}=22.15^{a_2}20^{a_3}12^{a_5}.$ Hence $(0;2^{[a_2]},3^{[a_3]},5^{[a_5]})$ is a signature of $PSL_2(\mathbb{F}_{11})$ whenever $165a_2+220a_3+264a_5-659 \geq 26.$
\item[Subcase 11.] Now we consider the case $a_2,a_3,a_6 \geq 1$ and $a_5=0.$ We have $C^{PSL_2(\mathbb{F}_{11})}_{2^{[a_2]},3^{[a_3]},6^{[a_6]}} \geq 55^{a_2-1}110^{a_3+a_6-1}.9$, and $55.C^{D_{12}}_{2^{[a_2]},3^{[a_3]},6^{[a_6]}} \leq 55.\frac{2^{a_3+a_6}[4+2^{a_2+1}+2^{a_2+a_3+a_6}]}{12} ~ \text{ or} ~ \\ 220.6^{a_2-1}.2^{a_3+a_6-1}$. Hence $(0;2^{[a_2]},3^{[a_3]},6^{[a_6]})$ is a signature of $PSL_2(\mathbb{F}_{11})$ a signature whenever $$165a_2+220a_3+275a_6-659 \geq 26.$$
\item[Subcase 12.] $a_3,a_5,a_6 \geq 1$ and $a_2=0.$ We have $$C^{PSL_2(\mathbb{F}_{11})}_{3^{[a_3]},5^{[a_5]},6^{[a_6]}} \geq 20. 110^{a_3+a_6-1}.132^{a_5-1}.$$ If possible let $(0;3^{[a_3]},5^{[a_5]},6^{[a_6]})$ is not signature whenever $220a_3+264a_5+275a_6-659 \geq 26$ then the only maximal subgroup which contain elements order of $6$ is $D_{12},$ but $5 \nmid 12,$ then it will lead to a contradiction. So $(0;3^{[a_3]},5^{[a_5]},6^{[a_6]})$ is a signature whenever $220a_3+264a_5+275a_6-659 \geq 26.$ 
\item[Subcase 13.] $a_2,a_5,a_6 \geq 1$ and $a_3=0.$ We have $$C^{PSL_2(\mathbb{F}_{11})}_{2^{[a_2]},5^{[a_5]},6^{[a_6]}} \geq 20.55^{a_2}. 110^{a_6-1}.132^{a_5-1}.$$ If possible let $(0;2^{[a_2]},5^{[a_5]},6^{[a_6]})$ is not signature whenever $220a_3+264a_5+275a_6-659 \geq 26$ then the only maximal subgroup which contain elements order $6$ is $D_{12},$ but $5 \nmid 12,$ then it will lead to a contradiction. So $(0;2^{[a_2]},5^{[a_5]},6^{[a_6]})$ is a signature whenever $165a_2+264a_5+275a_6-659 \geq 26.$ 
\end{itemize}
\end{itemize}

\end{proof}
\noindent The following result recites a relationship with an element g of a group $G$ and a commutator $[x,y]$ of the group $G$. The statement is as follows: \\
Let $G$ be a finite group and $g \in G.$ For a fix $x \in G,$ $g$ is conjugate to $[x,y]$ for some $y \in G$ if and only if $\displaystyle\sum_{\chi \in Irr(G)} \frac{|\chi(x)|^2. \overline{\chi(g)}}{\chi(1)} \neq 0$ \cite{char}.

The above result will play a crucial role to prove the next lemma.

\begin{lemma} \label{2}
$(1;2^{[a_2]},3^{[a_3]},5^{[a_5]},6^{[a_6]},11^{[a_{11}]})$ is signature of $PSL_2(\mathbb{F}_{11})$ if and only if atleast one $a_i \geq 1$ for $i=2,3,5,6,11.$
\end{lemma}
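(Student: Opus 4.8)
The plan is to argue the two implications separately, with the forward direction being immediate and the converse carrying all the weight. For the forward direction, suppose every $a_i = 0$, so the candidate signature is $(1;-)$ and $\Gamma(1;-) = \langle \alpha_1,\beta_1 \mid [\alpha_1,\beta_1] = 1\rangle \cong \mathbb{Z}^2$. A surface-kernel epimorphism onto $PSL_2(\mathbb{F}_{11})$ would exhibit this simple nonabelian group as a quotient of an abelian group, which is impossible; equivalently the Riemann--Hurwitz formula forces $g = 1$ and $PSL_2(\mathbb{F}_{11})$ cannot act effectively on the torus. Hence at least one $a_i \geq 1$ is necessary.

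For the converse I would reduce to a finite list of base cases by two propagation mechanisms. First, whenever the period multiset $(2^{[a_2]},3^{[a_3]},5^{[a_5]},6^{[a_6]},11^{[a_{11}]})$ already satisfies the genus-$0$ realizability bound of Lemma~\ref{1}, I lift the genus-$0$ epimorphism to genus $1$: given $\phi\colon \Gamma(0;P)\to PSL_2(\mathbb{F}_{11})$ with $\phi(c_1)\cdots\phi(c_r)=1$, define $\phi'\colon \Gamma(1;P)\to PSL_2(\mathbb{F}_{11})$ by keeping the elliptic images $\phi'(c_i)=\phi(c_i)$ and sending the handle generators to commuting elements (for instance $\phi'(\alpha_1)=\phi'(\beta_1)=A$), so that $[\phi'(\alpha_1),\phi'(\beta_1)]=1$ and the long relation is preserved; surjectivity is inherited from $\phi$. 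This dispatches every genus-$1$ signature whose periods meet the genus-$0$ threshold, leaving only the finitely many small signatures below it.

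Second, for the remaining small signatures I would produce explicit base cases and then apply the extension principle. For each single period $m\in\{2,3,5,6,11\}$ I construct $\phi\colon \Gamma(1;m)\to PSL_2(\mathbb{F}_{11})$, which amounts to finding a generating pair $(A,B)$ with $[A,B]^{-1}=C$ of order $m$; here the commutator--character criterion stated just above the lemma is the key existence tool, since it certifies that an element of the desired order is a commutator $[A,\,\cdot\,]$ by checking that the associated character sum is nonzero, after which generation is verified by the maximal-subgroup and class-multiplication-coefficient comparisons already used in Lemma~\ref{1} (the maximal subgroups $A_5$, $\mathbb{Z}_{11}\rtimes\mathbb{Z}_5$, $D_{12}$ never contain the relevant orders simultaneously). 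With the base cases in hand, the extension principle duplicates any period of prime order $2,3,5,11$, while mixed small signatures and increases in $a_6$ are handled by direct constructions that absorb the extra elliptic factors into the handle commutator $[\alpha_1,\beta_1]$, exactly as done for $(1;2,4,4)$ and similar cases in the proof of Lemma~\ref{lemma2}.

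The main obstacle is the genus-$1$ handle itself: unlike the genus-$0$ setting, the elliptic images must multiply to a prescribed commutator $[A,B]^{-1}$ rather than to the identity, so for each base case I must realize an element of the required order as a commutator of a pair that \emph{simultaneously} generates $PSL_2(\mathbb{F}_{11})$. The character sum yields existence but not generation, and the two must be reconciled by subgroup analysis. The most delicate points are $(1;2)$, which has no exception here (in contrast to $(1;2)$ for $PSL_2(\mathbb{F}_7)$ in Lemma~\ref{lemma2}) and therefore demands an explicit involution commutator coming from a generating pair, and the propagation in $a_6$: since $6$ is composite the extension principle does not apply directly, so enlarging the number of order-$6$ periods requires either a direct decomposition of an order-$6$ element as a product of two order-$6$ elements or a lift from a genus-$0$ signature once $a_6$ is large enough.
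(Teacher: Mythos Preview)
Your proposal is essentially correct and follows the same architecture as the paper: the forward direction via the abelianness of $\Gamma(1;-)$, the base cases $(1;m)$ via the commutator--character criterion combined with maximal-subgroup generation checks, and propagation to larger signatures via the extension principle and lifts from genus~$0$.

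Two differences are worth flagging. First, for the two-period base cases $(1;m,n)$ the paper does not lift from a genus-$0$ signature with the same period multiset (which typically falls below the threshold of Lemma~\ref{1}); instead it starts from a genus-$0$ \emph{triple} $(0;m,n,11)$, invokes perfectness of $PSL_2(\mathbb{F}_{11})$ to write the order-$11$ image $e_{111}$ as a commutator $[a,b]$, and then absorbs that factor into the handle. This is a cleaner device than your plan of ad hoc ``absorbing extra elliptic factors into $[\alpha_1,\beta_1]$'', and it simultaneously sidesteps the $a_6$-propagation issue you raised, since $(1;6,6)$ is obtained from $(0;6,6,11)$ in the same way. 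Second, for the single case $(1;3)$ the paper abandons the character-sum route and resorts to a direct GAP enumeration to confirm a generating pair with commutator of order~$3$; your proposal implicitly assumes the character criterion plus maximal-subgroup analysis will dispatch every $(1;m)$, so be prepared that for $m=3$ a convenient choice of the fixed class for $x$ that forces generation may not be available and a computational check may be needed.
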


\begin{proof}

\begin{itemize}
\item[Case 1.] First we consider $g=2,~ x=11.$ Then we have $\displaystyle\sum_{\chi \in Irr(G)} \frac{|\chi(11)|^2. \overline{\chi(2)}}{\chi(1)} = \frac{11}{5}.$ Hence $(1;2)$ is a signature of $PSL_2(\mathbb{F}_{11})$ as only maximal sunbgroup of $PSL_2(\mathbb{F}_{11})$ containing element of order $11$ is $\mathbb{Z}_{11} \rtimes \mathbb{Z}_5$ but $2 \nmid 55.$
\item[Case 2.] Now we consider $g=5,~ x=6.$ Then we have $\displaystyle\sum_{\chi \in Irr(G)} \frac{|\chi(6)|^2. \overline{\chi(5)}}{\chi(1)} = \frac{12}{11}.$ Hence $(1;5)$ is a signature of $PSL_2(\mathbb{F}_{11})$ as only maximal sunbgroup of $PSL_2(\mathbb{F}_{11})$ containg element of order $6$ is $D_{12}$ but $5 \nmid 12.$ 

By similar kind of argument we can prove that $(1;6)$ and $(1;11)$ are a signature of $PSL_2(\mathbb{F})_{11}$ considering $g=6,~ x=5$ and $g=11,~x=6$ respectively. We have $$\displaystyle\sum_{\chi \in Irr(G)} \frac{|\chi(5)|^2. \overline{\chi(6)}}{\chi(1)} = \frac{10}{11},$$ and $$\displaystyle\sum_{\chi \in Irr(G)} \frac{|\chi(6)|^2. \overline{\chi(11)}}{\chi(1)} = \frac{3}{15}.$$
\item[Case 3.] We will discuss the case $a_3 \geq 1$ and $a_2,~a_5,~a_6 =0.$ For this particular we use GAP and did run the following code:

G := SmallGroup(660,13);\\
A := [];;\\
for x in G do\\
for y in G do\\
if Order( Comm( x,y ))=3\\
then\\
AddSet( A,[x,y] );\\
fi; od; od;\\
Size( A );\\
Using above code we get we get $|A|=51480.$ \\
To prove that $(1;3)$ is a signature we run the followin code in GAP \\
B := [];;\\
for a in A do\\
if G = SubGroup( G,a )\\
then\\
AddSet( B,a );\\
Size( B );\\
fi;od;\\  
We get $|B|=23760.$ \\
So $(1;3)$ is also a signature. By extension principle $(1;3^{[a_3]})$ is a signature of $PSL_2(\mathbb{F}_{11}).$
\end{itemize}

\noindent Now consider the $(1;2,3)$. As $(0;2,3,11)$ is a signature, so there exists a surface kernel epimorphisom $\phi ~ : ~ \Gamma (0;2,3,11) ~ \rightarrow ~ PSL_2(\mathbb{F}_{11})$ such that $\phi(c_{21})=e_{21},~ \phi(c_{31})=e_{31}, ~ \phi(c_{111})=e_{111}$ where $\Gamma(0;2,3,11)= ~ \langle c_{21},c_{31},c_{111}|c_{21}c_{31}c_{111}=1, ~ |c_{21}|=2,~ |c_{31}|=3,~ |c_{111}|=11 \rangle$. As $PSL_2(\mathbb{F}_{11})$ is non abelian, simple group. So the commutator subgroup $[PSL_2(\mathbb{F}_{11}),~ PSL_2(\mathbb{F}_{11})]=~ PSL_2(\mathbb{F}_{11})$. So $\exists ~ a,b ~ \in ~ PSL_2(\mathbb{F}_{11})$ such that $[a,b]= ~ e_{111}$. Now we can define a map $\psi ~ : \Gamma(1;2,3) ~ \rightarrow ~ PSL_2(\mathbb{F}_{11})$ as $$\psi(\alpha_1)~ = ~ a, ~ \psi(\beta_1)~ = ~ b, ~ \psi(c_{21}) ~ = ~ e_{21}, ~ \psi(c_{31}) ~ = ~ e_{31}$$ where $\Gamma(1;2,3)=\langle \alpha_1,\beta_1,c_{21},c_{31}| ~ [\alpha_1,\beta_1]c_{21}c_{31}=1, ~ |c_{21}|=2, ~ |c_{31}|=3\rangle$. Clearly $\psi$ is surface kernel epimorphisom. Hence $(1;2,3)$ is a signature of $PSL_2(\mathbb{F}_{11})$. Similarly we can prove that $(1;2,5),~ (1;2,6),~ (1;2,11),~ (1;3,5),~ (1;3,6),(1;3,11),(1;5,6),(1;5,11),(1;6,11),\\(1;11,11) $ are all signatures of $PSL_2(\mathbb{F}_{11})$.

Now using the previous $Lemma~ 4.2$ it is clear that $(1;2^{[a_2]},3^{[a_3]},5^{[a_5]},6^{[a_6]},11^{[a_{11}]})$ is a signature of $PSL_2(\mathbb{F}_{11})$ if and only if atleast one $a_i \geq 1$ for $i=2,3,5,6,11.$
\end{proof}

\begin{lemma}\label{3}
$(h;2^{[a_2]},3^{[a_3]},5^{[a_5]},6^{[a_6]},11^{[a_{11}]})_{h \geq 2}$ is a signature of $PSL_2(\mathbb{F}_{11})$ if and only if $a_i \geq 0$ for $i=2,~3,~5,~6,~11.$
\end{lemma}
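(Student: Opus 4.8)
The plan is to read the statement through the Harvey condition (Lemma~\ref{Harvey condition}) and to build the required surface-kernel epimorphism in two cases, leaning on the already-established $h=1$ result (Lemma~\ref{2}) for all but one of them. The ``only if'' direction is vacuous: in any signature the period multiplicities $a_i$ are non-negative by definition. For the ``if'' direction, first I would dispose of the first Harvey condition. For $(h;2^{[a_2]},3^{[a_3]},5^{[a_5]},6^{[a_6]},11^{[a_{11}]})$ the Riemann--Hurwitz genus is
$$ g=1+660(h-1)+165a_2+220a_3+264a_5+275a_6+300a_{11}, $$
whose coefficients are all integers, so $g\in\mathbb{Z}$ automatically; and for $h\ge 2$ we have $g\ge 661>26=\mu(PSL_2(\mathbb{F}_{11}))$, so no minimum-genus obstruction arises. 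Hence it remains only to produce, for every choice of $a_i\ge 0$, a surjection $\Gamma(h;\dots)\to PSL_2(\mathbb{F}_{11})$ preserving the orders of all torsion generators.

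\textbf{Case 1: all $a_i=0$.} Here $\Gamma(h;-)=\langle\alpha_1,\beta_1,\dots,\alpha_h,\beta_h\mid\prod_{i=1}^h[\alpha_i,\beta_i]=1\rangle$ is the genus-$h$ surface group. Since $PSL_2(\mathbb{F}_{11})$ is two-generated, I would fix generators $A,B$ and set $\phi(\alpha_1)=A$, $\phi(\alpha_2)=B$, $\phi(\beta_i)=1$ for all $i$, and $\phi(\alpha_i)=1$ for $3\le i\le h$. Every commutator $[\alpha_i,\beta_i]$ then maps to the identity, so the long relation is respected, while $\operatorname{Im}\phi=\langle A,B\rangle=PSL_2(\mathbb{F}_{11})$. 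As there are no torsion generators, $\phi$ is a surface-kernel epimorphism, so $(h;-)$ is a signature for every $h\ge 2$. This is precisely the case that is new at $h\ge 2$: at $h=1$ the genus-one surface group is abelian and cannot surject onto the non-abelian simple group $PSL_2(\mathbb{F}_{11})$, which is exactly why Lemma~\ref{2} required at least one $a_i\ge 1$.

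\textbf{Case 2: some $a_i\ge 1$.} By Lemma~\ref{2} the tuple $(1;2^{[a_2]},3^{[a_3]},5^{[a_5]},6^{[a_6]},11^{[a_{11}]})$ is already a signature, so there is a surface-kernel epimorphism $\phi_1:\Gamma(1;\dots)\to PSL_2(\mathbb{F}_{11})$. I would extend it to $\Gamma(h;\dots)$ by keeping $\phi_1$ on $\alpha_1,\beta_1$ and on every elliptic generator $c_{ij}$, and sending each new handle generator $\alpha_2,\beta_2,\dots,\alpha_h,\beta_h$ to the identity. Then $[\alpha_i,\beta_i]=1$ for $2\le i\le h$, so the relation word $\prod_{i=1}^h[\alpha_i,\beta_i]\prod_j c_j$ collapses to the one already satisfied under $\phi_1$; the extended map is still surjective because $\phi_1$ was, and the orders of the elliptic generators are unchanged. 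Hence it is a surface-kernel epimorphism and $(h;\dots)$ is a signature.

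I do not expect a genuine obstacle here. All of the arithmetic (the class-multiplication counts and the GAP computations) and the delicate single-period constructions were already carried out for $h=0$ and $h=1$ in Lemmas~\ref{1} and~\ref{2}, so Lemma~\ref{3} is essentially their stabilization under adjoining handles. The only points needing care are the two routine verifications above --- that $PSL_2(\mathbb{F}_{11})$ is two-generated (standard for $PSL_2$ over a prime field) and that adjoining trivial handles preserves both surjectivity and torsion-order preservation --- together with the observation that for $h\ge 2$ the Riemann--Hurwitz genus is automatically an admissible integer exceeding $26$.
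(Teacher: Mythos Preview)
Your proposal is correct and follows essentially the same approach as the paper: handle the period-free case $(h;-)$ for $h\ge 2$ by sending $\alpha_1,\alpha_2$ to a pair of generators of $PSL_2(\mathbb{F}_{11})$ and all $\beta_i$ (and remaining $\alpha_i$) to the identity, then obtain the remaining cases by extending the $h=1$ surface-kernel epimorphisms of Lemma~\ref{2} with extra trivial handles. The only cosmetic difference is that the paper picks its two generators concretely as $e_{21},e_{31}$ coming from the $(0;2,3,11)$ signature, whereas you invoke two-generation of $PSL_2(\mathbb{F}_{11})$ abstractly.
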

\begin{proof}
We only have to prove the case $(h_{\geq ~ 2};-)$. We already see $(0;2,3,11)$ is a signature of $PSL_2(\mathbb{F}_{11})$. So we have a surface kernel epimorphism $$\phi:~ \Gamma(0;2,3,11) ~ \rightarrow ~ PSL_2(\mathbb{F}_{11}) ~ \text{defined as} $$ $$\phi(c_{21})=e_{21}~ \phi(c_{31})=e_{31} ~ \text{and} ~ \phi(c_{111})=e_{111}$$ where $\Gamma(0;2,3,11)=\langle c_{21},c_{31},c_{111} \mid c_{21} c_{31} c_{111}=1, c_{21}^{2}=c_{31}^{3}=c_{111}^{11}=1 \rangle$. Consider a mapping $$\overline{\phi}:\Gamma(h_{\geq 2};-) ~ \rightarrow ~ PSL_2(\mathbb{F}_{11})$$ defined as $\overline{\phi}(\alpha_1)=e_{21} ~ \overline{\phi}(\beta_1)=1$ $$\overline{\phi}(\alpha_2)=e_{31} ~ \overline{\phi}(\beta_2)=1$$ $\overline{\phi}(\alpha_i)= 1 ~ \overline{\phi}(\beta_i)=1$. Then $\overline{\phi}:\Gamma(h_{\geq 2};-) ~ \rightarrow ~ PSL_2(\mathbb{F}_{11})$ is a surface kernel epimorphism. So $(h_{\geq 2};-)$ is a signature of $PSL_2(\mathbb{F}_{11})$.
Now using $Lemma ~4.1$ and $Lemma ~4.2$ we can prove the other remaing cases easily.
\end{proof}

\begin{proof}[Proof of Theorem \ref{main theorem 2}] The proof follows from \ref{1}, \ref{2}, \ref{3}.
\end{proof}

\begin{corollary}
There are no irreducible character $PSL_2(\mathbb{F}_{11})$ that come from the Riemann surfaces.
\end{corollary}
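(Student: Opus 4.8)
The plan is to exploit the fact that a character ``coming from a Riemann surface'' has its degree rigidly pinned down by the genus. Recall that if $G = PSL_2(\mathbb{F}_{11})$ acts faithfully and holomorphically on a compact Riemann surface $X$ of genus $g \geq 2$, then the character $\chi$ produced by the Eichler Trace Formula is exactly the character of the $G$-representation on the space of holomorphic differentials $\mathcal{H}^{1}(X)$. Since $\dim_{\mathbb{C}} \mathcal{H}^{1}(X) = g$, this forces $\chi(1) = g$. Thus every character arising from a Riemann surface has degree equal to the genus of that surface, and the whole question reduces to a comparison of integers.

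First I would record the degrees of the irreducible characters of $PSL_2(\mathbb{F}_{11})$. From its character table (see \cite{atlas}), the group of order $660$ has eight irreducible characters, of degrees $1, 5, 5, 10, 10, 11, 12, 12$; a quick check gives $1+25+25+100+100+121+144+144 = 660$, so this is the complete list, and in particular every irreducible character has degree at most $12$. Next I would invoke the minimum-genus bound: by the discussion of Section~3 (equivalently, as a consequence of Theorem~\ref{main theorem 2}) the minimum genus of $PSL_2(\mathbb{F}_{11})$ is $\mu = 26$, so every faithful orientation-preserving action occurs on a surface of genus $g \geq 26$. Hence the analytic character $\chi$ attached to any such action satisfies $\chi(1) = g \geq 26$.

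Combining the two observations closes the argument. If some $\psi \in \mathrm{Irr}(PSL_2(\mathbb{F}_{11}))$ came from a Riemann surface $X$ of genus $g$, then $\psi(1) = \chi_{X}(1) = g \geq 26$, which contradicts $\psi(1) \leq 12$. Therefore no irreducible character of $PSL_2(\mathbb{F}_{11})$ can arise from a Riemann surface.

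I do not expect a genuine obstacle here, and this is exactly the place where $PSL_2(\mathbb{F}_{11})$ behaves differently from $PSL_2(\mathbb{F}_7)$: for the latter $\mu = 3$ is small enough that the irreducible degrees $3$ and $8$ lie at or above the minimum genus, so one must actually run the Eichler Trace Formula to determine which characters are realized (yielding $\chi_2$ and $\chi_6$), whereas for $PSL_2(\mathbb{F}_{11})$ the minimum genus $26$ already exceeds every irreducible degree and the degree count alone is decisive. The only point deserving care is the justification that the Eichler character is the full holomorphic character of degree $g$ rather than a proper sub-representation, but this is precisely the content of the setup preceding the Eichler Trace Formula, so it may be quoted directly.
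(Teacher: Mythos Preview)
Your argument is correct and is exactly the natural one: the paper states the corollary without proof, but since it is placed immediately after Theorem~\ref{main theorem 2} (whose content includes that the minimum genus is $26$) and the Eichler setup identifies the surface character with the $g$-dimensional representation on $\mathcal{H}^{1}(X)$, the intended reasoning is precisely the degree comparison you carry out. Your contrast with the $PSL_2(\mathbb{F}_7)$ case, where $\mu=3$ does not exceed all irreducible degrees and one must actually evaluate the Eichler trace for the small signatures, is also on point.
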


\begin{corollary}
The stable upper genus of $PSL_2(\mathbb{F}_{11})$ is $3508$.
\end{corollary}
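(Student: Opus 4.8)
The plan is to follow the template used above for $PSL_2(\mathbb{F}_7)$. By Theorem \ref{main theorem 2}, the genus spectrum $SP(PSL_2(\mathbb{F}_{11}))$ consists exactly of the integers $g \geq 26$ expressible as
$$g = 1 + 660(h-1) + 165a_2 + 220a_3 + 264a_5 + 275a_6 + 300a_{11}$$
for non-negative integers $h, a_2, a_3, a_5, a_6, a_{11}$, the coefficients being the Riemann--Hurwitz contributions $\tfrac{660}{2}(1-\tfrac1m)$ for $m = 2,3,5,6,11$. Since Theorem \ref{main theorem 2} carries no exceptional signature, the spectrum is cofinite (the generators $165,220,264,275,300$ have $\gcd 1$), so the stable upper genus equals one more than the largest integer $g\ge 26$ admitting no such representation. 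Hence the claim reduces to the two assertions: \emph{$3507$ is not representable}, and \emph{every $g \geq 3508$ is representable}.

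First I would verify that $3507 \notin SP(PSL_2(\mathbb{F}_{11}))$. The search is finite: $1 + 660(h-1) \leq 3507$ forces $0 \leq h \leq 6$, and since the smallest coefficient is $165$, each $a_i$ is bounded (with the loosest bounds occurring at $h=0$, where the combination must equal $4166$, giving $a_2 \leq 25$, $a_3 \leq 18$, $a_5 \leq 16$, $a_6 \leq 15$, $a_{11} \leq 14$). Running the analogue of the earlier \texttt{Python} routine, with nested loops over these ranges testing whether the right-hand side equals $3507$, confirms that no admissible tuple yields $3507$; therefore $PSL_2(\mathbb{F}_{11})$ cannot act on $\Sigma_{3507}$. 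Note that the $660(h-1)$ term contributes multiples of $660 = 4\cdot 165$, so for $h \geq 1$ it only adds order-$2$ periods and the genuinely distinct offsets are those of $h=0$ and $h=1$, both covered by the search.

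Next I would exhibit, for each $g$ with $3508 \leq g \leq 3672$, an explicit solution of the displayed equation, producing a table of $165$ consecutive entries (again generated by the same search). Then I invoke the shift argument: because $165$ is the smallest coefficient, replacing $a_2$ by $a_2 + 1$ carries a signature for genus $g$ to one for $g + 165$, and the new tuple is still a signature since $g + 165 \geq 26$. Writing an arbitrary $g \geq 3508$ as $g = (3508 + r) + 165k$ with $0 \leq r \leq 164$ and $k \geq 0$, the window entry for $3508 + r$ together with $k$ additional order-$2$ periods realizes $g$. Thus every $g \geq 3508$ lies in the spectrum while $3507$ does not, which is precisely the statement that $3508$ is the stable upper genus.

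The main obstacle is pinning down the \emph{exact} value: the non-representability of $3507$ is a Frobenius-type question for the generating set $\{165, 220, 264, 275, 300\}$ (modified by the offset from $660(h-1)$), and the only fully rigorous route I see is the bounded exhaustive check above. By contrast, the construction of the $165$-term window table is routine though lengthy, and the final shift step is immediate.
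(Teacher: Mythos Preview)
Your proposal is correct and follows essentially the same strategy as the paper: a bounded exhaustive search to rule out $3507$, an explicit window of consecutive genera starting at $3508$, and the $+165$ shift (adding one order-$2$ period) to cover all larger $g$. The only cosmetic differences are that your bounds on $h$ and the $a_i$ are slightly tighter and your window has $165$ entries ($3508$--$3672$) versus the paper's $3508$--$3673$; both are sufficient.
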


\noindent To prove our claim that the stable upper genus of $PSL_2(\mathbb{F}_{11})$ is $3508$, we first have to show that $PSL_2(\mathbb{F}_{11})$ can not act on a compact, connected, orientable surface of genus $3507$ preserving the orientation. If possible let $$ 3507 ~ = ~ 1+660(h-1) + 165a_2 + 220a_3 + 264a_5 + 275a_6+300a_{11}.$$ Then the value of $h$ could be at most $8$. Similarly the values of $a_i$ could be at most $27,~ 20,~ 17,~ 17,~15$ for $i= ~ 2,~ 3,~ 5,~ 6,~11$ respectively. So We will consider  $$0 ~ \leq ~ h ~ \leq ~8$$ $$0 ~ \leq ~ a_2 ~ \leq ~ 27$$ $$0 ~ \leq ~ a_3 ~ \leq ~ 20$$ $$0 ~ \leq ~ a_5 ~ \leq ~ 17$$ $$0 ~ \leq ~ a_6 ~ \leq ~ 17$$ $$0 ~ \leq ~ a_{11} ~ \leq ~ 15.$$

\noindent We execute the following python code to conclude that $PSL_2(\mathbb{F}_{11})$ can not act on a compact, connected, orientable surface of genus $3507$ preserving the orientation.

\lstset{language=Python}
\lstset{frame=lines}
\lstset{caption={$3507$ is not an admissable signature of $PSL_2(\mathbb{F}_{11})$}}
\lstset{label={2nd:code_direct}}
\lstset{basicstyle=\footnotesize}
\begin{lstlisting}
def func2(h,a2,a3,a5,a6,a11):
   return 1+660*(h-1) + 165*a2 + 220*a3 + 264*a5 + 275*a6 + 300*a11



for h in range(8):
   for a2 in range(27):
     for a3 in range(20):
       for a5 in range(17):
         for a6 in range(17):
           for a11 in range(15):
            sol = func2(h,a2,a3,a5,a6,a11)
            if sol >3500:
              if sol < 3550:
                if sol == 3507:
                  print(sol)
                  print(h, a2, a3, a5, a6, a11)
                  
                 


\end{lstlisting}

\noindent To complete the proof of our claim, we have to find out all signatures corresponding to the genus values $3508-3673$ of $PSL_2(\mathbb{F}_{11})$. We execute the following python code to compute all the signature values of $PSL_2(\mathbb{F}_{11})$ corresponding to the genus values $3508-3673$. 

\lstset{language=Python}
\lstset{frame=lines}
\lstset{caption={Signatures of $PSL_2(\mathbb{F}_{11})$} corresponding to the genus value $3508-3673$}
\lstset{label={3rd:code_direct}}
\lstset{basicstyle=\footnotesize}
\begin{lstlisting}
def func2(h,a2,a3,a5,a6,a11):
   return 1+660*(h-1) + 165*a2 + 220*a3 + 264*a5 + 275*a6 + 300*a11


sol_arr = []
const_arr = []
for h in range(10):
   for a2 in range(15):
     for a3 in range(15):
       for a5 in range(15):
         for a6 in range(15):
           for a11 in range(15):
            sol = func2(h,a2,a3,a5,a6,a11)
            if sol >3507:
              if sol < 3674:
               #print(sol)
               sol_arr += [sol]
               const_arr += [[h,a2,a3,a5,a6,a11]]



color_dictionary = dict(zip(sol_arr, const_arr))

sort_orders = sorted(color_dictionary.items(), key=lambda x: x[0])

for i in sort_orders:
	print(i[0], i[1])

\end{lstlisting} 

\noindent Now we have to prove that $PSL_2(\mathbb{F}_{11})$ can act on all compact, connected, orientable surface of genus $g ~ \geq ~ 3674$ preserving the orientation. Let $g ~ \geq 3674$, and $\Sigma_{g}$ be a compact, connected, orientable surface of genus $g$. So we have $$  g-3508 ~ \equiv ~ s ~ (mod ~165) ~ \text{ where } ~1 ~ \leq ~ s ~ \leq 164.$$ Then $g ~ = ~ l+n.165$ where $ l ~= 3508+ s$. We know the signature corresponding to the genus $l$ as $3508~\leq l~ \leq 3673$ and let it be $(h;m_2,~m_3,~m_5,~m_6,m_{11})$. Then the signature corresponding to the genus $g$ is $(h;m_2+n,~m_3,~m_5,~m_6,m_{11})$. In this way we can find signature corresponding to genus $g ~ \geq 3674$. This completes the proof of our claim.

\begin{corollary}
For a signature $(h;2^{[a_2]},3^{[a_3]},5^{[a_5]},6^{[a_6]},11^{[a_{11}]})$ we have a branched covering $\Sigma_g$ of $\Sigma_h$ with $Deck(\Sigma_g/\Sigma_h)=PSL_2(\mathbb{F}_{11})$ where $g=1+660(h-1)+165.a_2+220.a_3+264.a_5+275.a_6+300.a_{11
}$ and $g \geq ~ 26$. So for every surface $\Sigma_g$, we will have a unique irreducible polynomial $F(w)$ over $\mathcal{M}(\Sigma_h)$ using $Theorem 2.6$ such that $\mathcal{M}(\Sigma_g) ~ \cong ~ \mathcal{M}(\Sigma_h)(F(w))$.  
\end{corollary}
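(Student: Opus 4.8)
The plan is to chain together the main theorem of this section, Harvey's realization condition, and the Galois-theoretic correspondence recorded as Theorem 2.6. First I would observe that the hypothesis $g = 1 + 660(h-1) + 165a_2 + 220a_3 + 264a_5 + 275a_6 + 300a_{11} \geq 26$ is precisely the numerical inequality appearing in Theorem \ref{main theorem 2}; hence $(h;2^{[a_2]},3^{[a_3]},5^{[a_5]},6^{[a_6]},11^{[a_{11}]})$ is a genuine signature of $PSL_2(\mathbb{F}_{11})$. By the Harvey condition (Lemma \ref{Harvey condition}) this signature is realized by a surface-kernel epimorphism $\phi:\Gamma(\sigma)\to PSL_2(\mathbb{F}_{11})$, which is equivalent to a faithful, orientation-preserving, conformal action of $PSL_2(\mathbb{F}_{11})$ on a compact Riemann surface $\Sigma_g$ whose orbit genus is exactly the $g$ above and whose orbit space is $\Sigma_h$.

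Next I would set $p:\Sigma_g\to\Sigma_h=\Sigma_g/PSL_2(\mathbb{F}_{11})$ to be the resulting quotient map. By the structure of the associated Fuchsian group (each non-identity torsion element has a unique fixed point and every infinite-order element acts freely), the action is free away from the finitely many points lying over the branch locus, so $p$ is a branched Galois covering whose deck transformation group is precisely $PSL_2(\mathbb{F}_{11})$; that is, $Deck(\Sigma_g/\Sigma_h)=PSL_2(\mathbb{F}_{11})$. This already furnishes the covering asserted in the first sentence of the corollary.

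To reach the field-theoretic conclusion I would invoke the correspondence of Theorem 2.6, in the form valid for an arbitrary compact base $\Sigma_h$ rather than only $\widehat{\mathbb{C}}$: a branched Galois covering of $\Sigma_h$ with deck group $G$ corresponds to a Galois extension $\mathcal{M}(\Sigma_g)/\mathcal{M}(\Sigma_h)$ with $\mathrm{Gal}(\mathcal{M}(\Sigma_g)/\mathcal{M}(\Sigma_h))\cong G$, under which $\mathcal{M}(\Sigma_g)$ is the meromorphic function field of the covering surface. Since $\mathcal{M}(\Sigma_h)$ has characteristic zero, the extension is finite and separable, so by the primitive element theorem there is a single $w\in\mathcal{M}(\Sigma_g)$ with $\mathcal{M}(\Sigma_g)=\mathcal{M}(\Sigma_h)(w)$; its minimal polynomial $F$ is irreducible over $\mathcal{M}(\Sigma_h)$, giving $\mathcal{M}(\Sigma_g)\cong\mathcal{M}(\Sigma_h)(F(w))$, as claimed.

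The step I expect to be the main obstacle is the passage in the third paragraph: Theorem 2.6 as quoted is stated only for the base $\widehat{\mathbb{C}}=\mathbb{CP}^1$ with $\mathcal{M}(\widehat{\mathbb{C}})=\mathbb{C}(x)$, whereas the corollary needs the correspondence over a general $\Sigma_h$. I would need to justify the generalized Riemann existence and Galois correspondence over an arbitrary compact Riemann surface $Y$, equating Galois coverings of $Y$ branched over a finite set with Galois extensions of $\mathcal{M}(Y)$, and to make precise the sense in which $F$ is \emph{unique}: the primitive element $w$ is not canonical, so $F$ is determined only after a generator is fixed, while the extension it defines is unique up to $\mathcal{M}(\Sigma_h)$-isomorphism.
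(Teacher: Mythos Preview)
Your argument is exactly the intended one: the paper states this corollary without proof, regarding it as an immediate consequence of Theorem~\ref{main theorem 2}, the Harvey condition, and the covering/field-extension dictionary recorded in Section~2. Your three-step chain---numerical inequality $\Rightarrow$ signature $\Rightarrow$ surface-kernel epimorphism $\Rightarrow$ branched Galois cover with deck group $PSL_2(\mathbb{F}_{11})$ $\Rightarrow$ Galois extension of function fields $\Rightarrow$ primitive element---is precisely what the word ``corollary'' is meant to encode, and nothing more is supplied in the paper.

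You have also correctly isolated the one genuine soft spot, and it is one the paper itself does not address: the quoted correspondence (what the corollary calls ``Theorem~2.6'', which in the numbering of Section~2 is really the theorem giving the bijection between Galois extensions of $\mathbb{C}(x)$ and Galois coverings of $\mathbb{CP}^1$) is stated only over the base $\widehat{\mathbb{C}}$, whereas the corollary needs it over an arbitrary $\Sigma_h$. The generalization is standard---for any compact Riemann surface $Y$ one has the same equivalence between finite branched Galois covers of $Y$ and finite Galois extensions of $\mathcal{M}(Y)$---but you are right that it is not literally contained in the cited statement. Your remark about uniqueness of $F$ is likewise well taken: the extension $\mathcal{M}(\Sigma_g)/\mathcal{M}(\Sigma_h)$ is determined up to isomorphism, while the particular polynomial depends on the choice of primitive element; the paper's phrasing ``unique irreducible polynomial'' should be read in that sense.
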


\section{Graded Monoid}
\noindent Let $S_i ~ := ~ \lbrace (i;m_1,m_2,\dots,m_r)|i \geq 0 \rbrace$ be the set of signatures of $PSL_2(\mathbb{F}_p)$ when the orbifold genus is $i$. Here $p$ is either 7 or 11. Now we can define $\odot$ on $S_i$ as $$(i;m_1,m_2,\dots,m_r)~ \odot ~(i;n_1,n_2,\dots,n_s) ~ := ~ (i;m_1,m_2,\dots,m_r,n_1,n_2,\dots,n_s) ~ := ~ (i;k_1,k_2,\dots,k_{r+s})$$ where $k_i \geq k_j$ for $i \geq j$ and $k_i ~ \in ~ \lbrace m_i,n_j|i=1,2,\dots,r ~\text{and} ~ j=1,2,\dots, s\rbrace $. Now $(S_i,\odot)$ is a monoid where the free action on the surface $\Sigma_i$ is the identity element in $S_i$.\\
Now consider two signatures $(i;m_1,m_2,\dots,m_r)$ and $(j;n_1,n_2,\dots,n_s)$ of the group $PSL_2(\mathbb{F}_7)$ corresponding to the action on $\Sigma_g$ and $\Sigma_h$ respectively. Now find two disks $D_{g}$ and $D_{h}$ such that $aD_{g}$ and $aD_{h}$ are disjoint for all $a \in PSL_2(\mathbb{F}_7)$. Then take all such disks $aD_{g}$ and $aD_{h}$ from $\Sigma_g$ and $\Sigma_h$ respectively and perform connected sum operation by identifying $\partial(aD_{g})$ and $\partial(aD_h)$ for all $a \in ~ PSL_2(\mathbb{F}_7)$. So we have $\Sigma_g$ and $\Sigma_h$ are joined by $168$ tubes, and as a result we have the surface $\Sigma_{g+h+167}$. We can extend the action of the group $PSL_2(\mathbb{F}_7)$ on $\Sigma_{g+h+167}$ by restricting the action of $PSL_2(\mathbb{F}_7)$ on $\Sigma_g$, $\Sigma_h$ and permuting the tubes. So we get $(i+j;m_1,m_2,\dots,m_r,n_1,n_2,\dots,n_s)$ the resultant signature.\\
Now we define $$ \oplus : ~ S_i ~ \otimes ~ S_j ~ \rightarrow ~ S_{i+j} $$ as $(i;m_1,m_2,\dots,m_r)\oplus (j;n_1,n_2,\dots,n_s)= (i+j;m_1,m_2,\dots,m_r,n_1,n_2,\dots,n_s)=(i+j;k_1,k_2,\dots,k_{r+s})$ where $k_i \geq k_j$ for $i >j$. This shows that $\mathcal{S} ~ := ~ (S_{i_{\geq 0}})$ is a graded monoid. 
\section{Acknowledgment}
This research was supported/partially supported by the Council of Scientific \& Industrial Research. We are thankful to our colleagues and seniors Dr. Debangshu Mukherjee, Dr. Suratno Basu, Dr. Biswajit Ransingh, Dr. Manish Kumar Pandey, Dr. Kuldeep Saha, Dr. Nupur Patanker, Apeksha Sanghi, Prof. V. Kannan, Mr. Aditya tiwari, Suprokash Hazra, Mr. Raja. M. and Dr. Neeraj Dhanwani who provided expertise that greatly assisted the research.
We are also grateful to Mr. Kaustav Mukherjee for assistance with python techniques, and  Dr. Suratno Basu, Dr. Biswajit Ransingh, Dr. Manish Kumar Pandey who moderated this paper and in that line improved the manuscript significantly.
We have to express our appreciation to Dr. Suratno Basu, and Dr. Biswajit Ransingh for sharing their pearls of wisdom with us during this research.

\end{document}